%% If the option ``final'' is set, then no references are shown
\documentclass[final]{siamltex}
\usepackage{amsfonts,amsmath,amssymb}
\usepackage{mathrsfs,mathtools}
\usepackage{enumerate}
\usepackage{xspace} 
\usepackage{hyperref}
\usepackage{esint}
\usepackage{graphicx}
\DeclareMathAlphabet{\mathpzc}{OT1}{pzc}{m}{it}

%% ============== TO BE COMMENTED ===========

\newcommand{\R}{\mathbb{R}}

\newcommand{\Y}{\mathpzc{Y}}

\newcommand{\N}{\mathpzc{N}}

\newcommand{\C}{\mathcal{C}}

\newcommand{\V}{\mathbb{V}}

\newcommand{\Xcal}{\mathcal{X}}
\newcommand{\Ycal}{\mathcal{Y}}

\newcommand{\T}{\mathscr{T}}

\newcommand{\Ss}{\mathscr{S}}

\newcommand{\HL}{ \mbox{ \raisebox{7.0pt} {\tiny$\circ$} \kern-10.7pt} {H_L^1} }
\newcommand{\Wp}{ \mbox{ \raisebox{7.7pt} {\scriptsize$\circ$} \kern-10.1pt} {W^{1,p}} }
\newcommand{\Wpp}{ \mbox{ \raisebox{7.7pt} {\scriptsize$\circ$} \kern-10.1pt} {W^{1,p'}} }
\newcommand{\Sz}{ \mbox{ \raisebox{7.5pt} {\scriptsize$\circ$} \kern-10.1pt} {\Ss} }
\newcommand{\HLnew}{ \mbox{ \raisebox{7pt} {\scriptsize$\circ$} \kern-10.1pt}{H}^1_L }
\newcommand{\HLn}{{\mbox{\,\raisebox{5.1pt} {\tiny$\circ$} \kern-9.3pt}{H}^1_L  }}
\newcommand{\HLs}{{\mbox{\raisebox{8.7pt} {\scriptsize$\circ$} \kern-10.1pt}{H}^1_L  }}

\newcommand{\Tr}{\mathbb{T}}
\newcommand{\U}{\mathbb{U}}

\newcommand{\usf}{\mathsf{u}}
\newcommand{\asf}{\mathsf{a}}
\newcommand{\bsf}{\mathsf{b}}

\newcommand{\zsf}{\mathsf{z}}

\newcommand{\rsf}{\mathsf{r}}
\newcommand{\fsf}{\mathsf{f}}
\newcommand{\psf}{\mathsf{p}}
\newcommand{\gsf}{\mathsf{g}}

\newcommand{\orsf}{\bar{\mathsf{r}}}
\newcommand{\opsf}{\bar{\mathsf{p}}}
\newcommand{\ousf}{\bar{\mathsf{u}}}
\newcommand{\ozsf}{\bar{\mathsf{z}}}
\newcommand{\Zad}{\mathsf{Z}_{\textrm{ad}}}
\newcommand{\wsf}{\mathsf{w}}
\newcommand{\vsf}{\mathsf{v}}
\newcommand{\ue}{\mathscr{U}}
\newcommand{\oue}{\bar{\mathscr{U}}}
\newcommand{\pe}{\mathscr{P}}
\newcommand{\ope}{\bar{\mathscr{P}}}

\DeclareMathOperator*{\tr}{tr_\Omega}

\newcommand{\boxednumber}[1]{\expandafter\readdigit\the\numexpr#1\relax\relax}
\newcommand{\K}{\mathcal{K}}

\newcommand{\Hsd}{\mathbb{H}^{-s}(\Omega)}

%%%%%%%%%% AJS commands

\newcommand{\calLs}{\mathcal{L}^s}

\newcommand{\DIV}{\textrm{div}}
\newcommand{\diff}{\, \mbox{\rm d}}
\newcommand{\ie}{i.e.,\@\xspace}

\newcommand{\Hs}{\mathbb{H}^s(\Omega)}
\newcommand{\Ws}{\mathbb{H}^{1-s}(\Omega)}

\newcommand{\GL}{{\textup{\textsf{GL}}}}

\newcommand{\Nin}{\,{\mbox{\,\raisebox{6.0pt} {\tiny$\circ$} \kern-10.9pt}\N }}

\newcommand{\calT}{{\mathcal{T}}}

%% ===========================================

\newtheorem{remark}[theorem]{Remark}
\numberwithin{equation}{section}

\newcommand{\calL}{{\mathcal L}}
\DeclareMathOperator*{\esssup}{esssup}

\title{A fractional space-time optimal control problem: analysis and discretization\thanks{EO has been supported in part by NSF grants DMS-1109325 and DMS-1411808. AJS has been supported in part by NSF grant DMS-1418784.}}

\author{Harbir Antil\thanks{Department of Mathematical Sciences, George Mason University, Fairfax, VA 22030, USA. \texttt{hantil@gmu.edu}},
\and
Enrique Ot\'arola\thanks{Department of Mathematics, University of Maryland, College Park, MD 20742, USA and Department of Mathematical Sciences, George Mason University, Fairfax, VA 22030, USA. \texttt{kike@math.umd.edu}}
\and
Abner J.~Salgado\thanks{Department of Mathematics, University of Tennessee, Knoxville, TN 37996, USA. \texttt{asalgad1@utk.edu}}}

\pagestyle{myheadings}
\thispagestyle{plain}
\markboth{H.~Antil, E.~Ot\'arola, A.J.~Salgado}{A fractional space-time optimal control problem}

\date{Draft version of \today.}

\begin{document}

\maketitle
\begin{abstract}
We study a linear-quadratic optimal control problem involving a parabolic equation with fractional diffusion and Caputo fractional time derivative of orders $s \in (0,1)$ and $\gamma \in (0,1]$, respectively. The spatial fractional diffusion is realized as the Dirichlet-to-Neumann map for a nonuniformly elliptic operator. Thus, we consider an equivalent formulation with a quasi-stationary elliptic problem with a dynamic boundary condition as state equation. The rapid decay of the solution to this problem suggests a truncation that is suitable for numerical approximation. We consider a fully-discrete scheme: piecewise constant functions for the control and, for the state, first-degree tensor product finite elements in space and a finite difference discretization in time. We show convergence of this scheme and, for $s \in (0,1)$ and $\gamma = 1$, we derive a priori error estimates.
\end{abstract}

\begin{keywords}
linear-quadratic optimal control problem, fractional derivatives and integrals, fractional diffusion, weighted Sobolev spaces, finite elements, stability, fully-discrete methods.
\end{keywords}
\begin{AMS}
26A33,    %%   Fractional derivatives and integrals
35J70,    %%   Degenerate elliptic equations
49J20,    %%   Optimal control problems involving partial differential equations
49M25,    %%   Discrete approximations
65M12,    %%   Stability and convergence of numerical methods
65M15,    %%   Error bounds
65M60,    %%   Finite elements, Rayleigh-Ritz and Galerkin methods, finite methods
65R10.    %%   Integral transforms
\end{AMS}
%
%%%%%%%%%%%%%%%%%%%%%%%%%%%%%%%%%%%%%%%%%%%%%%%%%%%%%%%%%%%%%%%%%%%%%%%%%%%%%%%%%%%%%%
\section{Introduction}
\label{sec:introduccion}
%%%%%%%%%%%%%%%%%%%%%%%%%%%%%%%%%%%%%%%%%%%%%%%%%%%%%%%%%%%%%%%%%%%%%%%%%%%%%%%%%%%%%%
We are interested in the design and analysis of efficient solution techniques for a linear-quadratic optimal control problem involving an initial boundary value problem for a space-time fractional parabolic equation. Let $\Omega$ be an open and bounded domain in $\R^n$ ($n\ge1$), with boundary $\partial\Omega$.
Given $s \in (0,1)$, $\gamma \in (0,1]$, and a desired state $\usf_d: \Omega \times (0,T) \rightarrow \mathbb{R}$,
% and $\usf_T: \Omega \rightarrow \mathbb{R}$, 
we define
\[
 J(\usf,\zsf)= \frac{1}{2}\int_{0}^T \left( \| \usf - \usf_{d} \|^2_{L^2(\Omega)} + \mu \| \zsf\|^2_{L^2(\Omega)} \right) \diff t, 
%  + \frac{\mu_2}{2} \| \usf(T) - \usf_{T} \|^2_{L^2(\Omega)},
\]
where $\mu > 0$ is the so-called regularization parameter.
% and $\mu_1$ and $\mu_2$ denotes a real and positive constant.
Let $\fsf:\Omega \times (0,T) \rightarrow \mathbb{R}$ and $\usf_0:\Omega \to \mathbb{R}$ be fixed functions. 
We will call them the right hand side and initial datum, respectively. We shall be concerned with the following optimal control problem: Find 
\begin{equation}
\label{Jintro}
  \min J(\usf,\zsf),
\end{equation}
subject to the \emph{space-time fractional state equation}
\begin{equation}
\label{fractional_heat}
\partial^{\gamma}_t \usf + \mathcal{L}^s \usf = \fsf + \zsf  \text{ in } \Omega \times (0,T), \qquad \usf(0) = \usf_0 \text{ in } \Omega,
%     \usf = 0, &  \text{ on } \partial \Omega,~t \in (0,T). \\
\end{equation}
and the \emph{control constraints}
\begin{equation}
\label{cc}
\asf(x',t) \leq \zsf(x',t) \leq \bsf(x',t) \quad\textrm{a.e.~~} (x',t) \in Q:= \Omega \times (0,T).
\end{equation}
The functions $\asf$ and $\bsf$ both belong to $L^2(Q)$ 
and satisfy the property $\asf(x',t) \leq \bsf	(x',t)$ for almost every $(x',t) \in Q$. The operator $\calLs$, with $s \in (0,1)$, is the fractional power of the second order elliptic operator 
\begin{equation}
\label{second_order}
 \mathcal{L} w = - \DIV_{x'} (A \nabla_{x'} w ) + c w \text{ in } \Omega, \qquad w = 0 \text{ on } \partial \Omega,
\end{equation}
where $0 \leq c \in L^\infty(\Omega)$ and $A \in C^{0,1}(\Omega,\GL(n,\R))$ is symmetric and positive definite.

The fractional derivative in time $\partial^{\gamma}_t$ for $\gamma \in (0,1)$ is understood as \textit{the left-sided Caputo fractional derivative of order $\gamma$} with respect to $t$, which is formally defined by
\begin{equation}
\label{caputo}
\partial^{\gamma}_t \usf(x',t) = \frac{1}{\Gamma(1-\gamma)} \int_{0}^t \frac{1}{(t-r)^{\gamma}} 
\frac{\partial \usf(x',r)}{\partial r}
\diff r,
\end{equation}
where $\Gamma$ is the Gamma function. For $\gamma = 1$, we consider the usual derivative $\partial_t$.

For convenience, we will refer to the optimal control problem \eqref{Jintro}--\eqref{cc} as the \emph{space-time fractional optimal control problem}; see section~\ref{sec:control} for its precise description and analysis. One of the main difficulties in the study of the state equation \eqref{fractional_heat} is the nonlocality of the fractional time derivative and the fractional space operator (see \cite{CS:07,CDDS:11,fractional_book,Samko,ST:10}). A possible approach to overcome the nonlocality in space is given by the result of Caffarelli and Silvestre in $\mathbb{R}^n$ \cite{CS:07} and its extensions to bounded domains \cite{CDDS:11,ST:10}: Fractional powers of the spatial operator $\mathcal{L}$ can be realized as an operator that maps a Dirichlet boundary condition to a Neumann condition via an extension problem on the semi-infinite cylinder $\C = \Omega \times (0,\infty)$. Therefore, we shall use the Caffarelli-Silvestre extension to rewrite the fractional space-time state equation \eqref{fractional_heat}
as a quasi-stationary elliptic problem with a dynamic boundary condition:
\begin{equation}
\label{heat_alpha_extension}
\begin{dcases}
-\DIV \left( y^{\alpha} \mathbf{A} \nabla \ue \right) + y^{\alpha} c\ue = 0 \textrm{ in } \C \times(0,T), & \ue = 0 \textrm{ on }\partial_L \C  \times (0,T),\\
\partial_t^{\gamma} \ue + \tfrac{1}{d_s}\partial_{\nu}^{\alpha} \ue  = \fsf + \zsf \textrm{ on } (\Omega \times \{ 0\}) \times (0,T),  
& \ue = \usf_0 \textrm{ on } \Omega \times \{ 0\}, ~t=0,
\end{dcases}
\end{equation}
where $\partial_L \C= \partial \Omega \times [0,\infty)$ is
the lateral boundary of $\C$, $\alpha =1-2s \in (-1,1)$, $d_s=2^\alpha \Gamma(1-s)/\Gamma(s)$ and the conormal exterior derivative of $\ue$ at $\Omega \times \{ 0 \}$ is
\begin{equation}
\label{def:lf}
\partial_{\nu}^{\alpha} \ue = -\lim_{y \rightarrow 0^+} y^\alpha \ue_y,
\end{equation}
where the limit must be understood in the distributional sense \cite{CS:07,CDDS:11,ST:10}. Finally, $\mathbf{A}(x',y) =  \textrm{diag} \{A(x'),1\}  \in C^{0,1}(\C,\GL(n+1,\R))$. We will call $y$ the \emph{extended variable} and the dimension $n+1$ in $\R_+^{n+1}$ the \emph{extended dimension} of problem \eqref{heat_alpha_extension}.
As noted in \cite{CS:07,CDDS:11,ST:10}, $\mathcal{L}^s$ and the Dirichlet-to-Neumann operator of \eqref{heat_alpha_extension} are related by
\[
 d_s \mathcal{L}^s \usf = \partial_{\nu}^{\alpha} \ue \quad \text{in } (\Omega \times \{ 0\}) \times (0,T).
\]
We briefly elaborate on these ideas in \S\ref{sub:stateequation}. A rigorous analysis is provided in \cite{NOS3,NOS}.

The study of solution techniques for elliptic and parabolic problems involving fractional derivatives is a relatively new but rapidly growing area of research. 
We refer the reader to \cite{NOS3,NOS,Otarola} for an overview of the state of the art. Numerical strategies for solving a discrete optimal control problem
with PDE constraints have been widely studied in the literature;  see \cite{HPUU:09,HT:10,IK:08,MV:08} for an extensive list of references. Mainly, these references are concerned with control problems governed by elliptic and parabolic PDEs, both linear and semilinear. The common feature here is that, in contrast to \eqref{Jintro}--\eqref{cc}, the state equation is local.

The numerical analysis of optimal control problems involving evolution equations with fractional diffusion and fractional time derivative is still at its infancy. 
To the best of our knowledge, the first work that provides a comprehensive treatment of an optimal control problem involving fractional elliptic operators in space is \cite{AO}. Concerning fractional derivatives in time, the first work that attempts to study an optimization problem constrained by a fractional order ODE is \cite{Agrawal:04} where, through completely formal calculations, the author derives optimality conditions and a numerical scheme. However, no justification is provided for either the optimality conditions nor the numerical scheme. Later, similar optimization problems have been discretized via a finite element method \cite{Agrawal:08}, a modified Gr\"unwald-Letnikov approach \cite{BDA:09,Defterli:10} and a rational approximation approach \cite{TC:10}. However, fundamental mathematical results such as stability and convergence of the proposed numerical schemes are missing in these works. Recently, convergence of spectral based techniques has been explored in 
\cite{LDY:11,LYD:13} for an optimization problem restricted to fractional order ODEs. Optimal control problems for one dimensional evolution equations with only fractional time derivatives  have been recently studied in \cite{YeXu:13,YeXu:14}. In these references, the authors derive rigorously first order necessary optimality conditions,
propose numerical schemes based on spectral methods and obtain a priori error estimates. These error estimates, however, are derived using regularity assumptions that are verified only in very restricted cases \cite{M:10,NOS3}.

We provide a comprehensive treatment of a linear-quadratic optimal control problem involving evolution equations with fractional diffusion and fractional time derivative: $s \in (0,1)$ and $\gamma \in (0,1]$. To the best of our knowledge this is the first work addressing such a problem from a mathematical point of view. We rigorously derive optimality conditions, present a numerical scheme and prove its convergence. In addition, for $s \in (0,1)$ and $\gamma = 1$, we derive a priori error estimates. We overcome the nonlocality of $\mathcal{L}^s$ by using the results of Caffarelli and Silvestre \cite{CS:07}. We realize the state equation \eqref{fractional_heat} by \eqref{heat_alpha_extension} so that, our problem can be equivalently written as: Minimize $J$ subject to the \emph{extended state equation} \eqref{heat_alpha_extension} and the control constraints \eqref{cc}.

Inspired by \cite{AO,NOS3,NOS}, we propose a simple strategy to find the solution to the space-time fractional optimal control problem \eqref{Jintro}--\eqref{cc}: given $\fsf$ and $\usf_d$, we realize \eqref{fractional_heat} by \eqref{heat_alpha_extension} and apply standard techniques to solve this problem. We thus obtain an optimal control $\bar{\zsf}: \Omega \times (0,T) \rightarrow \R$ and an optimal state $\bar{\ue}: \C \times (0,T) \rightarrow \R$. Letting $\bar{\usf}: \Omega \times (0,T) \ni (x',t) \mapsto \bar{\ue}(x',0,t) \in \R$ we obtain $(\bar{\usf},\bar{\zsf})$ that solves \eqref{Jintro}--\eqref{cc}.

The outline of this paper is as follows. In section~\ref{sec:Prelim} we introduce notation, recall elements from fractional calculus, define fractional powers of elliptic operators via spectral theory and show the equivalence with the Caffarelli-Silvestre extension. This allows us to study \eqref{heat_alpha_extension} and provide some energy estimates. On the basis of this, in section~\ref{sec:control}, we study the \emph{space-time fractional optimal control problem}. We derive existence and uniqueness results together with first order sufficient and necessary optimality conditions. In \S\ref{sec:control_truncated}, we begin the numerical analysis of our problem. We introduce a truncation of the state equation and derive approximation properties of its solution. In section~\ref{sec:a_priori_state}, we recall the fully discrete scheme of \cite{NOS3} that approximates the solution to the state equation \eqref{fractional_heat}. For $s\in (0,1)$ and $\gamma = 1$, we derive a novel $L^2(Q)$-error estimate in \S\ref{sec:fully_scheme_1}. Subsection~\ref{sub:fd_control} is devoted to the design of a numerical scheme to approximate the control problem \eqref{Jintro}--\eqref{cc}, and in \S\ref{sub:apriori_control}, we derive a priori error estimates for $s \in (0,1)$ and $\gamma = 1$. The convergence of the scheme is analyzed in \S\ref{sub:convergence} for $s \in (0,1)$ and $\gamma \in (0,1]$. Finally, section~\ref{s:numerics} presents numerical experiments that illustrate the theory developed in \S\ref{sub:apriori_control}.

%%%%%%%%%%%%%%%%%%%%%%%%%%%%%%%%%%%%%%%%%%%%%%%%%%%%%%%%%%%%%%%%%%%%%%%%%%%%%%%%%%%%%%
\section{Notation and preliminaries}
\label{sec:Prelim}
%%%%%%%%%%%%%%%%%%%%%%%%%%%%%%%%%%%%%%%%%%%%%%%%%%%%%%%%%%%%%%%%%%%%%%%%%%%%%%%%%%%%%%
Let us set notation and recall some facts that will be useful later.
%%%%%%%%%%%%%%%%%%%%%%%%%%%%%%%%%%%%%%%%%%%%%%%%%%%%%%%%%%%%%%%%%%%%%%%%%%%%%%%%%%%%%%
\subsection{Notation}
\label{sub:notation}
%%%%%%%%%%%%%%%%%%%%%%%%%%%%%%%%%%%%%%%%%%%%%%%%%%%%%%%%%%%%%%%%%%%%%%%%%%%%%%%%%%%%%%

Throughout this work $\Omega$ is an open, bounded and connected subset of $\R^n$, $n\geq1$, with polyhedral boundary $\partial\Omega$. If $T >0$ is a fixed time, we set $Q = \Omega \times (0,T)$. We will follow the notation of \cite{NOS3,NOS} and define the semi-infinite cylinder with base $\Omega$ and its lateral boundary, respectively, by $\C = \Omega \times (0,\infty)$ and $\partial_L \C  = \partial \Omega \times [0,\infty)$. For $\Y>0$, we define the truncated cylinder
$
  \C_\Y = \Omega \times (0,\Y)
$
and $\partial_L\C_\Y$ accordingly. Since we will be dealing with objects defined on $\R^n$ and $\R^{n+1}$, it will be convenient to distinguish the extended $n+1$-dimension. If $x\in \R^{n+1}$, we write
$\
  x =  (x',y),
$
with $x' \in \R^n$ and $y\in\R$.

If $\Xcal$ and $\Ycal$ are normed spaces, $\Xcal \hookrightarrow \Ycal$ means that $\Xcal$ is continuously embedded in $\Ycal$. We denote by $\Xcal'$ and $\|\cdot\|_{\Xcal}$ the dual and norm of $\Xcal$, respectively. The relation $a \lesssim b$ indicates that $a \leq Cb$, with a nonessential constant $C$ that might change at each occurrence. 

If $D\subset \R^{N}$ is open, $N \geq 1$, and $\phi: D \times (0,T) \to \R$, we will regard $\phi$ as a function of $t$ with values in a Banach space $\Xcal$, \ie
$
 \phi:(0,T) \ni t \mapsto  \phi(t) \equiv \phi(\cdot,t) \in \Xcal.
$
For $1 \leq p \leq \infty$, $L^p( 0,T; \Xcal )$ is the space of $\Xcal$-valued functions whose $\Xcal$-norm is in $L^p(0,T)$. This is a Banach space for the norm
\[
  \| \phi \|_{L^p( 0,T;\Xcal)} = \left( \int_0^T \| \phi(t) \|^p_\Xcal \diff t\right)^{\hspace{-0.1cm}\tfrac{1}{p}} 
  , \quad 1 \leq p < \infty, \quad
  \| \phi \|_{L^\infty( 0,T;\Xcal)} = \esssup_{t \in (0,T)} \| \phi(t) \|_\Xcal.
\]

%%%%%%%%%%%%%%%%%%%%%%%%%%%%%%%%%%%%%%%%%%%%%%%%%%%%%%%%%%%%%%%%%%%%%%%%%%%%%%%%%%%%%%
\subsection{Fractional derivatives and integrals}
\label{sub:fractional_integral}
%%%%%%%%%%%%%%%%%%%%%%%%%%%%%%%%%%%%%%%%%%%%%%%%%%%%%%%%%%%%%%%%%%%%%%%%%%%%%%%%%%%%%%

The left Caputo fractional derivative is defined in \eqref{caputo}. The \emph{right-sided Caputo fractional derivative} is \cite{fractional_book,Samko}:
\begin{equation}
\label{caputoR}
\partial^{\gamma}_{T-t} g(t) = - \frac{1}{\Gamma(1-\gamma)} \int_{t}^T \frac{g'(\xi)}{(\xi-t)^{\gamma}} 
 \diff \xi, \qquad \gamma \in (0,1).
\end{equation}
% We now proceed to define the so called \emph{fractional integrals} \cite{fractional_book,Podlubny,Samko}.  
For $g \in L^1(0,T)$ and $\sigma > 0$, the \emph{left} and \emph{right Riemann-Liouville fractional integrals} of order $\sigma$ are, respectively, \cite[Definition 2.1, \S2]{Samko}
\begin{equation}
\label{fractional_integral}
(I_{t}^{\sigma} g)(t) = \frac{1}{\Gamma(\sigma)} \int_{0}^t \frac{g(\xi)}{(t-\xi)^{1-\sigma}} \diff \xi,
\qquad
(I_{T-t}^{\sigma} g)(t) = \frac{1}{\Gamma(\sigma)} \int_{t}^T \frac{g(\xi)}{(\xi-t)^{1-\sigma}} \diff \xi.
\end{equation}
\cite[\S2.2--2.3]{Samko} provides a motivation for these definitions inspired by the Abel equation.

\begin{proposition}[continuity of fractional integrals]
\label{pro:continuity}
For $\sigma > 0$ and $1 \leq p \leq \infty$, then 
% the left and right-sided Riemann-Liouville fractional integrals 
$I_{t}^{\sigma}$ and $I_{T-t}^{\sigma}$ are continuous from $L^p(0,T)$ into itself and
\[
  \|I_t^{\sigma} g\|_{L^p(0,T)}\le \frac{T^\sigma}{\Gamma(\sigma+1)} \|g\|_{L^p(0,T)},
    \qquad 
  \|I_{T-t}^{\sigma} g\|_{L^p(0,T)}\le \frac{T^\sigma}{\Gamma(\sigma+1)} \|g\|_{L^p(0,T)}. 
\]
for all $g \in L^p(0,T)$. These maps also are continuous from $C([0,T])$ into itself.
\end{proposition}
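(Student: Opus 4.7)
The plan is to reduce both assertions to a single convolution estimate. I will recognize that, after extending $g$ by zero outside $[0,T]$,
\[
(I_t^{\sigma} g)(t) = (K_\sigma \ast g)(t), \qquad K_\sigma(t) = \frac{t^{\sigma-1}}{\Gamma(\sigma)}\mathbf{1}_{(0,T)}(t),
\]
so that the $L^p$ bound falls out of Young's convolution inequality $\|K_\sigma \ast g\|_{L^p(\R)} \le \|K_\sigma\|_{L^1(\R)}\|g\|_{L^p(\R)}$, combined with the direct computation
\[
\|K_\sigma\|_{L^1(0,T)} = \frac{1}{\Gamma(\sigma)}\int_0^T t^{\sigma-1}\diff t = \frac{T^{\sigma}}{\sigma\Gamma(\sigma)} = \frac{T^{\sigma}}{\Gamma(\sigma+1)}.
\]
The case of $I_{T-t}^{\sigma}$ reduces to the previous one by the change of variables $t\mapsto T-t$, which is an isometry of $L^p(0,T)$ and maps $I_t^\sigma$ into $I_{T-t}^\sigma$.

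For the $C([0,T])$ statement, I would first estimate pointwise
\[
|(I_t^\sigma g)(t)| \le \frac{\|g\|_{C([0,T])}}{\Gamma(\sigma)} \int_0^t (t-\xi)^{\sigma-1}\diff\xi = \frac{t^\sigma}{\Gamma(\sigma+1)}\|g\|_{C([0,T])} \le \frac{T^\sigma}{\Gamma(\sigma+1)}\|g\|_{C([0,T])},
\]
which simultaneously yields the operator norm bound and shows that $I_t^\sigma g$ is well defined. Continuity in $t$ then follows by splitting, for $0\le t_1<t_2\le T$,
\[
(I_t^\sigma g)(t_2)-(I_t^\sigma g)(t_1) = \frac{1}{\Gamma(\sigma)}\!\left[\int_0^{t_1}\!g(\xi)\bigl((t_2-\xi)^{\sigma-1}-(t_1-\xi)^{\sigma-1}\bigr)\diff\xi + \int_{t_1}^{t_2}\!g(\xi)(t_2-\xi)^{\sigma-1}\diff\xi\right]\!,
\]
then applying dominated convergence (resp.\ direct estimation) as $t_2\downarrow t_1$; the second integral is bounded by $\|g\|_{C([0,T])}(t_2-t_1)^\sigma/\Gamma(\sigma+1)$, and for the first integral, monotonicity of $\xi\mapsto (t-\xi)^{\sigma-1}$ provides an integrable majorant uniformly in $t_2$ close to $t_1$ when $\sigma<1$; the case $\sigma\ge 1$ is even easier since then the kernel is continuous.

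The only mild subtlety is the case $\sigma\in(0,1)$, where the kernel $K_\sigma$ is singular at the origin and so the pointwise continuity argument requires splitting off a small interval near the upper endpoint, but since $K_\sigma\in L^1(0,T)$ the singularity is integrable and the argument goes through. The right-sided case $I_{T-t}^\sigma$ is handled verbatim by reflection $t\mapsto T-t$, which preserves both $L^p(0,T)$ and $C([0,T])$ norms.
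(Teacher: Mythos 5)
Your proof is correct, and it is essentially a self-contained version of what the paper delegates to the literature: for the $L^p$ bound the paper simply cites \cite[Theorem 2.6, \S2]{Samko}, whose proof rests on the generalized Minkowski inequality for the convolution kernel $t^{\sigma-1}/\Gamma(\sigma)$ --- the same mechanism as your Young's-inequality argument after zero extension, and your computation $\|K_\sigma\|_{L^1(0,T)}=T^\sigma/\Gamma(\sigma+1)$ reproduces the stated constant exactly; your reflection $t\mapsto T-t$ for $I_{T-t}^\sigma$ is also the standard reduction. For the $C([0,T])$ statement the paper argues via continuity in $L^\infty$ together with the observation that the fractional integral of a (uniformly) continuous $g$ is continuous, whereas you prove continuity of $I_t^\sigma g$ directly by splitting the increment at $t_1$, estimating the near-endpoint piece by $\|g\|_{C([0,T])}(t_2-t_1)^\sigma/\Gamma(\sigma+1)$, and using dominated convergence (with the monotone majorant $2\|g\|_{C([0,T])}(t_1-\xi)^{\sigma-1}$ when $\sigma<1$) on the remaining piece; one could even avoid dominated convergence by computing $\int_0^{t_1}\bigl((t_1-\xi)^{\sigma-1}-(t_2-\xi)^{\sigma-1}\bigr)\diff\xi = \bigl(t_1^\sigma-t_2^\sigma+(t_2-t_1)^\sigma\bigr)/\sigma$, but your argument is sound as written, including the treatment of $\sigma\ge1$ and of the right-sided integral by reflection. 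In short: same underlying estimates, with your version trading the paper's brevity-by-citation for a complete elementary proof.
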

\begin{proof}
For the proof of continuity in $L^p(0,T)$ see \cite[Theorem 2.6, \S2]{Samko}.
To obtain the continuity in $C([0,T])$ we use the continuity in $L^\infty(0,T)$, 
together with the fact that if $g \in C([0,T])$ then its fractional integrals are continuous as well. 
This can be easily shown by recalling that $g$ is also uniformly continuous.
\end{proof}

We also define the \emph{left} and \emph{right Riemann-Liouville fractional derivatives} of order $\gamma \in (0,1)$, respectively, by \cite[Definition 2.2, \S2.3]{Samko}
\[
  D_{t}^{\gamma} g(t) = \frac{1}{\Gamma(1-\gamma)} \frac{\diff}{\diff t}\int_{0}^t \frac{g(\xi)}{(t-\xi)^{\gamma}} \diff \xi,
  \ 
  D_{T-t}^{\gamma} g(t) = \frac{-1}{\Gamma(1-\gamma)} \frac{\diff}{\diff t} \int_{t}^T \frac{g(\xi)}{(\xi-t)^{\gamma}} \diff \xi.
\]
A relation between the Caputo and Riemann-Liouville derivatives is given below. 

\begin{lemma}[relation between fractional derivatives]
Let $\gamma \in (0,1)$ and $g \in W^1_1(0,T)$, then $D_t^{\gamma} g$ and $D_{T-t}^{\gamma}g$ exist almost everywhere on $[0,T]$. In addition,
$
D_t^{\gamma} g, D_{T-t}^{\gamma}g \in L^r(0,T)
$
for $1 \leq r < \tfrac{1}{\gamma}$, and
\begin{equation}
\label{relations}
 D_t^{\gamma} g(t) = \partial_t^{\gamma} g(t) + \frac{1}{\Gamma(1-\gamma)} \frac{g(0)}{t^{\gamma}},
 \quad 
 D_{T-t}^{\gamma} g(t) = \partial_{T-t}^{\gamma} g(t) + \frac{1}{\Gamma(1-\gamma)} \frac{g(T)}{(T-t)^{\gamma}}. 
\end{equation}
\end{lemma}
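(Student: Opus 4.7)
The plan is to establish the identities \eqref{relations} by a direct absolutely-continuous decomposition of $g$ and then read off existence a.e.\ and $L^r$-integrability as corollaries.

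For the left-sided identity, since $g \in W^1_1(0,T)$ I may take the absolutely continuous representative and write $g(\xi) = g(0) + \int_0^\xi g'(\tau)\,d\tau$. Substituting into $\int_0^t g(\xi)(t-\xi)^{-\gamma}\,d\xi$ splits it into an explicit piece $g(0)\,t^{1-\gamma}/(1-\gamma)$ and a double integral. Since $g' \in L^1$ and $(t-\xi)^{-\gamma}$ has an integrable singularity, Fubini's theorem applies on the triangle $\{0 < \tau < \xi < t\}$ and swapping the order together with $\int_\tau^t(t-\xi)^{-\gamma}\,d\xi = (t-\tau)^{1-\gamma}/(1-\gamma)$ reduces the RL integral to
\[
\frac{g(0)\,t^{1-\gamma}}{1-\gamma} \;+\; \frac{1}{1-\gamma}\int_0^t g'(\tau)(t-\tau)^{1-\gamma}\,d\tau.
\]
Differentiating in $t$ is the delicate step: the integrand $g'(\tau)(t-\tau)^{1-\gamma}$ vanishes at $\tau = t$ because $1-\gamma>0$, so no boundary term survives and differentiation under the integral sign yields $\int_0^t g'(\tau)(t-\tau)^{-\gamma}\,d\tau$. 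Dividing by $\Gamma(1-\gamma)$ gives $D_t^\gamma g(t) = \partial_t^\gamma g(t) + g(0)/(\Gamma(1-\gamma)\,t^\gamma)$, as claimed. The right-sided identity is obtained by the symmetric argument using $g(\xi) = g(T) - \int_\xi^T g'(\tau)\,d\tau$ and tracking the signs introduced by the minus in \eqref{caputoR}.

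For existence a.e.\ and $L^r$-integrability, I would note that $\partial_t^\gamma g = I_t^{1-\gamma} g'$ with $g'\in L^1(0,T)$, so Proposition~\ref{pro:continuity} (applied with $\sigma = 1-\gamma$ and $p=1$) gives $\partial_t^\gamma g \in L^1(0,T)$, hence it is finite almost everywhere. To sharpen integrability I would view $I_t^{1-\gamma} g' = k \ast g'$ with kernel $k(t) = t^{-\gamma}/\Gamma(1-\gamma)$, observe that $k \in L^r(0,T)$ precisely when $r\gamma < 1$, and apply Young's convolution inequality with exponents $1$ and $r$ to conclude $\partial_t^\gamma g \in L^r(0,T)$. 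The residual singular term $g(0)\,t^{-\gamma}/\Gamma(1-\gamma)$ is likewise in $L^r$ for the same range, so the identity \eqref{relations} shows $D_t^\gamma g \in L^r(0,T)$ for $1\le r < 1/\gamma$, and similarly for the right-sided counterpart.

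The main obstacle I anticipate is the differentiation step: one has to justify that the endpoint contribution at $\tau = t$ truly vanishes and that differentiation under the integral is legitimate given only $g' \in L^1$. Once this is handled via the factor $(t-\tau)^{1-\gamma}$ and a standard density/approximation argument (first for $g \in C^1([0,T])$, then extending by continuity of $I_t^\sigma$ on $L^1$ from Proposition~\ref{pro:continuity}), the remainder of the proof is bookkeeping.
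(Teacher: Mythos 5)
Your argument is correct, but it is worth noting that the paper does not prove this lemma at all: it simply cites \cite[Lemma 2.2, \S2.3]{Samko}, so your proposal supplies a self-contained proof that is, in substance, the classical one from that reference. The decomposition $g(\xi)=g(0)+\int_0^\xi g'(\tau)\,\diff\tau$, the Fubini step on the triangle, and the Young-inequality argument for the $L^r$ range $1\le r<1/\gamma$ (including the extra singular term $g(0)t^{-\gamma}$, which lies in $L^r$ exactly when $r\gamma<1$) are all sound; the right-sided case is indeed just sign bookkeeping. The one step you flag as delicate can be handled more cleanly than by differentiation under the integral sign plus density: observe that your Fubini computation says
\begin{equation*}
\frac{1}{\Gamma(1-\gamma)}\int_0^t \frac{g(\xi)}{(t-\xi)^{\gamma}}\,\diff\xi
=\frac{g(0)\,t^{1-\gamma}}{\Gamma(2-\gamma)}+(I_t^{2-\gamma}g')(t),
\end{equation*}
and that by the semigroup property $I_t^{2-\gamma}g'=I_t^{1}\bigl(I_t^{1-\gamma}g'\bigr)$ is an indefinite integral of the $L^1$ function $I_t^{1-\gamma}g'=\partial_t^\gamma g$, hence absolutely continuous with a.e.\ derivative $\partial_t^\gamma g$. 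Differentiating the displayed identity then gives, in one stroke, the a.e.\ existence of $D_t^\gamma g$, the identity \eqref{relations}, and (with your Young-inequality observation) the $L^r$ bound, with no approximation argument needed. If you do prefer the density route, it works, but you should say explicitly that $W^1_1(0,T)\hookrightarrow C([0,T])$ (so $g_n(0)\to g(0)$) and that the limit of the derivatives is taken in the distributional sense to identify it with $D_t^\gamma g$; as written, "differentiation under the integral sign" for $g'\in L^1$ is the only point a referee would press on.
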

\begin{proof}
See \cite[Lemma 2.2, \S2.3]{Samko}.
\end{proof}

We now derive an integration by parts formula for Caputo derivatives that will be fundamental in our analysis. For $\gamma \in (0,1)$ we define
\[
  \mathbb{L}_\gamma = \{ f \in C([0,T]): \partial_t^{\gamma} f \in L^2(0,T)\},
  \quad
  \mathbb{R}_\gamma = \{ g \in C([0,T]): \partial_{T-t}^{\gamma} g \in L^2(0,T)\}.
\]
% Then, we have the following integration by parts formula.

\begin{lemma}[fractional integration by parts formula]
\label{lem:frac_by_parts}
If $f \in \mathbb{L}_\gamma$ and $g \in \mathbb{R}_\gamma$, then the following fractional integration by parts holds:
\begin{equation}\label{fractional_by_parts}
 \int_{0}^{T} \partial_{t}^{\gamma} f(t) g(t) \diff t + f(0) ( I_{T-t}^{1-\gamma}g ) (0) =  
 \int_{0}^{T}  f(t) \partial_{T-t}^{\gamma} g(t) \diff t + g(T) ( I_{t}^{1-\gamma}f ) (T).
\end{equation}
\end{lemma}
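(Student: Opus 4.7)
My plan is to derive \eqref{fractional_by_parts} by combining Fubini's theorem with classical integration by parts, using the Caputo--Riemann--Liouville relation \eqref{relations} to identify the correct boundary contributions. For sufficiently smooth $f$ and $g$, I would begin from the definition \eqref{caputo} of $\partial_t^\gamma f$ and apply Fubini's theorem to obtain
\[
  \int_0^T \partial_t^\gamma f(t)\, g(t) \diff t \;=\; \frac{1}{\Gamma(1-\gamma)}\int_0^T f'(r) \int_r^T \frac{g(t)}{(t-r)^\gamma} \diff t \diff r \;=\; \int_0^T f'(r)\,\bigl(I_{T-t}^{1-\gamma} g\bigr)(r) \diff r,
\]
where the last equality is just the definition \eqref{fractional_integral} of the right Riemann--Liouville integral.

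Next I would integrate by parts classically in $r$. The boundary term at $r=T$ vanishes because $(I_{T-t}^{1-\gamma} g)(T)=0$, while the boundary term at $r=0$ produces $-f(0)\,(I_{T-t}^{1-\gamma} g)(0)$. For the remaining integral, the identity $\tfrac{d}{dr}(I_{T-t}^{1-\gamma} g)(r) = -D_{T-t}^\gamma g(r)$ together with the second relation in \eqref{relations} splits it into
\[
  \int_0^T f(t)\, \partial_{T-t}^\gamma g(t) \diff t \;+\; \frac{g(T)}{\Gamma(1-\gamma)}\int_0^T \frac{f(t)}{(T-t)^\gamma} \diff t.
\]
The last quantity is precisely $g(T)\,(I_t^{1-\gamma} f)(T)$ by \eqref{fractional_integral}, so collecting terms and rearranging gives \eqref{fractional_by_parts}.

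The main obstacle is regularity: the spaces $\mathbb{L}_\gamma$ and $\mathbb{R}_\gamma$ only demand continuity of $f,g$ together with $L^2$-integrability of the Caputo derivatives, so neither $f$ nor $g$ is assumed weakly differentiable and the manipulations above cannot be performed term by term. I would therefore close the argument by density, approximating $f\in\mathbb{L}_\gamma$ and $g\in\mathbb{R}_\gamma$ by smooth functions $f_n,g_n$ for which the calculation is legitimate, and then passing to the limit. Convergence of the bulk integrals is controlled by the $L^2$-convergence of $\partial_t^\gamma f_n$ and $\partial_{T-t}^\gamma g_n$ combined with Proposition~\ref{pro:continuity}, while convergence of the boundary terms $f_n(0),\,g_n(T),\,(I_{T-t}^{1-\gamma}g_n)(0),\,(I_t^{1-\gamma}f_n)(T)$ follows from the uniform continuity of the fractional integrals of continuous functions (again Proposition~\ref{pro:continuity}) and the built-in continuity in the definitions of $\mathbb{L}_\gamma$ and $\mathbb{R}_\gamma$.
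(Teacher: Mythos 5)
Your proof is correct, and it reaches the smooth-case identity by a more self-contained route than the paper. The paper's proof quotes the Riemann--Liouville integration-by-parts formula $\int_0^T D_t^\gamma f\, g \diff t = \int_0^T f\, D_{T-t}^\gamma g \diff t$ from Samko et al.\ and then converts both Riemann--Liouville derivatives to Caputo ones via \eqref{relations}, reading off the two boundary terms $f(0)(I_{T-t}^{1-\gamma}g)(0)$ and $g(T)(I_t^{1-\gamma}f)(T)$ from \eqref{fractional_integral}. You instead start from the definition \eqref{caputo}, apply Fubini to write $\int_0^T \partial_t^\gamma f\, g \diff t = \int_0^T f'(r)\,(I_{T-t}^{1-\gamma}g)(r)\diff r$, integrate by parts classically (the vanishing of $(I_{T-t}^{1-\gamma}g)(T)$ kills one boundary term), use $\tfrac{\diff}{\diff r}(I_{T-t}^{1-\gamma}g)(r)=-D_{T-t}^\gamma g(r)$, and invoke \eqref{relations} only for $g$; in effect you re-derive the cited Samko identity rather than assuming it. What your version buys is independence from the external reference and a transparent accounting of where each boundary term comes from; what the paper's version buys is brevity, since the heavy lifting is delegated to a standard result. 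The closing density argument is the same in both: the paper simply says ``conclude by density,'' and your sketch of which quantities must converge (bulk integrals via $L^2$-convergence of the Caputo derivatives, point values and fractional integrals via uniform convergence and Proposition~\ref{pro:continuity}) is at least as detailed as the paper's, though, like the paper, you do not exhibit the approximating sequences explicitly.
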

\begin{proof}
If $f$ and $g$ are smooth, recall that \cite[Corollary 2, \S2.6]{Samko}:
\[
 \int_{0}^{T} D_{t}^{\gamma} f(t) g(t) \diff t  =  
 \int_{0}^{T}  f(t) D_{T-t}^{\gamma} g(t) \diff t;
\] 
\eqref{fractional_by_parts} now follows from \eqref{relations} and \eqref{fractional_integral}; the point values are well defined since $f \in C([0,T])$ implies $I_t^{\gamma}f,I_{T-t}^{\gamma}f \in C([0,T])$; see Proposition~\ref{pro:continuity}. Conclude by density.
\end{proof}

It is important to remark that there is another definition, not completely equivalent, of fractional derivatives: the so-called Gr\"unwald-Letnikov derivative \cite{fractional_book}. Among all possible definitions of fractional derivatives, we adopt the left-sided Caputo fractional derivative as $\partial_t^{\gamma}$ in problem \eqref{fractional_heat}: the Caputo approach leads to an initial condition of the form $\usf = \usf_0$ which is \emph{physically meaningful}. The Riemann-Liouville approach
leads to initial conditions containing the limit values of the Riemann-Liouville
fractional derivatives at $t=0$, something that does not have a clear physical meaning.

%%%%%%%%%%%%%%%%%%%%%%%%%%%%%%%%%%%%%%%%%%%%%%%%%%%%%%%%%%%%%%%%%%%%%%%%%%%%%%%%%%%%%%
\subsection{Fractional powers of second order elliptic operators}
\label{sub:fractional_L}
%%%%%%%%%%%%%%%%%%%%%%%%%%%%%%%%%%%%%%%%%%%%%%%%%%%%%%%%%%%%%%%%%%%%%%%%%%%%%%%%%%%%%%
Spectral theory for the operator $\mathcal{L}$ yields the existence of $\{(\lambda_k,\varphi_k)\}_{k \in \mathbb N} \subset \R^+ \times H^1_0(\Omega)$ such that
\begin{equation}
  \label{eigenvalue_problem_L}
    \mathcal{L} \varphi_k = \lambda_k \varphi_k  \text{ in } \Omega,
    \qquad
    \varphi_k = 0 \text{ on } \partial\Omega, \qquad k \in \mathbb N.
\end{equation}
$\{\varphi_k\}_{k \in \mathbb N}$ is an orthonormal basis of $L^2(\Omega)$. Fractional powers of  $\mathcal L$, are defined by
\begin{equation}
  \label{def:second_frac}
  \mathcal{L}^s w  := \sum_{k=1}^\infty \lambda_k^{s} w_k \varphi_k,
  \quad \forall w \in C_0^{\infty}(\Omega), \qquad s \in (0,1),
  \quad
  w_k = \int_{\Omega} w \varphi_k.
\end{equation}
By density, \eqref{def:second_frac} can be extended to $\Hs = [L^2(\Omega),H_0^1(\Omega)]_{s}$. If we denote by $\Hsd$ the dual of $\Hs$, then $\mathcal{L}^s : \Hs \to \Hsd$ is an isomorphism.

%%%%%%%%%%%%%%%%%%%%%%%%%%%%%%%%%%%%%%%%%%%%%%%%%%%%%%%%%%%%%%%%%%%%%%%%%%%%%%%%%%%%%%
\subsection{Weighted Sobolev spaces}
\label{sub:weighted}
%%%%%%%%%%%%%%%%%%%%%%%%%%%%%%%%%%%%%%%%%%%%%%%%%%%%%%%%%%%%%%%%%%%%%%%%%%%%%%%%%%%%%%

To study \eqref{heat_alpha_extension} we consider Sobolev spaces with the weight $|y|^{\alpha}$, $\alpha \in (-1,1)$. For $D \subset \R^{n+1}$ we define 
\[ 
  H^1(|y|^{\alpha},D) = \left\{ w \in L^2(|y|^{\alpha},D): | \nabla w | \in L^2(|y|^{\alpha},D) \right\},
\]
with norm
\begin{equation}
\label{wH1norm}
\| w \|_{H^1(|y|^{\alpha},D)} = \left( \| w \|^2_{L^2(|y|^{\alpha},D)} + \| \nabla w \|^2_{L^2(|y|^{\alpha},D)} \right)^{1/2}.
\end{equation}
Since $\alpha \in (-1,1)$, $|y|^\alpha$ belongs to the Muckenhoupt class $A_2(\R^{n+1})$; see \cite{GU,Turesson}. Then, $H^1(|y|^{\alpha},D)$ is Hilbert and $C^{\infty}(D) \cap H^1(|y|^{\alpha},D)$ is dense in $H^1(|y|^{\alpha},D)$ (cf.~\cite[Proposition 2.1.2, Corollary 2.1.6]{Turesson}, \cite{KO84} and \cite[Theorem~1]{GU}).

We also define the weighted Sobolev space
\begin{equation}
  \label{HL10}
  \HL(y^{\alpha},\C) = \left\{ w \in H^1(y^\alpha,\C): w = 0 \textrm{ on } \partial_L \C\right\}.
\end{equation}
As \cite[(2.21)]{NOS} shows, the following \emph{weighted Poincar\'e inequality} holds:
\begin{equation}
\label{Poincare_ineq}
\| w \|_{L^2(y^{\alpha},\C)} \lesssim \| \nabla w \|_{L^2(y^{\alpha},\C)}, \quad \forall w \in \HL(y^{\alpha},\C),
\end{equation}
thus $\| \nabla w \|_{L^2(y^{\alpha},\C)}$ is equivalent to \eqref{wH1norm} in $\HL(y^{\alpha},\C)$.
For $w \in H^1( y^{\alpha},\C)$, $\tr w$ denotes its trace onto $\Omega \times \{ 0 \}$. We recall (\cite[Prop. 2.5]{NOS})
\begin{equation}
\label{Trace_estimate}
\tr \HL(y^\alpha,\C) = \Hs,
\qquad
  \|\tr w\|_{\Hs} \lesssim
%   \leq C_{\tr} 
  \| w \|_{\HLn(y^\alpha,\C)}.
\end{equation}

%%%%%%%%%%%%%%%%%%%%%%%%%%%%%%%%%%%%%%%%%%%%%%%%%%%%%%%%%%%%%%%%%%%%%%%%%%%%%%%%%%%%%%
\subsection{The state equation}
\label{sub:stateequation}
%%%%%%%%%%%%%%%%%%%%%%%%%%%%%%%%%%%%%%%%%%%%%%%%%%%%%%%%%%%%%%%%%%%%%%%%%%%%%%%%%%%%%%

We follow \cite{NOS3} and define
\begin{align*}
  \mathbb{W} &:= \{ w \in L^{\infty}(0,T;L^2(\Omega)) \cap L^{2}(0,T;\Hs): \partial_t^{\gamma} 
    w \in L^2(0,T;\Hsd)\}, \\
  \mathbb{V} &:= \{ w \in L^{2}(0,T;\HL(y^{\alpha},\C)): \partial_t^{\gamma} \tr w  \in L^2(0,T;\Hsd)\}.  
\end{align*}
The Caffarelli-Silvestre extension result for problem \eqref{fractional_heat} reads \cite{CS:07,CDDS:11,ST:10,NOS3}: Given $\fsf, \zsf \in L^2(0,T;\Hsd)$, the function $\usf \in \mathbb{W}$ solves \eqref{fractional_heat} if and only if its harmonic extension $\ue \in \mathbb{V}$ solves the following version of \eqref{heat_alpha_extension}:
Find $\ue \in \mathbb{V}$ such that $\tr \ue(0) = \usf_0$ and for a.e.~$t \in (0,T)$
\begin{equation}
\label{heat_harmonic_extension_weak}
\langle \tr  \partial_t^{\gamma} \ue, \tr \phi \rangle + a(\ue,\phi) 
  = \langle \fsf + \zsf, \tr \phi \rangle \qquad \forall \phi \in \HL(y^{\alpha},\C),
\end{equation}
where $\langle \cdot, \cdot \rangle$ is the duality pairing between $\Hs$ and $\Hsd$ and
\begin{equation}
\label{a}
a(w,\phi) =   \frac{1}{d_s}\int_{\C} y^{\alpha}\left( \mathbf{A}(x') \nabla w \cdot \nabla \phi
               + c(x') w \phi \right) \diff x' \diff y.
\end{equation}
The regularity of $\mathbf{A}$ and $c$ implies that $a$ is bounded and coercive in $\HL(y^\alpha,\C)$. In what follows, we shall use repeatedly that $a(w,w)^{1/2}$ is a norm equivalent to $|\cdot|_{\HLn(y^\alpha,\C)}$.

% Recall that \cite{NOS3} if  $\usf(x',t)=\sum_k \usf_k(t) \varphi_k(x')$, then $\ue$ can be written as
% \begin{equation}
% \label{exactforms}
%   \ue(x',y,t) = \sum_{k=1}^\infty \usf_k(t) \varphi_k(x') \psi_k(y),
% \end{equation}
% where $\{ \lambda_k,\varphi_k \}$ denote the eigenpairs defined by \eqref{eigenvalue_problem_L} and $\psi_k$ solves
% \begin{equation}
% \label{psik}
% \psi_k'' + \alpha y^{-1} \psi_k' - \lambda_k \psi_k = 0, \quad
% \psi_k(0) = 1, \quad \psi_k(y) \to 0, \ y \to \infty.
% \end{equation}
% If $s=\sr$, then $\psi_k(y) = e^{-\sqrt{\lambda_k}y}$. For $s \in (0,1)\setminus\{\srn\}$, we have that if $c_s = 2^{1-s}/\Gamma(s)$, then
% $
%   \psi_k(y) = c_s (\sqrt{\lambda_k}y)^s K_s(\sqrt{\lambda_k} y),
% $
% where $K_s$ is the modified Bessel function of the second kind \cite{CDDS:11,NOS}. In addition, the dynamic boundary condition of problem \eqref{heat_alpha_extension} yields a fractional initial value problem for $\usf_k$:
% \begin{equation} 
% \label{uk}
% \partial^{\gamma}_t \usf_k(t) + \lambda_k^s \usf_k(t) = \fsf_k(t) + \zsf_k(t), \quad t > 0  \qquad \usf_k(0) = \usf_{0,k},
% \end{equation}
% with $\usf_{0,k} = (\usf_0,\varphi_k)_{L^2(\Omega)}$, 
% $\fsf_{k} = \langle \fsf,\varphi_k\rangle$, and  $\zsf_{k} = \langle \zsf,\varphi_k \rangle$; see \cite[\S2.4]{NOS3}.

Denote
\begin{align}
\label{Lambda}
\Lambda_{\gamma}^2(\vsf,\gsf) &:= I_t^{1-\gamma} \| \vsf \|_{L^2(\Omega)}^2(T) +  \| \gsf\|^2_{L^2(0,T;\Hsd)},
\\
\label{Sigma}
\Sigma^2 (\vsf,\gsf) &:= \| \vsf \|_{L^2(\Omega)}^2 +  \| \gsf \|^2_{L^2(0,T;\Hsd)},
\end{align}
where $I_t^{1-\gamma}$ is the left fractional integral of order $1-\gamma$ defined in \eqref{fractional_integral}.

\begin{theorem}[existence and uniqueness of $\usf$ and $\ue$]
\label{thm:exis_uniq}
Given $s \in (0,1)$, $\gamma \in (0,1]$, $\fsf, \zsf \in L^2(0,T;\Hsd)$ and $\usf_0 \in L^2(\Omega)$, problems \eqref{fractional_heat} and \eqref{heat_harmonic_extension_weak} have a unique solution. In addition, we have the following energy estimates for $\usf$, solution to \eqref{fractional_heat}:
\begin{align}
\label{stab_u}
I_t^{1-\gamma} \| \usf \|_{L^2(\Omega)}^2(T) + \|\usf \|^2_{L^2(0,T;\Hs)} 
& \lesssim \Lambda_{\gamma}^2(\usf_0,\fsf + \zsf),
\\
\label{energy_u}
\|\usf \|^2_{L^{^{\!\infty}\!}(0,T;L^2(\Omega))} + \|\usf \|^2_{L^2(0,T;\Hs)}
& \lesssim \Sigma^2 (\usf_0,\fsf + \zsf).
\end{align}
In addition, we have following energy estimates for $\ue$ solution to \eqref{heat_harmonic_extension_weak}:
\begin{align}
\label{stab_ue}
I_t^{1-\gamma} \| \tr \ue \|_{L^2(\Omega)}^2(T) + \| \ue \|^2_{L^2(0,T;\HLn(y^{\alpha},\C))} & \lesssim \Lambda_{\gamma}^2(\usf_0,\fsf + \zsf),
\\
\label{energy_ve}
\|\tr \ue \|^2_{L^{^{\!\infty}\!}(0,T;L^2(\Omega))} + \| \ue \|^2_{L^2(0,T;\HLn(y^{\alpha},\C))}
& \lesssim \Sigma^2 (\usf_0,\fsf + \zsf),
\end{align}
where the hidden constants do not depend on $\usf$, $\ue$ nor the problem data.
\end{theorem}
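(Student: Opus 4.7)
The plan is to prove existence, uniqueness, and both energy estimates first at the level of $\usf$ by a Galerkin method in the spectral basis $\{\varphi_k\}_{k\in\mathbb{N}}$ of \eqref{eigenvalue_problem_L}, and then to transfer the statements for $\ue$ via the Caffarelli--Silvestre equivalence recalled in \S\ref{sub:stateequation}, together with the trace identity \eqref{Trace_estimate}.

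First I would expand $\usf(x',t) = \sum_k u_k(t)\varphi_k(x')$, and analogously $\usf_0 = \sum_k u_{0,k}\varphi_k$ and $\fsf+\zsf = \sum_k (f_k+z_k)\varphi_k$. Projecting \eqref{fractional_heat} onto each $\varphi_k$ and using \eqref{def:second_frac} decouples the problem into scalar linear fractional ODEs
\begin{equation*}
\partial_t^{\gamma} u_k(t) + \lambda_k^s u_k(t) = f_k(t) + z_k(t), \qquad u_k(0) = u_{0,k},
\end{equation*}
each of which has a unique solution, representable via the Mittag--Leffler function for $\gamma\in(0,1)$ and by Duhamel for $\gamma=1$. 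Convergence of the partial sums in $\mathbb{W}$ will be a consequence of the a priori bounds derived next, and uniqueness in $\mathbb{W}$ follows from linearity.

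To establish \eqref{stab_u}, I would test the $k$-th ODE with $u_k$, use the pointwise Alikhanov-type inequality $u_k(t)\partial_t^{\gamma} u_k(t) \ge \tfrac12 \partial_t^{\gamma}(u_k^2)(t)$, and integrate over $(0,T)$. Combining this with the identity
\begin{equation*}
\int_0^T \partial_t^{\gamma} g(t)\,\diff t = I_t^{1-\gamma}g(T) - \frac{g(0)\,T^{1-\gamma}}{\Gamma(2-\gamma)},
\end{equation*}
obtained by Fubini and integration by parts from \eqref{caputo}, applied to $g = u_k^2$, and bounding the resulting forcing term by Young's inequality with weights $\lambda_k^{\pm s}$, one arrives at
\begin{equation*}
\tfrac12\, I_t^{1-\gamma}(u_k^2)(T) + \tfrac{\lambda_k^s}{2}\int_0^T u_k^2\,\diff t \;\lesssim\; u_{0,k}^2\,\tfrac{T^{1-\gamma}}{\Gamma(2-\gamma)} + \tfrac{\lambda_k^{-s}}{2}\int_0^T (f_k+z_k)^2\,\diff t.
\end{equation*}
Summing over $k$ and identifying $\|\usf\|_{\Hs}^2 = \sum_k \lambda_k^s u_k^2$, $\|\fsf+\zsf\|_{\Hsd}^2 = \sum_k \lambda_k^{-s}(f_k+z_k)^2$, and $I_t^{1-\gamma}\|\usf_0\|_{L^2(\Omega)}^2(T) = \|\usf_0\|_{L^2(\Omega)}^2\,T^{1-\gamma}/\Gamma(2-\gamma)$ yields \eqref{stab_u}; for $\gamma = 1$ the operator $I_t^{1-\gamma}$ collapses to the identity and one recovers the classical parabolic estimate. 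For \eqref{energy_u}, I would again use the Alikhanov inequality, now combined with a Gronwall-type argument applied to the partial sums $\sum_{k\le N} u_k^2(t)$, to obtain the $L^\infty$-in-time bound.

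The statements for $\ue$ then follow without further analytical effort: the equivalence recorded in \S\ref{sub:stateequation} provides existence, uniqueness, and the identity $\tr\ue = \usf$; the coercivity and boundedness of $a$ on $\HL(y^\alpha,\C)$, together with the trace estimate \eqref{Trace_estimate}, translate \eqref{stab_u}--\eqref{energy_u} into \eqref{stab_ue}--\eqref{energy_ve}. The main obstacle I foresee is the rigorous use of the chain-rule inequality $2v\,\partial_t^{\gamma} v \ge \partial_t^{\gamma}(v^2)$ and of Lemma~\ref{lem:frac_by_parts}, both of which require sufficient regularity in time ($v \in \mathbb{L}_\gamma$). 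This is handled by regularizing the data in time, verifying the estimates for the resulting smooth approximations, and then passing to the limit using the $L^p$-continuity of $I_t^{\sigma}$ from Proposition~\ref{pro:continuity} together with the uniform bounds just derived.
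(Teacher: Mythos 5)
Your overall strategy (spectral decomposition into scalar fractional ODEs, Mittag--Leffler/Duhamel solvability, per-mode energy estimates, then transfer to $\ue$ through the Caffarelli--Silvestre equivalence) is precisely the route taken by the references the paper cites for this result: the paper's own ``proof'' is a citation to \cite{NOS3} and Sakamoto--Yamamoto, and your derivation of \eqref{stab_u} --- testing with $u_k$, using $u_k\,\partial_t^{\gamma}u_k\ge\tfrac12\partial_t^{\gamma}(u_k^2)$, the identity $\int_0^T\partial_t^{\gamma}g\diff t=I_t^{1-\gamma}g(T)-g(0)T^{1-\gamma}/\Gamma(2-\gamma)$, Young's inequality with weights $\lambda_k^{\pm s}$, and summation over $k$ --- is correct and matches that argument.

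There is, however, a genuine gap in your treatment of the uniform-in-time estimate \eqref{energy_u} (and hence \eqref{energy_ve}) when $\gamma\in(0,1)$. The Alikhanov inequality plus absorption of the forcing gives, after summation, $\partial_t^{\gamma}\|\usf(t)\|^2_{L^2(\Omega)}\le\|(\fsf+\zsf)(t)\|^2_{\Hsd}$, and any ``Gronwall-type'' step here amounts to applying $I_t^{\gamma}$, which yields $\|\usf(t)\|^2_{L^2(\Omega)}\le\|\usf_0\|^2_{L^2(\Omega)}+I_t^{\gamma}\big[\|\fsf+\zsf\|^2_{\Hsd}\big](t)$. Since $\fsf+\zsf\in L^2(0,T;\Hsd)$ only gives $\|\fsf+\zsf\|^2_{\Hsd}\in L^1(0,T)$, and $I_t^{\gamma}$ does not map $L^1(0,T)$ into $L^\infty(0,T)$ (the kernel $(t-r)^{\gamma-1}$ is unbounded near $r=t$), this does not produce a bound by $\Sigma^2(\usf_0,\fsf+\zsf)$ uniform in $t$; the argument closes only for $\gamma=1$, where $I_t^1$ is the plain integral. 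For $\gamma\in(0,1)$ the $L^\infty(0,T;L^2(\Omega))$ control is obtained in the cited works from the per-mode representation $u_k(t)=E_{\gamma,1}(-\lambda_k^st^{\gamma})u_{0,k}+\int_0^t(t-r)^{\gamma-1}E_{\gamma,\gamma}(-\lambda_k^s(t-r)^{\gamma})(f_k+z_k)(r)\diff r$ together with sharp decay estimates for the Mittag--Leffler kernels (e.g.\ $E_{\gamma,\gamma}(-x)\lesssim(1+x)^{-1}$ and $\int_0^t\sigma^{\gamma-1}E_{\gamma,\gamma}(-\lambda\sigma^{\gamma})\diff\sigma\le\lambda^{-1}$); the singular kernel is a real obstruction, not a technicality that regularization of the data removes, so this part of your proof needs that finer argument rather than a fractional Gronwall lemma. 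A second, smaller imprecision: the trace estimate \eqref{Trace_estimate} bounds $\|\tr w\|_{\Hs}$ by $\|w\|_{\HLn(y^{\alpha},\C)}$, which is the wrong direction for \eqref{stab_ue} and \eqref{energy_ve}. What converts the bounds on $\usf$ into bounds on $\|\ue\|_{L^2(0,T;\HLn(y^{\alpha},\C))}$ is that, for a.e.\ $t$, $\ue(t)$ solves the degenerate elliptic problem in $\C$ with trace $\usf(t)$, so its energy is comparable to $\|\usf(t)\|^2_{\Hs}$ (the minimal-energy property of the $\alpha$-harmonic extension); this should be stated explicitly rather than attributed to coercivity of $a$ and \eqref{Trace_estimate} alone.
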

\begin{proof}
The well-posedness of \eqref{fractional_heat} and \eqref{heat_alpha_extension}, together with \eqref{stab_u} and \eqref{stab_ue} are presented in \cite[Theorem 2.6 and Corollary 3.8]{NOS3}. The estimates \eqref{energy_u} and \eqref{energy_ve} follow from the arguments developed in \cite{NOS3,Sakayama}.
\end{proof}

\begin{remark}[$\gamma=1$] \rm
\label{RE:militing_case}
Given $g \in L^p(0,T)$,  we have $I^\sigma g \rightarrow g$ in $L^p(0,T)$ as $\sigma \downarrow 0$ \cite[Theorem 2.6]{Samko}. Take the limit as $\gamma \uparrow 1$ in \eqref{stab_u} and \eqref{stab_ue}, to recover the well known energy estimates for parabolic equations with first order derivative in time.
\end{remark}

\begin{remark}[continuity in time] \label{rm:cont_t} \rm
An adaption of \cite[Theorems 2.1--2.2]{Sakayama} shows that, for every $\gamma \in (0,1]$ and $s \in (0,1)$, 
the solution $\tr \ue = \usf \in C([0,T];L^2(\Omega))$. This is not only necessary to make sense of the initial condition, but also to derive optimality conditions, as we will see in section~\ref{sec:control}.
\end{remark}

We conclude with an elementary extension of Lemma~\ref{lem:frac_by_parts}.

\begin{lemma}[fractional integration by parts]
\label{lem:ibp}
Let $\gamma \in (0,1]$. If $\vsf, \wsf \in \mathbb{W} \cap C([0,T];L^2(\Omega))$, then we have the following integration by parts formula:
\begin{multline*}
\int_0^T \langle \partial_t^{\gamma}\vsf(t), \wsf(t) \rangle - \langle \partial_{T-t}^{\gamma}\wsf(t), \vsf(t) \rangle  \diff t 
= ( \wsf(T) , (I_{t}^{1-\gamma}\vsf)(T) ) - ( \vsf(0) , (I_{T-t}^{1-\gamma}\wsf)(0)),
\end{multline*}
where $(\cdot, \cdot ) = (\cdot, \cdot )_{L^2(\Omega)}$ and $\langle \cdot, \cdot \rangle$ is the duality pairing between $\Hs$ and $\Hsd$.
\end{lemma}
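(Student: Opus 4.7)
The plan is to reduce the Hilbert-valued identity to the scalar one of Lemma~\ref{lem:frac_by_parts} via Fubini, and then extend by density. I will focus on $\gamma\in(0,1)$; the endpoint $\gamma=1$ amounts to the classical $L^2(\Omega)$-valued integration by parts in time, since $I_t^0$ and $I_{T-t}^0$ reduce to the identity (cf.~Remark~\ref{RE:militing_case}) and $\partial_{T-t}^1=-\partial_t$.

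First, I would establish the identity on a smooth dense subclass. Take $\vsf,\wsf\in C^1([0,T];H_0^1(\Omega))$ with the prescribed end-values. For such functions both $\partial_t^\gamma \vsf$ and $\partial_{T-t}^\gamma \wsf$ are classical $L^2(\Omega)$-valued functions, so the duality pairings collapse to $L^2(\Omega)$ inner products and Fubini converts the time integrals into $\int_\Omega\int_0^T$. For each fixed $x\in\Omega$ the scalar pair $(\vsf(x,\cdot),\wsf(x,\cdot))$ lies in $\mathbb{L}_\gamma\times\mathbb{R}_\gamma$; applying Lemma~\ref{lem:frac_by_parts} pointwise in $x$, integrating over $\Omega$, and rearranging already produces the claimed formula in the smooth setting.

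Second, I would approximate. Using the eigenbasis $\{\varphi_k\}$ of Subsection~\ref{sub:fractional_L}, set $\vsf^n=\sum_{k\le n}\vsf_k(t)\varphi_k$ with $\vsf_k(t)=(\vsf(t),\varphi_k)$, and analogously for $\wsf^n$; a further time mollification (arranged to preserve the values at $t=0$ and $t=T$) yields sequences in $C^1([0,T];H_0^1(\Omega))$ such that, as $n\to\infty$,
\[
  \vsf^n\to\vsf,\ \wsf^n\to\wsf\quad \text{in } C([0,T];L^2(\Omega))\cap L^2(0,T;\Hs),
\]
together with $\partial_t^\gamma \vsf^n\to\partial_t^\gamma\vsf$ in $L^2(0,T;\Hsd)$ and (in the regime where the right-hand integral has a meaning, which is the one used to derive the adjoint identities in Section~\ref{sec:control}) $\partial_{T-t}^\gamma\wsf^n\to\partial_{T-t}^\gamma\wsf$ in $L^2(0,T;\Hsd)$. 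Both left-hand integrals then pass to the limit by pairing a strongly $L^2(0,T;\Hs)$-convergent factor against an $L^2(0,T;\Hsd)$-convergent one.

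Third, I would handle the boundary evaluations. Applying Proposition~\ref{pro:continuity} coefficient-wise in the eigenexpansion shows that $I_t^{1-\gamma}$ and $I_{T-t}^{1-\gamma}$ act continuously on $C([0,T];L^2(\Omega))$, so $(I_t^{1-\gamma}\vsf^n)(T)\to(I_t^{1-\gamma}\vsf)(T)$ and $(I_{T-t}^{1-\gamma}\wsf^n)(0)\to(I_{T-t}^{1-\gamma}\wsf)(0)$ in $L^2(\Omega)$, which together with the convergence $\vsf^n(0)\to\vsf(0)$, $\wsf^n(T)\to\wsf(T)$ in $L^2(\Omega)$ (granted by $C([0,T];L^2(\Omega))$ convergence) lets the point values on the right pass to the limit as $L^2(\Omega)$ inner products. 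The main obstacle is the last-mentioned approximation of $\partial_{T-t}^\gamma \wsf$: the class $\mathbb{W}$ controls only the \emph{left} Caputo derivative, so strictly speaking the statement is understood under the tacit condition that $\partial_{T-t}^\gamma \wsf$ makes sense in $L^2(0,T;\Hsd)$; the symmetric construction in the eigenbasis with a time-reversed mollifier then yields the required convergence and completes the density argument.
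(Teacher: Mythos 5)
Your proposal is correct and follows essentially the same route as the paper: establish the identity for smooth functions by applying the scalar formula \eqref{fractional_by_parts} pointwise in $x$ and integrating over $\Omega$, then conclude by density, with the point values controlled through Proposition~\ref{pro:continuity}. Your remark that membership in $\mathbb{W}$ only controls the left Caputo derivative, so the lemma is tacitly understood under the assumption that $\partial_{T-t}^{\gamma}\wsf$ is meaningful in $L^2(0,T;\Hsd)$, is a fair observation about a hypothesis the paper leaves implicit, and it does not affect the validity of the argument.
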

\begin{proof}
When $\vsf$ and $\wsf$ are smooth we integrate \eqref{fractional_by_parts}. Conclude by density.
\end{proof}

% \begin{proof}
% Assume initially that we have smooth functions $\vsf$ and $\wsf$. For such functions Lemma~\ref{lem:frac_by_parts} yields that, for every $x' \in \Omega$, we have
% \begin{multline*}
% \int_0^T \partial_t^{\gamma}\vsf(x',t) \wsf(x',t) \diff t  = \wsf(x',T) I_{t}^{1-\gamma}\vsf(x',T) \\
%   - \vsf(x',0) I_{T-t}^{1-\gamma}\wsf(x',0) 
% + \int_0^T \partial_{T-t}^{\gamma}\wsf(x',t) \vsf(x't) \diff t.
% \end{multline*}
% Integrate this identity over $\Omega$ and change the order of integration. This yields the sought formula for smooth functions. \EO{Since inner products and the fractional integral operators $I^{1-\gamma}_t$ and $I^{1-\gamma}_{T-t}$ are continuous;  see Proposition~\ref{pro:continuity}, the general case follows by first approximating by smooth functions and then passing to the limit.}
% \end{proof}

%%%%%%%%%%%%%%%%%%%%%%%%%%%%%%%%%%%%%%%%%%%%%%%%%%%%%%%%%%%%%%%%%%%%%%%%%%%%%%%%%%%%%%
\section{The fractional control problem}
\label{sec:control}
%%%%%%%%%%%%%%%%%%%%%%%%%%%%%%%%%%%%%%%%%%%%%%%%%%%%%%%%%%%%%%%%%%%%%%%%%%%%%%%%%%%%%%

In this section, we analyze the \emph{space-time fractional optimal control problem}. We derive existence and uniqueness results together with first order necessary and sufficient optimality conditions.

For $J$ defined in \eqref{Jintro} the fractional control problem reads: Find
$
 \text{min } J(\usf,\zsf), 
$
subject to the state equation \eqref{fractional_heat} and the control constraints \eqref{cc}. The set of \emph{admissible controls} is defined by
\begin{equation}
 \label{ac}
 \Zad:= \left\{ \wsf \in L^2(Q):\ \asf(x',t) \leq \wsf(x',t) \leq \bsf(x',t), \ \textrm{a.e. }  (x',t) \in Q \right\},
\end{equation}
which is a nonempty, bounded, closed and convex subset of $L^2(Q)$. To study this problem, following \cite[\S3]{Tbook}, we introduce the control to state operator.

\begin{definition}[control to state operator]
\label{def:fractional_operator}
The map $\mathbf{S}: L^2(0,T;\Hsd) \ni \zsf \mapsto \usf(\zsf) \in \mathbb{W}$, where $\usf(\zsf)$ solves
\eqref{fractional_heat} is called the fractional control to state operator.
\end{definition}

$\mathbf{S}$ is an affine and, by the estimates of Theorem~\ref{thm:exis_uniq}, continuous operator. Moreover, since $\mathbb{W} \hookrightarrow L^2(Q) \hookrightarrow L^2(0,T;\Hsd)$, we may also consider the operator $\mathbf{S}$ as acting from $L^2(Q)$ into itself. For simplicity, we keep the notation $\mathbf{S}$. We now define the optimal fractional state-control pair.

\begin{definition}[optimal fractional state-control pair]
A state-control pair $(\ousf(\ozsf),\ozsf)$ $\in \mathbb{W} \times \Zad$ is called optimal for the 
problem \eqref{Jintro}--\eqref{cc}, if $\ousf(\ozsf) = \mathbf{S} \ozsf$ and
\[
 J(\ousf(\ozsf),\ozsf) \leq J(\usf(\zsf),\zsf),
\]
for all $(\usf(\zsf),\zsf) \in \mathbb{W} \times \Zad$ such that $\usf(\zsf)= \mathbf{S} \zsf$.
\label{def:fractional_pair}
\end{definition}

The existence and uniqueness of an optimal state-control pair is as follows.

\begin{theorem}[existence and uniqueness]
\label{TH:optimal_control}
The optimal control problem \eqref{Jintro}--\eqref{cc} has a unique solution $(\ousf(\ozsf), \ozsf)  $ 
$\in$ $ \mathbb{W} \times \Zad$. 
\end{theorem}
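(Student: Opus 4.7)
The plan is to reduce the problem to minimization of a strictly convex, continuous functional on a nonempty, bounded, closed, convex subset of a Hilbert space, and then apply the direct method of the calculus of variations. Since the state equation is linear, introduce the reduced cost
\[
j(\zsf) := J(\mathbf{S}\zsf,\zsf) = \tfrac{1}{2}\|\mathbf{S}\zsf - \usfd\|^2_{L^2(Q)} + \tfrac{\mu}{2}\|\zsf\|^2_{L^2(Q)},
\]
with $\mathbf{S}$ the control-to-state operator of Definition~\ref{def:fractional_operator}. The problem \eqref{Jintro}--\eqref{cc} is then equivalent to minimizing $j$ over $\Zad$.

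First I would record the basic properties of the feasible set and of $\mathbf{S}$. The set $\Zad$ defined in \eqref{ac} is nonempty (since $\asf \leq \bsf$), convex, bounded in $L^2(Q)$, and closed (as an intersection of closed half-spaces), hence weakly sequentially closed. The operator $\mathbf{S}:L^2(Q)\to L^2(Q)$ is affine; its affine part $\zsf \mapsto \mathbf{S}\zsf - \mathbf{S}(0)$ is linear and continuous thanks to the energy estimate \eqref{energy_u} of Theorem~\ref{thm:exis_uniq} together with the embedding $L^2(Q) \hookrightarrow L^2(0,T;\Hsd)$. Consequently $j$ is continuous on $L^2(Q)$.

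Next I would show existence by the direct method. The functional $j$ is convex, being the sum of the squared norm of an affine map and a multiple of a squared norm. Combined with continuity, this yields weak lower semicontinuity on $L^2(Q)$. Take a minimizing sequence $\{\zsf_k\}\subset \Zad$. Boundedness of $\Zad$ in $L^2(Q)$ provides a subsequence (not relabeled) with $\zsf_k \rightharpoonup \ozsf$ in $L^2(Q)$. Weak closedness of $\Zad$ gives $\ozsf \in \Zad$, and weak lower semicontinuity yields
\[
j(\ozsf) \leq \liminf_{k\to\infty} j(\zsf_k) = \inf_{\zsf\in\Zad} j(\zsf),
\]
so $\ozsf$ is a minimizer and $(\ousf(\ozsf),\ozsf):=(\mathbf{S}\ozsf,\ozsf)$ is an optimal state-control pair in the sense of Definition~\ref{def:fractional_pair}.

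For uniqueness I would observe that the regularization term $\tfrac{\mu}{2}\|\zsf\|^2_{L^2(Q)}$ is strictly convex on $L^2(Q)$ since $\mu>0$, and the remaining term $\tfrac{1}{2}\|\mathbf{S}\zsf-\usfd\|^2_{L^2(Q)}$ is convex as the composition of a convex continuous function with an affine map; the sum is therefore strictly convex, ruling out the existence of two distinct minimizers. The only delicate point is the continuity (and hence weak-to-weak continuity of the linear part) of $\mathbf{S}$ with the correct topologies, but this is supplied directly by the estimates of Theorem~\ref{thm:exis_uniq}; once these are in place the remainder is a textbook application of the direct method.
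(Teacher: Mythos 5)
Your proposal is correct and follows essentially the same route as the paper: reduce to the cost $f(\zsf)=\frac12\|\mathbf{S}\zsf-\usf_d\|^2_{L^2(Q)}+\frac\mu2\|\zsf\|^2_{L^2(Q)}$ on $\Zad$, use the continuity of $\mathbf{S}$ (from Theorem~\ref{thm:exis_uniq}) and convexity to get weak lower semicontinuity, invoke weak sequential compactness of $\Zad$ and the direct method for existence, and strict convexity from $\mu>0$ for uniqueness. Your write-up just spells out the details the paper leaves implicit.
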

\begin{proof}
Using the operator $\mathbf{S}$, problem \eqref{Jintro}--\eqref{cc} reduces to: Minimize
\begin{equation}
\label{f_opt}
f(\zsf): = \frac12  \| \mathbf{S} \zsf - \usf_d \|^2_{L^2(Q)} + \frac\mu2 \|\zsf \|^2_{L^2(Q)},
\end{equation}
over $\Zad$. Since $\mu >0$ the strict convexity of $f$ is immediate. $\mathbf{S}$ is continuous, so $f$ is weakly lower semicontinuous. $\Zad$ is 
weakly sequentially compact. The direct method of the calculus of variations \cite[Theorem 1.15]{MR1201152} allows us to conclude.
\end{proof}

%%%%%%%%%%%%%%%%%%%%%%%%%%%%%%%%%%%%%%%%%%%%%%%%%%%%%%%%%%%%%%%%%%%%%%%%%%%%%%%%%%%%%%
\subsection{Formal Lagrangian formulation}
\label{subsub:lagrange}
%%%%%%%%%%%%%%%%%%%%%%%%%%%%%%%%%%%%%%%%%%%%%%%%%%%%%%%%%%%%%%%%%%%%%%%%%%%%%%%%%%%%%% 
We now formally derive first-order necessary and sufficient optimality conditions for the control problem \eqref{Jintro}--\eqref{cc}. We 
proceed via the \emph{Lagrangian approach} described in \cite[\S3.1]{Tbook}. We must emphasize that, although these computations are merely formal, they are quite insightful as they allow us to determine what is the correct form of the optimality conditions with a simple and intuitive procedure.

Let $\psf$ denote the adjoint variable, the Lagrangian $\mathbb{L}: \mathbb{W} \times \Zad \times \mathbb{W} \to \R$ is
\[
  \mathbb{L}(\usf,\zsf,\psf) \mapsto J(\usf,\zsf)  - \int_{Q} \left( \partial^{\gamma}_t \usf + \mathcal{L}^s \usf -\fsf -\zsf \right) \psf.
\]
We expect the following necessary and sufficient optimality conditions \cite[\S3.1]{Tbook}:
\begin{align}
\label{Dp}
\int_Q D_{\psf} \mathbb{L} (\ousf,\ozsf,\opsf) h &= 0 \quad \forall h \in \mathbb{W},
\\
\label{Du}
\int_Q D_{\usf} \mathbb{L} (\ousf,\ozsf,\opsf) h &= 0 \quad \forall h \in \mathbb{W}, 
\textrm{ with } h(0) = 0,
\\
\label{Dz}
 \int_Q D_{\zsf} \mathbb{L} (\ousf,\ozsf,\opsf) (\zsf-\ozsf) &\geq 0 \quad \forall \zsf \in \Zad.
\end{align}
We start with a formal computation which uses the 
% fractional 
integration by parts formula \eqref{fractional_by_parts}:
\begin{align*}
\int_{Q} \left( \partial^{\gamma}_t \ousf + \mathcal{L}^s \ousf -\fsf - \ozsf \right) \opsf 
& = \int_{Q} \left( \partial^{\gamma}_{T-t} \opsf + \mathcal{L}^{s} \opsf \right) \ousf - \int_{Q} \left( \fsf + \zsf \right) \opsf 
\\ & - \int_{\Omega} \ousf(0) (I_{{T-t}}^{1-\gamma}\opsf)(0) + \int_{\Omega} \opsf(T) (I_{t}^{1-\gamma}\ousf)(T).
\end{align*}
Based on the previous computation, we rewrite expression \eqref{Du} as follows:
\begin{equation}
\label{eq:derivLagrangian}
\begin{aligned}
 \int_Q D_{\usf} \mathbb{L} (\ousf,\ozsf,\opsf)  h  &= (\ousf - \usf_d,h)_{L^2(Q)} 
- (\partial^{\gamma}_{T-t} \opsf + \mathcal{L}^{s} \opsf ,h)_{L^2(Q)} \\
&- (\opsf(T) , (I_{t}^{1-\gamma} h)(T))_{L^2(\Omega)} = 0,
\end{aligned}
\end{equation}
for all $h \in \mathbb{W}$ such that $h(0) = 0$. 

Let $\phi \in C_0^\infty(0,T)$, $\psi \in C_0^\infty(\Omega)$ be arbitrary and define $h = \psi \varphi$ 
where $\varphi$ solves the Abel equation
$
 (I_t^{1-\gamma} \varphi)(t) = \phi(t).
$
This is possible because of the unique solvability of the Abel equation given in \cite[Theorem 2.1, \S2.2]{Samko}. 
Notice that with this definition $(I_t^{1-\gamma}h)(T)=0$. Using this particular choice of $h$ in \eqref{eq:derivLagrangian} yields
\[
  \int_0^T \varphi(t)\left(\partial_{T-t}^\gamma \opsf + \mathcal{L}^s \opsf - ( \ousf - \usf_d), \psi \right)_{L^2(\Omega)} \diff t = 0.
\]
Owing to the results of \cite[Theorem 13.2, Theorem 13.5]{Samko}, the range of the fractional integral $I_t^{1-\gamma}$ contains all smooth functions. In other words the relation above must hold for all smooth and compactly supported $\varphi$, which implies
\begin{equation}
\label{equation_adjoint}
 \partial^{\gamma}_{T-t} \opsf + \mathcal{L}^s \opsf = \ousf - \usf_d.
\end{equation}
It remains to obtain a terminal condition for $\opsf$. To do so, we notice that we have
\[
  \big( \opsf(T), (I_t^{1-\gamma} h)(T) \big)_{L^2(\Omega)} = 0.
\]
If we were allowed to set $h$ constant in time this would yield $\opsf(T) = 0$. However, since $h(0) = 0$, the only admissible and constant in time function is $h \equiv 0$. To circumvent this we set $h = \ell_\epsilon(t)\chi$ with $\chi\in C_0^\infty(\Omega)$ arbitrary and 
$\ell_\epsilon(t) \in C^{0,1}([0,T])$ given by
\[
  \ell_\epsilon(t) = \epsilon^{-\gamma} T^{-\gamma} t^\gamma, \quad 0 < t \leq \epsilon T, \qquad
  \ell_\epsilon(t) = 1, \quad \epsilon T < t \leq T.
\]
This particular choice of $h$ yields
\[
  \left( \opsf(T), \chi \right)_{L^2(\Omega)} (I_t^{1-\gamma}\ell_\epsilon)(T) = 0.
\]
To conclude, it remains to notice that $\lim_{\epsilon\to0}(I_t^{1-\gamma} \ell_\epsilon)(T) =(I_t^{1-\gamma} 1 )(T)>0$.
Collecting the derived equations, our formal argument yields the following strong 
system for the adjoint variable $\psf$.

\begin{definition}[fractional adjoint state]
\label{def:fractional_adjoint} 
The solution $\psf = \psf(\zsf) \in \mathbb{W}$ of
\begin{equation}
\label{fractional_adjoint}
  \partial^{\gamma}_{T-t} \psf + \mathcal{L}^{s} \psf = \usf - \usf_d \ \text{in } Q, \quad
  \psf(T) = 0 \ \text{in } \Omega,
\end{equation}
for $\zsf \in L^2(0,T;\Hsd)$, is called the fractional adjoint state associated to $\usf = \usf(\zsf)$.
\end{definition}

\begin{remark}[$\gamma \to 1$] \rm
\label{RE:militing_case2}
For $g \in W^1_1(0,T)$, $\partial_{T-t}^{\gamma} g \rightarrow - \partial_t g$ as $\gamma \uparrow 1$ and we recover
\[
  -\partial_t \psf + \mathcal{L}^{s} \psf =  \usf - \usf_d \ \text{in } Q, \quad
  \psf(T) = 0 \ \text{in } \Omega,
\]
a standard backwards parabolic problem with terminal condition.
\end{remark}

Well-posedness of \eqref{fractional_adjoint} follows from a change of 
variables. If $\psi:[0,T] \to \mathbb R$ define $\tilde \psi(t) := \psi(T-t)$ and notice that $\tilde \psi'(t) = -\psi'(T-t)$. Therefore, with $c_\gamma = \Gamma(1-\gamma)$,
\[
  c_\gamma \partial_t^\gamma \tilde \psi(T-t) = \int_{0}^{T-t} \frac{-\psi'(T-\xi)}{( (T-t) -\xi)^{\gamma}} \diff \xi
  = - \int_{t}^{T} \frac{\psi'(\mu)}{(\mu - t)^{\gamma}} \diff \mu =  c_\gamma \partial_{T-t}^{\gamma} \psi(t).
\]
As a consequence, the backwards in time problem \eqref{fractional_adjoint} with a right Caputo fractional derivative can be equivalently written as a forward in time problem with a left Caputo fractional derivative as \eqref{fractional_heat}. The well-posedness of \eqref{fractional_adjoint} then follows from \S\ref{sub:stateequation}.

We conclude this formal analysis with the following variational inequality:
\begin{equation}
\label{viz}
\left( \mu \ozsf + \opsf, \zsf - \ozsf \right)_{L^2(Q)} \geq 0, \qquad \forall \zsf \in \Zad,
\end{equation}
which follows from \eqref{Dz}.

\begin{remark}[Lagrangian approach]\rm
Although formal, this approach is systematic and useful to derive optimality conditions of a control problem, specially in our case, where the state equation \eqref{fractional_heat} involves fractional derivatives in time and space.
\end{remark}

%%%%%%%%%%%%%%%%%%%%%%%%%%%%%%%%%%%%%%%%%%%%%%%%%%%%%%%%%%%%%%%%%%%%%%%%%%%%%%%%%%%%%%
\subsection{Optimality conditions}
\label{sub:optim}
%%%%%%%%%%%%%%%%%%%%%%%%%%%%%%%%%%%%%%%%%%%%%%%%%%%%%%%%%%%%%%%%%%%%%%%%%%%%%%%%%%%%%%
We begin with a classical result.

\begin{lemma}[variational inequality]
Let $f$ be defined by \eqref{f_opt}. The function $\ozsf \in \Zad$ minimizes the functional $f$ if and only if
\begin{equation}
\label{vi}
  (f'(\ozsf),\zsf - \ozsf )_{L^2(Q)} \geq 0,
\end{equation}
for every $\zsf \in \Zad$.
\end{lemma}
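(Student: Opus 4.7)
The plan is to invoke the standard convex optimization argument, since $f$ defined by \eqref{f_opt} is convex (in fact strictly convex because $\mu>0$) and differentiable on $L^2(Q)$, and $\Zad$ is convex.

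For the necessity direction, I would assume $\ozsf \in \Zad$ minimizes $f$, pick an arbitrary $\zsf \in \Zad$, and consider the admissible perturbation $\ozsf_t := \ozsf + t(\zsf - \ozsf)$ for $t \in (0,1]$, which lies in $\Zad$ by convexity. From $f(\ozsf_t) \geq f(\ozsf)$ I would form the difference quotient $(f(\ozsf_t) - f(\ozsf))/t \geq 0$ and pass to the limit $t \downarrow 0$, using the Fr\'echet differentiability of $f$ at $\ozsf$, to obtain \eqref{vi}. Differentiability of $f$ follows from the fact that $\mathbf{S}$ is affine and continuous from $L^2(Q)$ into itself (as noted after Definition~\ref{def:fractional_operator}), so the composition with the squared $L^2$-norm is smooth; one can write $f'(\ozsf)h = (\mathbf{S}\ozsf - \usf_d, \mathbf{S}'h)_{L^2(Q)} + \mu(\ozsf,h)_{L^2(Q)}$ where $\mathbf{S}'$ is the linear part of $\mathbf{S}$.

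For the sufficiency direction, I would use convexity of $f$: for any $\zsf \in \Zad$,
\[
f(\zsf) - f(\ozsf) \geq (f'(\ozsf), \zsf - \ozsf)_{L^2(Q)} \geq 0,
\]
where the first inequality is the standard characterization of convex differentiable functions and the second is the assumed variational inequality \eqref{vi}. This shows $\ozsf$ is a minimizer.

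There is no substantial obstacle here: the statement is essentially a textbook characterization of minimizers of a convex differentiable functional over a convex set, and could be cited directly from, e.g., \cite{Tbook}. The only item requiring a brief check is that $f$ is indeed $C^1$ on $L^2(Q)$, which reduces to the affine continuous dependence of $\mathbf{S}\zsf$ on $\zsf \in L^2(Q)$ guaranteed by Theorem~\ref{thm:exis_uniq} together with the embedding $\mathbb{W} \hookrightarrow L^2(Q)$.
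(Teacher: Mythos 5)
Your proposal is correct and matches the paper's treatment: the paper simply cites the standard result (\cite[Lemma 2.21]{Tbook}), and your argument is exactly the textbook proof behind that citation, with the only point needing care — the Fr\'echet differentiability of $f$ via the affine continuity of $\mathbf{S}$ — handled appropriately. Nothing further is required.
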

\begin{proof}
See \cite[Lemma 2.21]{Tbook}.
\end{proof}

To derive first-order optimality conditions, we need the following result.

\begin{lemma}[auxiliary result I]
\label{le:identity}
Let $\ozsf$ denote the optimal control given by Theorem~\ref{TH:optimal_control} and $\ousf = \mathbf{S}\ozsf$.
Then, for every $\zsf \in \Zad$, we have
\begin{equation}
\label{identity}
  (\ousf - \usf_d,\usf - \ousf)_{ L^2(Q)} =  ( \opsf, \zsf - \ozsf)_{L^2(Q)},
\end{equation}
where $\usf = \mathbf{S}\zsf \in \mathbb{W}$ and $\psf = \psf(\zsf) \in \mathbb{W}$ solve problems \eqref{fractional_heat} and \eqref{fractional_adjoint}, respectively.
\end{lemma}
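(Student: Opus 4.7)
\medskip
\noindent\textbf{Proof plan.} The strategy is to test the adjoint equation against $\usf-\ousf$, test the state equation (for the difference) against $\opsf$, and use the fractional integration-by-parts formula of Lemma~\ref{lem:ibp} to transfer the time derivative. By linearity of $\mathbf{S}$, the difference $\usf - \ousf$ is the unique solution in $\mathbb{W}$ of
\begin{equation*}
\partial_t^{\gamma}(\usf-\ousf) + \mathcal{L}^s(\usf-\ousf) = \zsf - \ozsf \text{ in } Q, \qquad (\usf-\ousf)(0) = 0 \text{ in } \Omega,
\end{equation*}
while, by Definition~\ref{def:fractional_adjoint}, $\opsf=\opsf(\ozsf)\in\mathbb{W}$ satisfies
\begin{equation*}
\partial_{T-t}^{\gamma}\opsf + \mathcal{L}^s \opsf = \ousf - \usf_d \text{ in } Q, \qquad \opsf(T)=0 \text{ in } \Omega.
\end{equation*}
By Remark~\ref{rm:cont_t} (and its analogue for the adjoint, which reduces to the state problem via the change of variables $t\mapsto T-t$ indicated after Definition~\ref{def:fractional_adjoint}), both $\usf-\ousf$ and $\opsf$ belong to $\mathbb{W}\cap C([0,T];L^2(\Omega))$, so Lemma~\ref{lem:ibp} applies.

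The first step is to compute, using the adjoint equation,
\begin{equation*}
(\ousf - \usf_d, \usf - \ousf)_{L^2(Q)} = \int_0^T \langle \partial_{T-t}^{\gamma}\opsf, \usf-\ousf\rangle \diff t + \int_0^T \langle \mathcal{L}^s\opsf, \usf-\ousf\rangle \diff t.
\end{equation*}
The second step is to apply Lemma~\ref{lem:ibp} with $\vsf = \usf-\ousf$ and $\wsf = \opsf$: since $(\usf-\ousf)(0)=0$ and $\opsf(T)=0$, both boundary terms vanish, giving
\begin{equation*}
\int_0^T \langle \partial_{T-t}^{\gamma}\opsf, \usf-\ousf\rangle \diff t = \int_0^T \langle \partial_t^{\gamma}(\usf-\ousf), \opsf\rangle \diff t.
\end{equation*}
The third step uses the self-adjointness of $\mathcal{L}^s:\Hs\to\Hsd$ (which follows from \eqref{def:second_frac} and the orthonormality of $\{\varphi_k\}$) to obtain
\begin{equation*}
\int_0^T \langle \mathcal{L}^s\opsf, \usf-\ousf\rangle \diff t = \int_0^T \langle \mathcal{L}^s(\usf-\ousf), \opsf\rangle \diff t.
\end{equation*}
Adding these identities and using the equation satisfied by $\usf-\ousf$ yields
\begin{equation*}
(\ousf - \usf_d, \usf - \ousf)_{L^2(Q)} = \int_0^T \langle \partial_t^{\gamma}(\usf-\ousf)+\mathcal{L}^s(\usf-\ousf), \opsf\rangle \diff t = (\opsf, \zsf - \ozsf)_{L^2(Q)},
\end{equation*}
which is \eqref{identity}.

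\medskip
\noindent\textbf{Main obstacle.} The only nontrivial point is justifying that the boundary contributions in the fractional integration by parts vanish. This requires the continuity-in-time statement of Remark~\ref{rm:cont_t} for both the state and the adjoint; for the adjoint this is not stated directly but follows, as noted after Definition~\ref{def:fractional_adjoint}, by reformulating \eqref{fractional_adjoint} as a forward problem of the type \eqref{fractional_heat} via $t\mapsto T-t$ and invoking \S\ref{sub:stateequation}. With this in hand, the point values $\opsf(T)$ and $(\usf-\ousf)(0)$ make sense in $L^2(\Omega)$ and are zero, and since $I_t^{1-\gamma}$, $I_{T-t}^{1-\gamma}$ are continuous on $C([0,T])$ by Proposition~\ref{pro:continuity}, the two boundary terms in Lemma~\ref{lem:ibp} vanish.
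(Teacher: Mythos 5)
Your proposal is correct and follows essentially the same route as the paper's proof: test the adjoint equation against $\usf-\ousf$, test the state-difference equation against $\opsf$, transfer the fractional time derivative via Lemma~\ref{lem:ibp} with the boundary terms vanishing because $(\usf-\ousf)(0)=0$ and $\opsf(T)=0$ (justified by Remark~\ref{rm:cont_t}), and use self-adjointness of $\mathcal{L}^s$. The only difference is cosmetic: the paper starts from $(\opsf,\zsf-\ozsf)_{L^2(Q)}$ and works toward $(\ousf-\usf_d,\usf-\ousf)_{L^2(Q)}$, whereas you argue in the opposite direction.
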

\begin{proof}
Define $\phi:= \usf - \ousf \in \mathbb{W}$ and notice that $\phi(0) = 0$ in $\Omega$. Moreover
\begin{equation}
\label{phi}
    \langle \partial^{\gamma}_t \phi + \mathcal{L}^s \phi, w \rangle = (\zsf - \ozsf, w)_{L^2(\Omega)} 
    \qquad \forall w \in \Hs, \ \text{a.e. } (0,T).
\end{equation}
Since $\opsf \in \mathbb{W}$ setting $w = \opsf(t)$ in \eqref{phi} and integrating over time yields
\[
 ( \opsf, \zsf - \ozsf)_{L^2(Q)} 
 = \int_{0}^T \langle \partial^{\gamma}_t \phi  + \mathcal{L}^s \phi , \opsf \rangle.
\]
Lemma~\ref{lem:ibp} and the fact that the operator $\mathcal{L}^s$ is self adjoint allow us to write
\[
 ( \opsf, \zsf - \ozsf)_{L^2(Q)} 
 = \int_{0}^{T} \langle \partial^{\gamma}_{T-t} \psf + \mathcal{L}^s \opsf , \phi \rangle,
\]
where we used the terminal and initial conditions $\opsf(T)=0$ and $\phi(0)=0$, respectively, which are well defined in view of
Remark~\ref{rm:cont_t}. On the other hand, setting $\phi$ as test function in the weak version of \eqref{fractional_adjoint} and integrating in time yields
\[
 \int_{0}^T \langle \partial^{\gamma}_{T-t} \opsf  + \mathcal{L}^s \opsf , \phi \rangle = (\ousf-\usf_d,\phi)_{ L^2(Q)}.
\]
The desired identity \eqref{identity} follows easily from the derived expressions.
\end{proof}

We now prove necessary and sufficient optimality conditions for \eqref{Jintro}--\eqref{cc}.

\begin{theorem}[first-order optimality conditions]
\label{TH:fractional_fooc}
$\ozsf \in \Zad$ is the optimal control of problem \eqref{Jintro}--\eqref{cc} if and only if it solves
\eqref{viz}, where $\opsf = \opsf(\ozsf)$ solves \eqref{fractional_adjoint}.
\end{theorem}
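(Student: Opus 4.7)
The plan is to combine the abstract variational characterization of minimizers (the preceding lemma) with the duality identity already established in Lemma~\ref{le:identity}, which transfers the state sensitivity onto the adjoint state. This is the standard chain for linear-quadratic optimal control, and all the hard work (time-fractional integration by parts) has been absorbed into Lemma~\ref{le:identity}.

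First I would reduce to the unconstrained form: by Theorem~\ref{TH:optimal_control}, $\ozsf$ is optimal if and only if it minimizes $f$ of \eqref{f_opt} over $\Zad$. Since $\Zad$ is convex and $f$ is differentiable, the preceding Lemma on variational inequalities gives the characterization $(f'(\ozsf), \zsf-\ozsf)_{L^2(Q)} \geq 0$ for every $\zsf \in \Zad$. Next I compute $f'(\ozsf)$ explicitly. Because $\mathbf{S}$ is affine and continuous from $L^2(Q)$ into itself, its differential at $\ozsf$ in direction $\zsf - \ozsf$ is exactly $\usf - \ousf = \mathbf{S}\zsf - \mathbf{S}\ozsf$. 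The chain rule for $f$ therefore yields
\[
(f'(\ozsf), \zsf - \ozsf)_{L^2(Q)} = (\ousf - \usf_d, \usf - \ousf)_{L^2(Q)} + \mu\,(\ozsf, \zsf - \ozsf)_{L^2(Q)}.
\]

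Then I invoke Lemma~\ref{le:identity} to rewrite the first term on the right-hand side as $(\opsf, \zsf - \ozsf)_{L^2(Q)}$, where $\opsf \in \mathbb{W}$ is the fractional adjoint state associated with $\ozsf$ through \eqref{fractional_adjoint}. Substituting, the variational inequality $(f'(\ozsf), \zsf - \ozsf)_{L^2(Q)} \geq 0$ becomes exactly
\[
(\mu \ozsf + \opsf, \zsf - \ozsf)_{L^2(Q)} \geq 0 \qquad \forall \zsf \in \Zad,
\]
which is \eqref{viz}. For the converse, strict convexity of $f$ (already used in Theorem~\ref{TH:optimal_control}) guarantees that the variational inequality is not only necessary but also sufficient for optimality.

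I do not expect any real obstacles: the potentially delicate point, namely justifying the fractional integration by parts that transfers $\partial_t^\gamma$ from the state sensitivity $\phi = \usf - \ousf$ onto $\opsf$, has already been carried out in Lemma~\ref{le:identity} via Lemma~\ref{lem:ibp} and Remark~\ref{rm:cont_t} (the latter securing $\phi(0)=0$ and $\opsf(T)=0$ in the $L^2(\Omega)$-sense). The only thing to watch is that $\mathbf{S}$ is merely affine, not linear; this is handled by the observation above that $\mathbf{S}\zsf - \mathbf{S}\ozsf$ solves the homogeneous state equation with source $\zsf - \ozsf$ and zero initial datum, so the nonhomogeneous data $\fsf$ and $\usf_0$ drop out of the differential.
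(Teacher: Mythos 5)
Your proposal is correct and follows essentially the same route as the paper: reduce to the reduced functional $f$, use the variational inequality lemma, exploit the affine structure of $\mathbf{S}$ so that the derivative in the direction $\zsf-\ozsf$ is $\usf-\ousf$ (the paper writes this via the splitting $\mathbf{S}=\mathbf{S}_0+\psi_0$), and then apply Lemma~\ref{le:identity} to replace $(\ousf-\usf_d,\usf-\ousf)_{L^2(Q)}$ by $(\opsf,\zsf-\ozsf)_{L^2(Q)}$. The converse direction is already contained in the ``if and only if'' of the variational inequality lemma (convexity of $f$), so your appeal to strict convexity is harmless but not needed beyond that.
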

\begin{proof}
We recall the control to state operator $\mathbf{S}:L^2(0,T,\Hsd) \rightarrow \mathbb{W}$ defined by $\mathbf{S}(\zsf) = \usf(\zsf)$, where $\usf(\zsf) \in \mathbb{W}$ solves problem \eqref{fractional_heat}. Next, we write $\mathbf{S}(\zsf) = \mathbf{S}_0(\zsf) + \psi_0$, where $\mathbf{S}_0(\zsf)$ denotes the solution to \eqref{fractional_heat} with $\fsf=0$ and $\usf_0 = 0$, while $\psi_0$ solves \eqref{fractional_heat} with $\zsf = 0$.
Since $\mathbf{S}_0$ is linear, in our setting the variational inequality \eqref{vi} reads 
\[
( \ousf - \usf_{d}, \mathbf{S}_0(\zsf - \ozsf) )_{L^2(Q)} + \mu ( \ozsf,  \zsf - \ozsf )_{L^2(Q)} \geq 0.
\]
Since $\mathbf{S}_0(\zsf - \ozsf) = \mathbf{S}_0 \zsf + \psi_0 - (\psi_0 + \mathbf{S}_0 \ozsf) = \usf(\zsf) - \ousf$,
the previous expression becomes
\[
( \ousf - \usf_{d}, \usf(\zsf) - \ousf )_{L^2(Q)} + \mu ( \ozsf,  \zsf - \ozsf )_{L^2(Q)} \geq 0.
\]
Using identity \eqref{identity} of Lemma \ref{le:identity}, we arrive at
\begin{align*}
( \opsf,  \zsf - \ozsf )_{L^2(Q)} + \mu ( \ozsf,  \zsf - \ozsf )_{L^2(Q)} \geq 0,
\end{align*}
which is \eqref{viz} and concludes the proof.
\end{proof}

\subsection{Regularity of the optimal control}
\label{sub:regularity}
Since we shall be concerned with approximating the solution to the control problem \eqref{Jintro}--\eqref{cc}, it is essential to study its regularity. Here, on the basis of a bootstrap argument, we obtain such results.

In what follows we will, without explicit mention, make the following regularity assumption concerning the domain $\Omega$:
\begin{equation}
\label{Omega_regular}
 \| w \|_{H^2(\Omega)} \lesssim \| \mathcal{L} w \|_{L^2(\Omega)}, \quad \forall w \in H^2(\Omega) \cap H^1_0(\Omega), 
\end{equation}
which is valid, for instance, if the domain $\Omega$ is convex \cite{Grisvard}. In addition, we will need the following assumption on $\asf$ and $\bsf$ defining the set $\Zad$:
\begin{equation}
\label{ab_condition}
\asf \leq 0 \leq \bsf \textrm{ on } \partial \Omega \times (0,T).
\end{equation}

\begin{theorem}[regularity of $\ozsf$]
\label{th:regofz}
Let $\gamma = 1$.
Assume that, for every $\epsilon>0$, we have $\fsf, \usf_d \in L^2(0,T;\mathbb{H}^{1-\epsilon}(\Omega))$ and that $\usf_0 \in \mathbb{H}^{1-\epsilon}(\Omega)$. If $\asf,\bsf \in H^1(Q)$ and \eqref{ab_condition} holds
% \begin{enumerate}[(a)]
% \item $s \in [\tfrac12,1)$ and $\asf,\bsf \in H^1(Q)$, or
% \item $s \in (0,\tfrac12)$, $\asf,\bsf \in H^1(Q)$ and ,
% \end{enumerate}
then $\ozsf$, the solution to the optimal control problem \eqref{Jintro}--\eqref{cc}, satisfies
\begin{align*}
  \| \ozsf \|_{L^2(0,T;H^1(\Omega))} + \| \ozsf \|_{H^1(0,T;L^2(\Omega))} &\lesssim 
  \| \fsf \|_{L^2(0,T;\mathbb{H}^{1-\epsilon}(\Omega))} \\
  &+ \| \usf_d \|_{L^2(0,T;\mathbb{H}^{1-\epsilon}(\Omega))} + \| \usf_0 \|_{\mathbb{H}^{1-\epsilon}(\Omega)},
\end{align*}
where the hidden constant does not depend on the problem data. Moreover, $\ozsf \in L^2(0,T;H_0^1(\Omega))$.
\end{theorem}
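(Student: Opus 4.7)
The plan is to derive $\ozsf$ from the variational inequality \eqref{viz} via a pointwise projection formula, and then to bootstrap the spatial regularity of the adjoint state while extracting its time derivative directly from the adjoint equation. Since $\mu>0$, \eqref{viz} is equivalent to the characterization
$$\ozsf(x',t) = \Pi_{[\asf(x',t),\bsf(x',t)]}\!\bigl(-\mu^{-1}\opsf(x',t)\bigr), \qquad \text{a.e.~in } Q,$$
where $\Pi_{[\asf,\bsf]}(v) = \max\{\asf,\min\{\bsf,v\}\}$ is the pointwise truncation. Being a Lipschitz superposition, $\Pi_{[\asf,\bsf]}$ preserves $L^2$-differentiability (in space and in time) when $\asf,\bsf \in H^1(Q)$, so the statement reduces to proving
$$\opsf \in L^2(0,T;H^1(\Omega)) \cap H^1(0,T;L^2(\Omega))$$
with the stated data-dependent bound. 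The sign condition \eqref{ab_condition} moreover forces $\ozsf=0$ on $\partial\Omega\times(0,T)$, which upgrades the spatial conclusion to $H^1_0(\Omega)$.

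For the spatial regularity I would run a finite bootstrap that gains $2s$ derivatives per iteration. Starting from $\ozsf\in L^2(Q)\hookrightarrow L^2(0,T;\Hsd)$, the energy estimate \eqref{energy_u} yields $\ousf\in L^2(0,T;\Hs)$. The adjoint for $\gamma=1$ reads $-\partial_t\opsf+\calLs\opsf=\ousf-\usf_d$ with $\opsf(T)=0$; the time reversal $\tilde\psf(t)=\opsf(T-t)$ from the computation preceding Definition~\ref{def:fractional_adjoint} converts it into a standard forward parabolic problem with homogeneous initial datum, to which the same estimate applies. Combining the spectral definition \eqref{def:second_frac} with the elliptic regularity \eqref{Omega_regular}, $\calLs$ realizes an isomorphism $\mathbb{H}^{\sigma+2s}(\Omega)\to\mathbb{H}^{\sigma}(\Omega)$ on the admissible range, so each application of the parabolic estimate raises spatial smoothness by $2s$. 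Plugging the improved $\opsf$ back into the projection formula lifts $\ozsf$, which in turn lifts $\ousf$, and the cycle repeats. Since $\fsf,\usf_d\in L^2(0,T;\mathbb{H}^{1-\epsilon}(\Omega))$ and $\usf_0\in\mathbb{H}^{1-\epsilon}(\Omega)$, a finite number of iterations (depending only on $s$ and $\epsilon$) suffices to reach $\opsf\in L^2(0,T;H^1(\Omega))$ with a bound of the desired form.

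The step I expect to require the most care is the time regularity $\partial_t\opsf\in L^2(Q)$. I would extract it by reading the adjoint equation as $-\partial_t\opsf=(\ousf-\usf_d)-\calLs\opsf$ and showing that both terms on the right lie in $L^2(Q)$. The first is in $L^2(Q)$ by construction; the second requires $\opsf\in L^2(0,T;\mathbb{H}^{2s}(\Omega))$, which is produced at the appropriate stage of the bootstrap above and can equivalently be viewed as $L^2$-maximal parabolic regularity for the reversed problem (the terminal condition $\opsf(T)=0$ becomes a homogeneous initial datum for $\tilde\psf$, which avoids the usual compatibility obstruction). Once $\partial_t\opsf\in L^2(Q)$ is in hand, the Lipschitz truncation transfers it to $\partial_t\ozsf\in L^2(Q)$ by means of $\partial_t\asf,\partial_t\bsf\in L^2(Q)$.

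Chaining the stability constants through each iteration of the bootstrap, through the two parabolic estimates, and through the Lipschitz projection yields the quantitative bound in terms of the given norms of $\fsf$, $\usf_d$, and $\usf_0$; the sign condition \eqref{ab_condition} supplies the zero trace and completes the argument. The principal obstacle is bookkeeping: one must verify at each iteration that the state, the adjoint, and the control all stay within the Sobolev scale allowed by \eqref{Omega_regular} and by the endpoint regularity $\mathbb{H}^{1-\epsilon}$ of the data, and that the two parabolic estimates (one for $\ousf$, one for $\tilde\psf$) deliver matching orders of smoothness when assembled with the projection formula.
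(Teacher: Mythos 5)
Your proposal follows essentially the same route as the paper's proof: the pointwise projection formula \eqref{eq:projformula}, a bootstrap that alternates between the state and the (time-reversed) adjoint using the mapping properties of $\calLs$ under \eqref{Omega_regular} together with parabolic regularity for $\gamma=1$, and condition \eqref{ab_condition} to retain the homogeneous boundary values; the extraction of $\partial_t\opsf$ from the adjoint equation and its transfer to $\partial_t\ozsf$ is also how the paper obtains $\ozsf\in H^1(0,T;L^2(\Omega))$. The one step you under-justify is the transfer of the \emph{intermediate, fractional} regularity through the projection: for $s<\tfrac12$ the bootstrap needs $\opsf\in L^2(0,T;\mathbb{H}^{2s}(\Omega))$ (then $\mathbb{H}^{3s}(\Omega)$, and so on) to yield $\ozsf$ in the same spaces, and your justification --- that a Lipschitz superposition preserves ``$L^2$-differentiability'' --- only covers integer-order ($H^1$) regularity. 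Since the truncation is a \emph{nonlinear} superposition operator, its boundedness on the fractional scale does not follow from linear interpolation; the paper invokes a nonlinear operator interpolation argument (in the spirit of Lemma 4.9 of its reference on the elliptic control problem) at exactly this point, and it also uses \eqref{ab_condition} at each intermediate stage, not only at the end, so that the truncated function lands in $\mathbb{H}^{\sigma}(\Omega)$, which encodes the zero trace once $\sigma>\tfrac12$. Apart from this, your bookkeeping differs only mildly from the paper's: you apply maximal regularity at every step, gaining $2s$ per iteration (which requires checking that $\usf_0$ lies in the corresponding trace space --- harmless here because $\epsilon>0$ is arbitrary), whereas the paper gains $2s$ once and then $s$ per iteration by applying the basic energy estimate to $w_3=\calLs\ousf$, $w_4=\calL^{s/2}w_3$, etc.; both variants terminate after finitely many steps and give the stated bound.
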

\begin{proof}
The proof is based on a bootstrap argument as in \cite[Lemma 4.9]{AO}, so we merely sketch it.
By assumption, the right hand side of the state equation \eqref{fractional_heat} satisfies $\fsf + \ozsf \in L^2(Q)$, while the initial condition satisfies $\usf_0 \in \Hs$. Standard regularity arguments yield that the solution verifies $\ousf \in H^1(0,T;L^2(\Omega)) \cap L^\infty(0,T;\Hs)$. In addition, by writing the problem as
\[
  \mathcal{L}^s \ousf = \fsf + \ozsf - \partial_t \ousf \in L^2(Q),
\]
we realize that $\ousf \in L^2(0,T; \mathbb{H}^{2s}(\Omega))$. The right hand side of \eqref{fractional_adjoint} verifies $\ousf - \usf_d \in L^2(Q)$ so $\opsf$ has the same regularity that $\ousf$ possesses. From \cite[\S3.6.3]{Tbook} we have
\begin{equation}
\label{eq:projformula}
  \ozsf = \max \left\{ \asf, \min\left\{ \bsf, -\frac1\mu \opsf\right\} \right\}.
\end{equation}
This immediately yields $\ozsf \in H^1(0,T;L^2(\Omega))$.

To obtain the claimed space regularity we recall that $\opsf \in L^2(0,T;\mathbb{H}^{2s}(\Omega))$ and consider two cases:

\noindent \framebox{1} $s \in [\tfrac12,1)$: Since $\opsf \in L^2(0,T;H^1_0(\Omega))$, formula \eqref{eq:projformula} yields $\ozsf \in L^2(0,T;H^1_0(\Omega))$. Notice that assumption \eqref{ab_condition} is needed here to preserve the boundary values.

\noindent \framebox{2} $s \in (0,\tfrac12)$: We now begin the bootstrapping argument. A nonlinear operator interpolation argument as in \cite[Lemma 4.9]{AO} yields that $\ozsf \in L^2(0,T;H^{2s}(\Omega))$. From this and condition \eqref{ab_condition} we conclude that $\ozsf \in L^2(0,T; \mathbb{H}^{2s}(\Omega))$. Define $w_3 = \mathcal{L}^s \ousf$ and notice that, since $\fsf + \ozsf \in L^2(0,T;\mathbb{H}^{2s}(\Omega))$, $w_3$ solves $\partial_t w_3 + \mathcal{L}^s w_3 = \mathcal{L}^s(\fsf + \ozsf) \in L^2(Q)$ with initial condition $w_3(0) = \mathcal{L}^s \usf_0 \in L^2(\Omega)$. Therefore $w_3 \in L^2(0,T;\Hs)$, which implies $\ousf = \mathcal{L}^{-s} w_3 \in L^2(0,T;\mathbb{H}^{3s}(\Omega))$. Define now $q_3 = \calLs \opsf$ and notice that the same arguments yield that $q_3 \in L^2(0,T;\Hs)$ and, therefore $\opsf \in L^2(0,T;\mathbb{H}^{3s}(\Omega))$. We consider, again, two cases:

\noindent \framebox{2.1} $s \in [\tfrac13, \tfrac12)$: As in step \framebox{1} we have that $\ozsf \in L^2(0,T;H_0^1(\Omega))$.

\noindent \framebox{2.2} $s \in (0, \tfrac13)$: Nonlinear operator interpolation and \eqref{ab_condition}, again, give us that $\ozsf \in L^2(0,T;\mathbb{H}^{3s}(\Omega))$. Define now $w_4 = \calL^{s/2} w_3$, which solves $\partial_t w_4 + \calLs w_4 = \calL^{3s/2}(\fsf + \ozsf) \in L^2(Q)$ with $w_4(0) = \calL^{3s/2}\usf_0 \in L^2(\Omega)$. This again yields that $\ousf, \opsf \in L^2(0,T;\mathbb{H}^{4s}(\Omega))$. We consider, one more time, two cases:

\noindent \framebox{2.2.1} $s \in [\frac14, \tfrac13)$: In this case $\ozsf \in L^2(0,T;H_0^1(\Omega))$.

\noindent \framebox{2.2.2} $s \in (0,\tfrac14)$: Define $w_5 = \calL^{s/2} w_4$ and argue as before.

Proceeding in this way we can conclude, after a finite number of steps, that for any $s \in (0,\tfrac{1}{2})$ we have $\ozsf \in L^2(0,T;H_0^1(\Omega))$. This concludes the proof.
\end{proof}

\begin{remark}[regularity of $\ousf$ and $\opsf$] \rm
\label{rk:reg_states}
Notice that while proving Theorem~\ref{th:regofz} we have also shown that $\ousf,\opsf \in H^1(0,T;L^2(\Omega)) \cap L^2(0,T;H^1_0(\Omega))$.
\end{remark}

\subsection{The extended control problem}
\label{sub:extcontrol}
To circumvent the nonlocality of the operator $\mathcal{L}^s$ in problem \eqref{Jintro}--\eqref{cc} we realize it using the Caffarelli-Silvestre extension. In what follows we consider the \emph{equivalent} problem: Find $\min\{ J(\tr \ue,\zsf) : \ue \in \mathbb{V}, \zsf \in \Zad \}$ subject to the \emph{extended state equation}: Find $\ue \in \mathbb{V}$ such that $\tr \ue(0) = \usf_0$ in $\Omega$ and, for a.e.~$t \in (0,T)$,
\begin{equation}
\label{state_equation_extended}
\langle \tr  \partial_t^{\gamma} \ue, \tr \phi \rangle + a(\ue,\phi) 
  = \langle \fsf + \zsf, \tr \phi \rangle \qquad \forall \phi \in \HL(y^{\alpha},\C).
\end{equation}

To describe the optimality conditions we introduce the \emph{extended adjoint problem}: Find $\pe \in \mathbb{V}$ such that $\tr \pe(T)=0$ in $\Omega$ and, for a.e.~$t \in (0,T)$,
\begin{equation}
\label{extended_adjoint}
\langle \tr  \partial_{T-t}^{\gamma} \pe, \tr \phi \rangle + a(\pe,\phi)  =  ( \tr \ue - \usf_d , \tr \phi )_{L^2(\Omega)}, \quad 
\forall \phi \in \HL(y^{\alpha},\C).
\end{equation}
The optimality conditions in this setting now read as follows: the pair $(\bar{\ue}(\ozsf),\ozsf) \in \mathbb{V}\times \Zad$ is optimal if and only if $\bar{\ue}(\ozsf)$ solves \eqref{state_equation_extended} and
\begin{equation}
\label{op_extended}
  (\tr \bar{\pe} + \mu \ozsf , \zsf - \ozsf )_{L^2(Q)} \geq 0 \quad \forall \zsf \in \Zad,
\end{equation}
where $\bar\pe = \bar\pe(\ozsf) \in \mathbb{V}$ solves \eqref{extended_adjoint}.

\section{A truncated optimal control problem}
\label{sec:control_truncated}

The state equation \eqref{state_equation_extended} is posed on the infinite cylinder $\C = \Omega \times (0,\infty)$,
therefore it cannot be directly approximated with finite element-like techniques. The first step towards discretization is to truncate $\C$ to a bounded cylinder $\C_{\Y}= \Omega \times (0,\Y)$, which is possible because the energy decreases exponentially in $\Y$; see \cite[Proposition 4.1]{NOS3} for details. 

\begin{proposition}[exponential decay]
\label{pro:energyYinf}
If, for a given $\gamma \in (0,1]$ and $s \in (0,1)$, $\ue = \ue(\zsf) \in \mathbb{V}$ solves \eqref{state_equation_extended}, then for every $\Y > 1$ we have
\begin{equation}
\label{energyYinf}
\|\nabla \ue \|_{L^2( 0,T; L^2(y^\alpha,\Omega \times (\Y,\infty))) } \lesssim e^{-\sqrt{\lambda_1} \Y/2}
\Lambda_{\gamma}(\usf_0,\fsf + \zsf),
\end{equation}
where $\Lambda_{\gamma}$ is defined in \eqref{Lambda}.
\end{proposition}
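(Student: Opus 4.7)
The plan is to exploit the spectral decomposition of $\mathcal{L}$ to reduce the problem to a one-dimensional analysis in the extended variable $y$, where the modified Bessel functions governing the vertical profile decay exponentially. Specifically, using \eqref{eigenvalue_problem_L}, I would write the solution to \eqref{state_equation_extended} as the separation-of-variables expansion
\[
\ue(x',y,t) = \sum_{k=1}^\infty \usf_k(t)\,\varphi_k(x')\,\psi_k(y),
\qquad \usf_k(t) = (\tr \ue(\cdot,t),\varphi_k)_{L^2(\Omega)},
\]
where each profile $\psi_k$ solves the boundary value problem
\[
-\bigl(y^\alpha \psi_k'(y)\bigr)' + \lambda_k\, y^\alpha \psi_k(y) = 0 \text{ in } (0,\infty),
\qquad \psi_k(0)=1, \qquad \lim_{y\to\infty}\psi_k(y)=0,
\]
that results from separating variables in the elliptic equation of \eqref{heat_alpha_extension}. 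In closed form, $\psi_k(y) = c_s(\sqrt{\lambda_k} y)^{s} K_s(\sqrt{\lambda_k}y)$, with $K_s$ the modified Bessel function of the second kind.

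The second step is a mode-by-mode tail estimate. Using the asymptotic behavior $K_s(z)\sim \sqrt{\pi/(2z)}\,e^{-z}$ as $z\to\infty$, together with the recurrence relations relating $K_s$, $K_{s\pm 1}$ to $\psi_k'$, I would establish, uniformly in $k$ and for $\Y>1$,
\[
\int_\Y^\infty y^\alpha\Bigl(|\psi_k'(y)|^2 + \lambda_k |\psi_k(y)|^2\Bigr)\diff y \lesssim \lambda_k^{s}\, e^{-\sqrt{\lambda_k}\,\Y}.
\]
Since $\sqrt{\lambda_k}\ge \sqrt{\lambda_1}$ for every $k\ge 1$, the slowest-decaying factor $e^{-\sqrt{\lambda_1}\Y}$ dominates all modes.

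The third step is to assemble these pieces via Parseval. Orthogonality of $\{\varphi_k\}$ in $L^2(\Omega)$ yields, for a.e.\ $(y,t)$,
\[
\int_\Omega |\nabla \ue(x',y,t)|^2 \diff x' \lesssim \sum_{k=1}^\infty |\usf_k(t)|^2 \bigl(|\psi_k'(y)|^2 + \lambda_k |\psi_k(y)|^2\bigr),
\]
where the implicit constant absorbs the ellipticity bounds on $\mathbf{A}$ and $c$. Multiplying by $y^\alpha$, integrating in $y\in(\Y,\infty)$ and $t\in(0,T)$, and inserting the previous tail estimate produces
\[
\|\nabla \ue\|^2_{L^2(0,T;L^2(y^\alpha,\Omega\times(\Y,\infty)))} \lesssim e^{-\sqrt{\lambda_1}\,\Y} \sum_{k=1}^\infty \lambda_k^s \int_0^T |\usf_k(t)|^2 \diff t = e^{-\sqrt{\lambda_1}\,\Y}\, \|\usf\|^2_{L^2(0,T;\Hs)}.
\]
The proof concludes by applying the energy estimate \eqref{stab_u} of Theorem~\ref{thm:exis_uniq}, which gives $\|\usf\|^2_{L^2(0,T;\Hs)}\lesssim \Lambda_\gamma^2(\usf_0,\fsf+\zsf)$, and taking square roots, picking up the factor $1/2$ in the exponent.

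The main obstacle is the $k$-uniform control of the weighted tail integral for $\psi_k$: one has to handle both $\psi_k$ and $\psi_k'$ simultaneously, and a naive use of the asymptotic formula for $K_s$ near infinity must be combined with a careful change of variables $z=\sqrt{\lambda_k}y$ so that the $\lambda_k$-dependence factors out cleanly as $\lambda_k^s e^{-\sqrt{\lambda_k}\Y}$. Once this Bessel-function estimate is in place, the remainder of the argument is a routine Parseval computation combined with the already-established energy bound.
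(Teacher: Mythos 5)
Your proposal is correct and follows essentially the same route as the paper, which does not reprove the result but defers to \cite[Proposition 4.1]{NOS3}: there the bound is obtained exactly as you describe, via the representation $\ue(\cdot,\cdot,t)=\sum_k \usf_k(t)\varphi_k\psi_k(y)$ with $\psi_k(y)=c_s(\sqrt{\lambda_k}y)^sK_s(\sqrt{\lambda_k}y)$, the uniform tail estimate $\int_\Y^\infty y^\alpha(|\psi_k'|^2+\lambda_k|\psi_k|^2)\diff y\lesssim \lambda_k^s e^{-\sqrt{\lambda_k}\Y}$ from the asymptotics of $K_s$, and then the energy estimate \eqref{stab_u}. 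Your handling of the general operator $\mathcal{L}$ (orthogonality in the $\mathcal{L}$-energy product plus uniform ellipticity of $\mathbf{A}$, $c\geq 0$) is the right way to justify the Parseval step, so no gaps remain.
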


Proposition~\ref{pro:energyYinf} motivates a \emph{truncated control problem} as follows. We first define 
\begin{align*}
\HL(y^{\alpha},\C_\Y) &= \left\{ w \in H^1(y^\alpha,\C_\Y): w = 0 \text{ on } \partial_L \C_\Y \cup \Omega \times \{ \Y\} \right\},\\
\mathbb{V}_{\Y} &= \{ w \in L^2(0,T;\HL(y^{\alpha},\C_{\Y})): \partial_t^{\gamma} \tr w  \in L^2(0,T;\Hsd)\}, 
\end{align*}
and, for $w, \phi \in \HL(y^{\alpha},\C_\Y)$, the bilinear form
\begin{equation}
\label{a_Y}
  a_\Y(w,\phi) =   \frac{1}{d_s}\int_{\C_\Y} y^{\alpha} \left( \mathbf{A}(x') \nabla w \cdot \nabla \phi
 + c(x') w \phi \right) \diff x' \diff y.
\end{equation}
We define the \emph{truncated control problem} as: Find $\min \{J(\tr v,\rsf): v \in \mathbb{V}_\Y, \rsf \in \Zad\}$, subject to the \emph{truncated state equation}: Find $v \in \mathbb{V}_{\Y}$ with $\tr v(0) = \usf_0$ in $\Omega$ and
\begin{equation}
\label{state_equation_truncated}
\langle \tr  \partial_t^{\gamma} v, \tr \phi \rangle + a_{\Y}(v,\phi)   = \langle \fsf + \rsf, \tr \phi \rangle, 
\quad \forall \phi \in \HL(y^\alpha,\C_\Y),
\quad a.e.~t \in (0,T).
\end{equation}

As an instrument we define $\mathcal{H}_\alpha: \Hs \rightarrow \HL(y^{\alpha},\C_{\Y})$, the $\alpha$-harmonic extension to $\C_\Y$, \ie if $\wsf \in \Hs$, then $w = \mathcal{H}_\alpha \wsf$ solves
\begin{equation}
\label{eq:mathcalH}
\DIV(y^{\alpha} \nabla w) = 0 \text{ in } \C_\Y, 
\quad
w = 0 \text{ on } \partial_L \C_\Y \cup \Omega \times \{\Y\} , 
\quad
w = \wsf \text{ on } \Omega \times \{0\}.
\end{equation}

\begin{remark}[initial datum]\rm
\label{rm:initial_datum}
The initial datum $\usf_0$ of \eqref{fractional_heat} determines $v(0)$ only on $\Omega \times \{ 0\}$ in a trace sense. We thus define $v(0) = \mathcal{H}_\alpha \usf_0$. Remark 3.4 in \cite{NOS} provides the estimate $\| \nabla v(0) \|_{L^2(y^{\alpha},\C_\Y)} \lesssim \| \usf_0 \|_{\Hs}$.
\end{remark}

Let us now provide, for $\gamma=1$, an energy estimate that will be useful to derive an $L^2(Q)$ error estimate for the fully-discrete scheme of \S\ref{sec:fully_scheme}.

\begin{theorem}[energy estimate: $\gamma = 1$] 
\label{TH:stab_v_1}
Let $s \in (0,1)$ and $\gamma = 1$ and denote by $v \in \V_{\Y}$ the solution to \eqref{state_equation_truncated}. If $\fsf$ and $\rsf$ belong to $L^2(Q)$ and $\usf_0 \in \Hs$, then
\begin{equation}
\label{eq:energy_v}
\| \tr \partial_t v \|_{L^2(Q)} + \| \nabla v \|_{L^\infty(0,T;L^2(y^{\alpha},\C_\Y))} \lesssim
\| \fsf + \rsf \|_{L^2(Q)} + \| \usf_0\|_{\Hs},
\end{equation}
where the hidden constant does not depend on $v$ nor the problem data.
\end{theorem}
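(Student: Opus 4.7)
The plan is to use $\partial_t v$ as a test function in the state equation \eqref{state_equation_truncated} (for $\gamma=1$), exploit the symmetry of $a_\Y$, and then close the estimate with Young's inequality together with the initial datum bound from Remark~\ref{rm:initial_datum}.

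First I would note that, since $\mathbf{A}$ is symmetric and $c$ is a scalar coefficient, the bilinear form $a_\Y$ defined in \eqref{a_Y} is symmetric. Consequently, for smooth enough $v$,
\[
a_\Y(v,\partial_t v) = \tfrac12 \tfrac{\diff}{\diff t} a_\Y(v,v).
\]
Under the hypotheses $\fsf + \rsf \in L^2(Q)$ and $\usf_0 \in \Hs$, the formal argument of setting $\phi = \partial_t v$ in \eqref{state_equation_truncated} yields, for a.e.\ $t \in (0,T)$,
\[
\| \tr \partial_t v(t) \|_{L^2(\Omega)}^2 + \tfrac12 \tfrac{\diff}{\diff t} a_\Y(v(t),v(t)) = (\fsf(t) + \rsf(t), \tr \partial_t v(t))_{L^2(\Omega)}.
\]
Integrating from $0$ to an arbitrary $\tau \in (0,T]$ and applying Cauchy--Schwarz and Young's inequality on the right-hand side leads to
\[
\tfrac12 \| \tr \partial_t v \|_{L^2(\Omega \times (0,\tau))}^2 + \tfrac12 a_\Y(v(\tau),v(\tau)) \le \tfrac12 \| \fsf + \rsf \|_{L^2(Q)}^2 + \tfrac12 a_\Y(v(0),v(0)).
\]

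Next I would invoke the coercivity of $a_\Y$ on $\HL(y^\alpha,\C_\Y)$ (which gives $\|\nabla v(\tau)\|_{L^2(y^\alpha,\C_\Y)}^2 \lesssim a_\Y(v(\tau),v(\tau))$) together with Remark~\ref{rm:initial_datum}, which provides $a_\Y(v(0),v(0)) \lesssim \|\nabla v(0)\|_{L^2(y^\alpha,\C_\Y)}^2 \lesssim \|\usf_0\|_{\Hs}^2$. Taking the supremum over $\tau \in (0,T]$ and combining both terms on the left yields \eqref{eq:energy_v}.

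The main obstacle is rigorously justifying the choice $\phi = \partial_t v$, since $\partial_t v$ a priori only has trace in $L^2(0,T;\Hsd)$ and is not guaranteed to lie in $L^2(0,T;\HL(y^\alpha,\C_\Y))$. I would overcome this by a standard Galerkin procedure: approximate $v$ by finite-dimensional approximations $v_N$ spanned by the eigenfunctions of $\mathcal{L}$ (extended harmonically into $\C_\Y$), for which $\partial_t v_N$ is an admissible test function. Perform the computation above at the Galerkin level, obtaining the uniform bound in $N$, and then pass to the limit $N \to \infty$ using weak and weak-$*$ compactness, with lower semicontinuity of the $L^2$ and $L^\infty$-in-time norms. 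The hidden constants depend only on the ellipticity of $\mathbf{A}$, the bound on $c$, and $d_s$, as required.
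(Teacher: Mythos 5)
Your proposal is correct and follows essentially the same route as the paper, whose proof is just the one-line sketch ``set $\phi = \partial_t v$ in \eqref{state_equation_truncated}, integrate over time and use Remark~\ref{rm:initial_datum}.'' Your fleshed-out computation (symmetry of $a_\Y$, Young's inequality, coercivity) and the Galerkin justification of the test-function choice simply supply details the paper leaves implicit.
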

\begin{proof}
Set $\phi = \partial_t v$ in \eqref{state_equation_truncated}, integrate over time and use 
the estimate of Remark~\ref{rm:initial_datum}: $\| \nabla v(0) \|_{L^2(y^{\alpha},\C_\Y)} \lesssim \| \usf_0 \|_{\Hs}$.
\end{proof}

As in \S\ref{sub:optim} we introduce the \emph{truncated adjoint problem}: Find $p \in \mathbb{V}_\Y$ such that $\tr p(T)=0$ and, for a.e.~$t \in (0,T)$,
\begin{equation}
\label{truncated_adjoint}
\langle \tr  \partial_{T-t}^{\gamma} p, \tr \phi \rangle 
+ a_{\Y}(p,\phi)  =  \langle \tr v - \usf_d , \tr \phi \rangle,
\quad \forall \phi \in \HL(y^{\alpha},\C_{\Y}).
\end{equation}

The same arguments provided in Theorem~\ref{TH:fractional_fooc} allow us to conclude that the pair $(\bar{v}(\orsf), \orsf) \in$ $\mathbb{V}_{\Y} \times \Zad$ is optimal if and only if $\bar{v}(\orsf)$ solves \eqref{state_equation_truncated} and $\orsf$ satisfies
\begin{equation}
\label{op_truncated}
  ( \tr \bar{p} + \mu \orsf , \rsf - \orsf )_{L^2(Q)} \geq 0 \quad \forall \rsf \in \Zad,
\end{equation}
where $\bar p = \bar p (\orsf) \in \mathbb{V}_\Y$ solves \eqref{truncated_adjoint}.

The next result shows how $(\bar{v}(\orsf),\orsf)$ approximates $(\bar{\ue}(\ozsf),\ozsf)$.

\begin{lemma}[exponential convergence]
\label{LE:exp_convergence}
For every $\Y \geq 1$ we have
\begin{equation}
\label{control_exp}
\| \orsf - \ozsf \|_{L^2(Q)} \lesssim  e^{-\frac{\sqrt{\lambda_1}}{2}\Y} \!
(\Lambda_{\gamma}(\usf_0,\fsf)  + \| \orsf \|_{L^2(0,T;\Hsd)} 
+\| \usf_d \|_{L^2(0,T;\Hsd)} ),
\end{equation}
and 
\begin{equation}
\label{state_exp}
\begin{aligned}
  \| \bar{\ue} - \bar{v} \|_{L^2(0,T;\HLn(y^\alpha,\C))}  &\lesssim e^{-\frac{\sqrt{\lambda_1}}{2}\Y}  ( \Lambda_{\gamma}(\usf_0,\fsf) \\
  &+ \| \orsf \|_{L^2(0,T;\Hsd)} +  \| \usf_d \|_{L^2(0,T;\Hsd)} 
),
\end{aligned}
\end{equation}
where $\Lambda_{\gamma}$ is defined in \eqref{Lambda}.
\end{lemma}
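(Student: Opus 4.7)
The plan is to combine the first-order optimality conditions \eqref{op_extended} and \eqref{op_truncated} with a truncation estimate for the backward-in-time adjoint equation. Testing \eqref{op_extended} with $\zsf = \orsf$ and \eqref{op_truncated} with $\rsf = \ozsf$ and adding the results gives
\[
\mu \|\orsf - \ozsf\|_{L^2(Q)}^2 \leq \big(\tr \bar{p} - \tr \bar{\pe},\, \ozsf - \orsf\big)_{L^2(Q)}.
\]
To extract useful information I introduce two auxiliary functions: let $\tilde v \in \mathbb{V}_\Y$ solve \eqref{state_equation_truncated} with control $\ozsf$, and $\tilde p \in \mathbb{V}_\Y$ solve \eqref{truncated_adjoint} with state $\tilde v$. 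Splitting $\tr \bar{p} - \tr \bar{\pe} = (\tr \bar{p} - \tr \tilde p) + (\tr \tilde p - \tr \bar{\pe})$, the first summand contributes a nonpositive quantity: testing the equation for $\bar{v} - \tilde v$ against $\bar{p} - \tilde p$ and the equation for $\bar{p} - \tilde p$ against $\bar{v} - \tilde v$, subtracting, and invoking Lemma~\ref{lem:ibp} (both boundary terms vanish since $\tr(\bar{v} - \tilde v)(0) = 0$ and $\tr(\bar{p} - \tilde p)(T) = 0$) yields
\[
(\tr \bar{p} - \tr \tilde p,\, \ozsf - \orsf)_{L^2(Q)} = -\|\tr(\bar{v} - \tilde v)\|_{L^2(Q)}^2 \leq 0.
\]
Cauchy--Schwarz on what remains then gives $\mu \|\orsf - \ozsf\|_{L^2(Q)} \leq \|\tr \tilde p - \tr \bar{\pe}\|_{L^2(Q)}$, reducing the problem to a pure truncation estimate for the adjoint with fixed control $\ozsf$.

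To bound $\|\tr \tilde p - \tr \bar{\pe}\|_{L^2(Q)}$, I decompose $\tilde p - \bar{\pe} = (\tilde p - q) + (q - \bar{\pe})$, where $q \in \mathbb{V}_\Y$ solves \eqref{truncated_adjoint} on $\C_\Y$ with right-hand side $\tr \bar{\ue} - \usf_d$ (the \emph{untruncated} state). The difference $\tilde p - q$ solves a truncated backward problem with zero terminal datum and forcing $\tr \tilde v - \tr \bar{\ue}$; an energy estimate together with the pure state truncation bound $\|\tilde v - \bar{\ue}\|_{L^2(0,T;\HLn(y^{\alpha},\C))} \lesssim e^{-\sqrt{\lambda_1}\Y/2}\Lambda_{\gamma}(\usf_0,\fsf+\ozsf)$ (the analogue of Proposition~\ref{pro:energyYinf}, cf.~\cite[Theorem 4.7]{NOS3}) delivers the required exponential decay. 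The difference $q - \bar{\pe}$ is the pure adjoint truncation error; the time-reversal $t \mapsto T - t$ (which converts $\partial_{T-t}^\gamma$ into $\partial_t^\gamma$, as noted after Definition~\ref{def:fractional_adjoint}) recasts this as a state truncation estimate with data $\tr \bar{\ue} - \usf_d$, giving a bound $\lesssim e^{-\sqrt{\lambda_1}\Y/2}(\|\tr \bar{\ue}\|_{L^2(0,T;\Hs)} + \|\usf_d\|_{L^2(0,T;\Hsd)})$. Combining these two pieces with the trace inequality \eqref{Trace_estimate}, the embedding $\Hs \hookrightarrow L^2(\Omega) \hookrightarrow \Hsd$, and the energy estimate \eqref{stab_ue} for $\bar{\ue}$ with control $\ozsf$, I then write $\Lambda_{\gamma}(\usf_0,\fsf+\ozsf) \leq \Lambda_{\gamma}(\usf_0,\fsf) + \|\orsf\|_{L^2(0,T;\Hsd)} + \|\orsf-\ozsf\|_{L^2(0,T;\Hsd)}$ and absorb the last term into the left-hand side (using $\|\cdot\|_{L^2(0,T;\Hsd)} \lesssim \|\cdot\|_{L^2(Q)}$ and, for small $\Y$, enlarging the constant) to conclude \eqref{control_exp}.

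For the state estimate \eqref{state_exp}, split $\bar{\ue} - \bar{v} = (\bar{\ue} - \tilde v) + (\tilde v - \bar{v})$, where $\tilde v - \bar{v}$ is extended by zero from $\C_\Y$ to $\C$ (permissible since both vanish at $y = \Y$). The first summand is the same pure state truncation error invoked above, controlled by $e^{-\sqrt{\lambda_1}\Y/2}\Lambda_{\gamma}(\usf_0,\fsf+\ozsf)$. For the second, $\tilde v - \bar{v}$ is the difference of two truncated states driven by controls $\ozsf$ and $\orsf$, so the stability estimate of Theorem~\ref{thm:exis_uniq} applied on $\C_\Y$ gives $\|\tilde v - \bar{v}\|_{L^2(0,T;\HLn(y^{\alpha},\C_\Y))} \lesssim \|\orsf-\ozsf\|_{L^2(0,T;\Hsd)}$, which is itself controlled by \eqref{control_exp}. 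Adding the two bounds gives \eqref{state_exp}. The main obstacle I anticipate is the cancellation argument in the first paragraph: correctly tracking which boundary contributions in Lemma~\ref{lem:ibp} vanish and maintaining a clean bookkeeping between the norms $L^2(Q)$, $L^2(0,T;\Hsd)$, and $L^2(0,T;\HLn(y^{\alpha},\C))$ when absorbing $\|\orsf-\ozsf\|$ and when passing between the full cylinder $\C$ and the truncated cylinder $\C_\Y$.
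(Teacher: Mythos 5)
Your argument is sound and, at the top level, mirrors the paper's: add the two variational inequalities, split the adjoint difference so that one piece is nonpositive by the duality/integration-by-parts argument of Lemma~\ref{lem:ibp} (your computation of that sign is correct), and control the remaining piece by exponential truncation estimates. The genuine difference is the choice of intermediates. The paper inserts the \emph{extended} state and adjoint evaluated at the \emph{truncated} optimal control, $\ue(\orsf)$ and $\pe(\orsf)$, so the nonpositive term is $(\tr(\bar{\pe}-\pe(\orsf)),\orsf-\ozsf)_{L^2(Q)}=-\|\tr(\bar\ue-\ue(\orsf))\|_{L^2(Q)}^2$, and the leftover $\|\tr(\pe(\orsf)-\bar p)\|_{L^2(Q)}$ is a pure truncation error driven by the data $(\usf_0,\fsf+\orsf,\usf_d)$, which is exactly why the right-hand side of \eqref{control_exp} carries $\|\orsf\|_{L^2(0,T;\Hsd)}$ with no further manipulation. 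You instead insert the \emph{truncated} state and adjoint at the \emph{extended} optimal control $\ozsf$ ($\tilde v$, $\tilde p$ on $\C_\Y$); this is equally legitimate, and your extra auxiliary $q$ (truncated adjoint fed with $\tr\bar\ue-\usf_d$) makes explicit a point the paper glosses over when it cites \cite[Lemma 4.3]{NOS3}, namely that the two adjoints differ both through the domain and through their forcing terms.

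The price of your orientation is the final bookkeeping: your truncation bounds come out with $\Lambda_\gamma(\usf_0,\fsf+\ozsf)$, and converting to the stated $\orsf$-dependent right-hand side forces the triangle inequality plus absorption of $Ce^{-\sqrt{\lambda_1}\Y/2}\|\orsf-\ozsf\|_{L^2(Q)}$ into $\mu\|\orsf-\ozsf\|_{L^2(Q)}$. That absorption only works once $Ce^{-\sqrt{\lambda_1}\Y/2}\le\mu/2$, i.e.\ for $\Y\ge\Y_0(\mu,C)$, whereas the lemma asserts the bound for every $\Y\ge1$; your remark ``for small $\Y$, enlarging the constant'' is not automatic, because on the range $1\le\Y\le\Y_0$ one would still need $\|\orsf-\ozsf\|_{L^2(Q)}$ bounded by the stated right-hand side, which requires an extra argument (e.g.\ boundedness of $\Zad$, at the cost of a constant depending on $\asf,\bsf$, or an a priori bound on $\|\ozsf\|_{L^2(Q)}$ via optimality). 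The paper's symmetric choice of intermediates avoids this issue entirely; alternatively, your proof becomes unconditional if you simply swap the roles and take $\tilde v,\tilde p$ driven by $\orsf$ on the full cylinder. Aside from this caveat, the estimate \eqref{state_exp} is handled correctly: your splitting through $\tilde v$ is the mirror image of the paper's splitting through $\ue(\orsf)$, and once \eqref{control_exp} is in hand the $\ozsf$-to-$\orsf$ conversion there is harmless.
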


\begin{proof} We proceed in four steps:

\noindent \boxed{1} Set $\zsf = \orsf \in \Zad$ and $\rsf = \ozsf \in \Zad$ in the variational inequalities \eqref{op_extended} and \eqref{op_truncated}, respectively, and add the obtained inequalities to arrive at
\begin{align*}	
  \mu \| \orsf - \ozsf \|^2_{L^2(Q)}  & \leq ( \tr (\bar{\pe} -  \bar{p}), \orsf - \ozsf )_{L^2(Q)} \\
   &= ( \tr (\bar{\pe} -  \pe(\orsf)), \orsf - \ozsf )_{L^2(Q)}
  + ( \tr (\pe(\orsf)-\bar{p}), \orsf - \ozsf )_{L^2(Q)}.
%    = \textrm{I} + \textrm{II}.
\end{align*}

\noindent \boxed{2} Consider $( \tr (\bar{\pe} -  \pe(\orsf)), \orsf - \ozsf )_{L^2(Q)}$. Define $\psi :=  \ope - \pe(\orsf) \in \mathbb{V}$ and observe that $\tr \psi(T) = 0$ and, for all $\phi_p \in \HL(y^{\alpha},\C)$, we have
\[
\int_{0}^T  \left( \langle \tr \partial_{T-t}^{\gamma}  \psi, \tr \phi_p  \rangle + a(\psi,\phi_p) \right) \diff t
= \int_{0}^T    \left( \tr (\oue - \ue(\orsf)),\tr \phi_p \right)_{L^2(\Omega)} \diff t.
\]
Analogously, define $\varphi := \oue - \ue(\orsf) \in \mathbb{V}$, which satisfies $\tr \varphi(0) = 0$ and
\[
 \int_{0}^T \left( \langle \tr \partial_{t}^{\gamma}  \varphi, \tr \phi_u  \rangle + a(\varphi,\phi_u) \right) \diff t
= \int_{0}^T \left( \ozsf - \orsf,\tr \phi_u \right)_{L^2(\Omega)} \diff t, ~~\forall \phi_u \in \HL(y^{\alpha},\C).
\]
Set $\phi_p = \varphi$, $\phi_u = \psi$ and apply Lemma~\ref{lem:ibp}. Since $\tr \psi (T) = \tr \varphi (0) = 0$ we get
\[
 ( \tr (\ope - \pe(\orsf)), \orsf - \ozsf )_{L^2(Q)} = - \| \tr (\oue - \ue(\orsf)) \|^2_{L^2(Q)} \leq 0.
\]

\noindent \boxed{3} The previous step shows that proving \eqref{control_exp} reduces to obtaining a bound for $\| \tr (\pe(\bar{\rsf}) - \bar p ) \|_{L^2(Q)}$ and \cite[Lemma 4.3]{NOS3} yields such a bound.

\noindent \boxed{4} In order to prove \eqref{state_exp} we write
\[
  \bar{\ue}(\ozsf) - \bar{v}(\orsf) = \left( \bar{\ue}(\ozsf) - \ue(\orsf) \right) + \left( \ue(\orsf) - \bar{v}(\orsf) \right).
\]
The first term satisfies \eqref{heat_harmonic_extension_weak} with right hand side $\ozsf - \orsf$ so that by \eqref{control_exp} this term is bounded. For the second term we again apply \cite[Lemma 4.3]{NOS3}.
\end{proof}

\begin{remark}[regularity of $\orsf$ vs.~$\ozsf$] \rm
\label{rm:regularity_control_r}
In Theorem~\ref{th:regofz} we studied the regularity of $\ozsf$. The techniques of \cite[Remark 4.4]{NOS3} allow us to transfer these results to $\orsf$, the solution of the truncated optimal control problem. In a similar fashion, we can establish the regularity results of Remark~\ref{rk:reg_states} for $\tr \bar{v}$ and $\tr \bar{p}$. For brevity we skip the details.
\end{remark}

%%%%%%%%%%%%%%%%%%%%%%%%%%%%%%%%%%%%%%%%%%%%%%%%%%%%%%%%%%%%%%%%%%%%%%%%%%%%%%%%%%%%%%%%%%%%%%%%%%%%%%%%%%%%%%%%%%%%%%%%%%%%%%%%%%%%%%%%%%%%%%%%%
\section{Approximation of the state equation}
\label{sec:a_priori_state}
%%%%%%%%%%%%%%%%%%%%%%%%%%%%%%%%%%%%%%%%%%%%%%%%%%%%%%%%%%%%%%%%%%%%%%%%%%%%%%%%%%%%%%%%%%%%%%%%%%%%%%%%%%%%%%%%%%%%%%%%%%%%%%%%%%%%%%%%%%%%%%%%%

We recall the numerical approximation of the state equation \eqref{heat_harmonic_extension_weak} developed in \cite{NOS3}. The scheme employs first degree tensor product finite elements in space and finite differences in time. The latter is the backward Euler scheme for $\gamma = 1$ whereas, for $\gamma \in (0,1)$, it is the scheme of \cite{LLX:11,LinXu:07}, which was studied under appropriate time-regularity conditions on the solution $\ue$ in \cite{NOS3}. We also derive a novel $L^2(Q)$ a priori error estimate for the fully discrete approximation of the state equation \eqref{heat_harmonic_extension_weak} with $\gamma = 1$ and $s \in (0,1)$.
% \HA{Such a result was first developed in \cite{NOS3}, albeit with additional regularity on data ($BV$ in time).} AJS ???

%%%%%%%%%%%%%%%%%%%%%%%%%%%%%%%%%%%%%%%%%%%%%%%%%%%%%%%%%%%%%%%%%%%%%%%%%%%%%%%%%%%%%%
\subsection{Time discretization}
\label{sec:time_discretization}
%%%%%%%%%%%%%%%%%%%%%%%%%%%%%%%%%%%%%%%%%%%%%%%%%%%%%%%%%%%%%%%%%%%%%%%%%%%%%%%%%%%%%%
Let $\K \in \mathbb{N}$ denote the number of time steps. Define the uniform time step $\tau = T/\K > 0$, and set $t_k = k \tau$ for $0 \leq k \leq \K$. We denote the time partition by
$\mathcal{T}:= \left\{ t_k \right\}_{k=0}^\K
% \left\{ 0=t_0,t_1,\cdots,t_{\K} =T\right \}
$.
If $\phi \in C ( [0,T], {\Xcal} )$, we denote $\phi^k = \phi(t_k)$ and $\phi^{\tau}= \{ \phi^k\}_{k=0}^{\K}$. On such sequences we define the norms
\[
\| \phi^{\tau} \|_{\ell^{\infty}(\Xcal)} = \max_{0 \leq k \leq \K} \| \phi^k\|_{\Xcal},
\qquad \| \phi^{\tau} \|_{\ell^2(\Xcal)}^2 = \sum_{k=1}^\K \tau \| \phi^k\|_{\Xcal}^2.
\]
Over sequences $\phi^{\tau} \subset \Xcal$ we define the discrete time derivative $\delta^1$ by
\begin{equation}
\label{1_discrete}
  \delta^1 \phi^{k+1} = \tau^{-1} (\phi^{k+1} - \phi^{k}), \quad k = 0,\ldots, \K -1 .
\end{equation}
As in \cite[\S3.2]{NOS3} we also define, for $\gamma\in(0,1)$, the discrete fractional derivative $\delta^\gamma$ as
\begin{equation}
\label{gamma_discrete}
  \Gamma(2-\gamma)\delta^\gamma \phi^{k+1} := \sum_{j=0}^{k} \frac{  a_j }{ \tau^{\gamma-1} } \delta^1\phi^{k+1-j}
  = \frac{\phi^{k+1}}{\tau^\gamma}
  - \sum_{j=0}^{k-1} \frac{ a_j - a_{j+1} }{\tau^\gamma} \phi^{k-j} - \frac{a_k}{\tau^\gamma} \phi^0,
\end{equation}
where $a_j = (j+1)^{1-\gamma} - j^{1-\gamma}$ and provided the sum for $k = 0$ is defined to be zero.

We remark that, any sequence $\phi^\tau \subset \Xcal$ can be equivalently understood as a piecewise constant, in time, function $\phi \in L^\infty(0,T;\Xcal)$:
\begin{equation}
\label{eq:equivalence}
 \phi(t) = \phi^k, \quad \forall t \in (t_{k-1}, t_k], \qquad k = 1,\cdots,\K.
\end{equation}
This identification will be very useful and, in what follows, we will use it repeatedly and without explicit mention.

\subsection{Space discretization}
\label{sec:space_discretization}

The space discretization is based on truncation and the finite element method. The truncation is as in \cite[Lemma 4.3]{NOS3}, which shows that truncating $\C$ to $\C_{\Y}$ induces an exponentially small error. Since we are now dealing with the bounded domain $\C_\Y$, we can discretize using finite elements.

The finite element discretization follows \cite[\S4]{NOS}.
Let $\T_{\Omega} = \{K\}$ be a conforming triangulation of $\Omega$ into cells $K$ (simplices or $n$-rectangles).
We denote by $\Tr_\Omega$ the collection of all conforming refinements of an original mesh $\T_{\Omega}^0$ and assume $\Tr_\Omega$ is shape regular \cite{CiarletBook}. If $\T_\Omega \in \Tr_\Omega$ we define $h_{\T_{\Omega}}= \max_{K \in \T_\Omega}h_K$. We define $\T_\Y$ to be a partition of $\C_\Y$ into cells of the form $T = K \times I$, where $K \in \T_\Omega$, and $I$ is an interval that comes from the partition $\{ y_m\}_{m=0}^{M}$ of $[0,\Y]$ defined by
\begin{equation}
\label{graded_mesh}
  y_m = \left( \frac{m}{M}\right)^{\zeta} \Y, \quad m=0,\dots,M,
\end{equation}
where $\zeta = \zeta(\alpha) > 3/(1-\alpha)> 1$. The set of all such triangulations is denoted by $\Tr$. Note that the following weak regularity condition is valid: there is a constant $\sigma$ such that, for all $\T_\Y \in \Tr$, if $T_1=K_1\times I_1,T_2=K_2\times I_2 \in \T_\Y$ have nonempty intersection, then
$h_{I_1}/h_{I_2} \leq \sigma$, where $h_I = |I|$; see \cite{NOS2,NOS}. For $\T_\Y \in \Tr$, we denote by $\N(\T_{\Y})$ the set of its nodes and $\Nin(\T_{\Y})$ the set of its interior and Neumann nodes, respectively. We also denote by $N = \# \, \Nin(\T_{\Y})$ the number of degrees of freedom of $\T_\Y$. We assume that $\#\T_\Omega \approx M^n$ so that $N\approx M^{n+1}$. 

The main motivation to consider elements as in \eqref{graded_mesh} is to compensate the rather singular behavior of $\ue$, solution to problem \eqref{heat_harmonic_extension_weak} as $y \approx 0^+$; see \cite{NOS3} for details. 

For $\T_{\Y} \in \Tr$ and $\Gamma_D = \partial_L \C_{\Y} \cup \Omega \times \{ \Y\}$ we define the finite element space
\[
  \V(\T_\Y) = \left\{
            W \in C( \bar{\C}_\Y): W|_T \in \mathcal{P}_1(K)\otimes \mathbb{P}_1(I) \ \forall T = K \times I \in \T_\Y, \
            W|_{\Gamma_D} = 0
          \right\},
\]
If $K$ is a simplex, then $\mathcal{P}_1(K)=\mathbb{P}_1(K)$, whereas if $K$ is a cube, then $\mathcal{P}_1(K)=\mathbb{Q}_1(K)$. We also define $\U(\T_{\Omega})=\tr \V(\T_{\Y})$, \ie a $\mathcal{P}_1$ finite element space over the mesh $\T_\Omega$.

\subsection{A fully discrete scheme}
\label{sec:fully_scheme}

The fully discrete scheme to solve \eqref{heat_alpha_extension} combines the space discretization of \S\ref{sec:space_discretization} with the time discretization of \S\ref{sec:time_discretization}. To define it, we first consider the weighted elliptic projector $G_{\T_\Y}$ studied in \cite[\S 4.3]{NOS3}:
\begin{equation}
 \label{elliptic_projection}
  w \in \HL(y^\alpha,\C_\Y): \qquad a_\Y \big( G_{\T_{\Y}}w , W \big)  = a_\Y(w, W), \quad \forall W \in \V(\T_{\Y}).
\end{equation}

The fully-discrete scheme computes $V_{\T_{\Y}}^\tau  \subset \V(\T_{\Y})$, an approximation of the solution to \eqref{state_equation_truncated}, with $\rsf = 0$, at each time step. We initialize the scheme by setting
\begin{equation}
\label{initial_data_discrete}
 V_{\T_{\Y}}^{0} = \mathcal I_{\T_\Omega}\usf_0 = G_{\T_\Y} \circ \mathcal{H}_\alpha \usf_0,
\end{equation}
where $\mathcal I_{\T_\Omega} = G_{\T_\Y} \circ \mathcal{H}_\alpha$ and $\mathcal{H}_\alpha$ is the $\alpha$-harmonic extension operator defined in \S\ref{sec:control_truncated}; notice that $\tr V_{\T_\Y}^0 = \tr G_{\T_\Y} v(0)$, where $v(0)$ solves \eqref{eq:mathcalH} with $\wsf = \usf_0$.

For $k=0,\dots,\K-1$, $V_{\T_{\Y}}^{k+1} \in \V(\T_{\Y})$ solves
\begin{equation}
\label{fully_beta}
 ( \delta^\gamma \tr V_{\T_{\Y}}^{k+1} , \tr W )_{L^2(\Omega)}  + a_\Y(V_{\T_{\Y}}^{k+1},W) = \left\langle \fsf^{k+1}, \tr W   \right\rangle,
 \quad
 \forall W \in \V(\T_\Y),
 \end{equation}
where $\delta^\gamma$ is defined by \eqref{gamma_discrete} for $\gamma \in (0,1)$ and by \eqref{1_discrete} for $\gamma = 1$ and $\fsf^{k+1}= \tau^{-1} \int_{t_k}^{t_{k+1}} \fsf \diff t$. An approximate solution of problem \eqref{fractional_heat} is $U_{\T_{\Omega}}^\tau  \subset \U(\T_\Omega)$ with
\begin{equation}
\label{discrete_Ufd}
 U_{\T_{\Omega}}^\tau = \tr V_{\T_{\Y}}^\tau.
\end{equation}

To present error estimates for scheme \eqref{initial_data_discrete}--\eqref{fully_beta}, for $\gamma \in (0,1)$, we introduce
\begin{align*}
\mathcal{A} = \mathcal{A}(\vsf,\gsf) &=  \| \vsf\|_{\Hs} + \| \gsf \|_{H^2(0,T;\Hsd)},\\
\mathcal{B} = \mathcal{B}(\vsf,\gsf) &= \| \vsf \|_{\mathbb{H}^{1+3s}(\Omega)} + \| \gsf|_{t=0} \|_{\mathbb{H}^{1+s}(\Omega)} 
  + \| \gsf \|_{W^{1}_{\infty}(0,T;\mathbb{H}^{1-(1-2\nu)s}(\Omega))},
\end{align*}
where $\nu>0$ is arbitrary. Theorem 5.3 in \cite{NOS3} provides the following error estimates for the scheme \eqref{initial_data_discrete}--\eqref{fully_beta} with $\gamma \in (0,1)$ and $s \in (0,1)$.

\begin{theorem}[error estimates: $s,\gamma \in (0,1)$]
\label{th:order_beta}
Let $\rsf = 0$ and $s,\gamma \in (0,1)$. If $v$ solves \eqref{state_equation_truncated}, $V_{\T_{\Y}}^\tau$ solves \eqref{initial_data_discrete}--\eqref{fully_beta}, $\mathcal{A}(\usf_0,\fsf)\mathcal{B}(\usf_0,\fsf)<\infty$ and $\T_{\Y}$ verifies \eqref{graded_mesh}, then
\begin{equation}\label{estimate_1}
[I^{1-\gamma}_t\| \tr (v^\tau - V_{\T_{\Y}}^\tau) \|_{L^2(\Omega)}^2(T)]^{\tfrac{1}{2}} \lesssim 
\tau^{\theta} \mathcal{A}(\usf_0,f) + |\log N|^{2s} N^\frac{-(1+s)}{n+1} \mathcal{B}(\usf_0,f),
\end{equation}
and
\begin{equation}\label{estimate_2}
\| v^\tau - V_{\T_{\Y}}^\tau \|_{\ell^{2}(\HLn(y^{\alpha},\C_\Y) )} \lesssim \tau^{\theta} \mathcal{A}(\usf_0,f) 
+ |\log N|^{s} N^\frac{-1}{n+1} \mathcal{B}(\usf_0,f),
\end{equation}
where $\theta \in (0,\tfrac{1}{2})$, and the hidden constant does not depend on 
$v$, $V_{\T_{\Y}}^{\tau}$ nor the problem data, but blows up as $\theta \uparrow \tfrac12$.
\end{theorem}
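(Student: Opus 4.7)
The plan is to follow the classical error-splitting strategy into a time semi-discretization and a space discretization piece, leveraging the elliptic projector $G_{\T_\Y}$ defined in \eqref{elliptic_projection}. First, I would introduce the time semi-discrete solution $v^\tau \subset \HL(y^\alpha,\C_\Y)$, defined by replacing the finite element space $\V(\T_\Y)$ by $\HL(y^\alpha,\C_\Y)$ in \eqref{fully_beta} and using the same discrete fractional derivative $\delta^\gamma$. This allows the decomposition
\[
v - V_{\T_\Y}^\tau = (v - v^\tau) + (v^\tau - G_{\T_\Y} v^\tau) + (G_{\T_\Y} v^\tau - V_{\T_\Y}^\tau) =: \rho^\tau + \eta^\tau + \xi^\tau.
\]

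For the time error $\rho^\tau$, the key point is to compare the continuous Caputo derivative $\partial_t^\gamma$ with the discrete operator $\delta^\gamma$ from \eqref{gamma_discrete}. Under the regularity $\mathcal{A}(\usf_0,\fsf) < \infty$, one shows via a truncation argument in $t$ and the L1-type identity used in \cite{LLX:11,LinXu:07} that the consistency error is of order $\tau^\theta$ for any $\theta \in (0,\tfrac12)$; the constant blows up as $\theta\uparrow\tfrac12$ because the bound on $\partial_t^\gamma v$ near $t=0$ only controls $\tfrac12$ of a time derivative after factoring out an integrable singularity. Stability of the scheme in the $I_t^{1-\gamma}\|\cdot\|_{L^2(\Omega)}^2(T)$ and $\ell^2(\HL(y^\alpha,\C_\Y))$ norms, which follows from the positivity of the coefficients $a_j - a_{j+1}$ in \eqref{gamma_discrete}, then transfers this consistency bound to $\rho^\tau$ in the claimed norms.

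For the space interpolation error $\eta^\tau$, I would invoke the approximation properties of $G_{\T_\Y}$ on the graded mesh \eqref{graded_mesh}, which are designed precisely to compensate the anisotropic singular behavior of $v$ as $y\to 0^+$ coming from the weight $y^\alpha$. These yield, uniformly in $\tau$,
\[
\| \eta^\tau \|_{\ell^\infty(\HLn(y^\alpha,\C_\Y))} \lesssim |\log N|^s N^{-1/(n+1)} \mathcal{B}(\usf_0,\fsf),
\]
with an improved rate $|\log N|^{2s}N^{-(1+s)/(n+1)}$ for the trace in $L^2(\Omega)$ via a duality (Aubin--Nitsche) argument, which is the origin of the difference between the two estimates in \eqref{estimate_1} and \eqref{estimate_2}. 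The regularity encoded in $\mathcal{B}$ is exactly what the interpolation estimates on anisotropic tensor-product elements demand.

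Finally, for $\xi^\tau$, the Galerkin-type identity for $G_{\T_\Y}$ gives that $\xi^\tau$ satisfies the fully discrete scheme \eqref{fully_beta} with right-hand side essentially $\delta^\gamma \tr \eta^\tau$ (plus commutator terms between $\delta^\gamma$ and $G_{\T_\Y}$). A discrete stability estimate, together with Proposition~\ref{pro:continuity} applied to the identification \eqref{eq:equivalence}, lets us absorb this term into bounds already obtained for $\eta^\tau$. The main obstacle throughout is the nonlocal structure of $\delta^\gamma$: because it couples all previous time levels, one cannot use Gronwall directly, and monotonicity/positivity properties of the coefficients $\{a_j\}$ must be used carefully both for stability and to keep the $I_t^{1-\gamma}$-weighted terminal bound in \eqref{estimate_1} under control; this is precisely the technical heart of \cite[Theorem 5.3]{NOS3}, from which the stated estimates now follow by collecting $\rho^\tau$, $\eta^\tau$, and $\xi^\tau$.
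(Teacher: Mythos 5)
Your outline is sound, but note that the paper does not actually prove this theorem: it is quoted directly from \cite[Theorem 5.3]{NOS3}, and your sketch (splitting through a time-semidiscrete solution and the weighted elliptic projector $G_{\T_\Y}$, consistency of order $\tau^{\theta}$ for the discrete derivative $\delta^\gamma$ with stability coming from the monotone weights $a_j$, graded-mesh approximation of $G_{\T_\Y}$ plus a duality argument for the improved trace bound) is essentially a reconstruction of the argument given there, with you too ultimately deferring the technical estimates to \cite{NOS3}. So you take essentially the same route as the paper; the only minor issues are notational, namely that $v^\tau$ in the statement already denotes the nodal values of the exact solution $v$, so your time-semidiscrete solution should carry a different symbol, and that since $G_{\T_\Y}$ is time-independent it commutes exactly with $\delta^\gamma$, so no commutator terms arise.
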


\subsection{$L^2(Q)$-error estimate: $s \in (0,1)$ and $\gamma = 1$}
\label{sec:fully_scheme_1}

We now derive a novel $L^2(Q)$-error estimate for \eqref{initial_data_discrete}--\eqref{fully_beta} with $\gamma = 1$, which is inspired by classical techniques developed, for instance, in \cite{BGRV,RHN:85}. To obtain it, we set $\rsf = 0$ and consider, as a technical instrument, a semi-discrete approximation to \eqref{state_equation_truncated}: Set $V^0 = \mathcal{H}_\alpha \usf_0$ and, for $k=0,\dots,\K-1$, compute $V^{k+1}  \in \HL(y^{\alpha},\C_\Y)$, that solves
\begin{equation}
\label{eq:semi_1}
  ( \delta^1 \tr V^{k+1} , \tr \phi )_{L^2(\Omega)}  + a_\Y(V^{k+1},\phi) = \left\langle \fsf^{k+1}, \tr \phi   \right\rangle,
  \quad \forall \phi \in \HL(y^{\alpha},\C_\Y),
\end{equation}
where $\delta^1$ is defined by \eqref{1_discrete}. We present the following stability result.

\begin{lemma}[stability]
\label{LM:stab_sd}
Let $V^{\tau} \subset \HL(y^{\alpha},\C_\Y)$ solve \eqref{eq:semi_1}. If $\fsf \in L^2(Q)$ and $\usf_0 \in \Hs$, then we have
\begin{equation}
\label{eq:stab_sd}
\| \tr \delta^1 V^{\tau} \|^2_{\ell^2(L^2(\Omega))} + \| \nabla V^{\tau} \|^2_{\ell^\infty(L^2(y^{\alpha},\C_{\Y}))} \lesssim \| \fsf \|^2_{\ell^2(L^2(\Omega))} + \| \usf_0\|^2_{\Hs},
\end{equation}
\end{lemma}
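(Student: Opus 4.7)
The plan is to mimic, at the fully discrete level, the proof of Theorem~\ref{TH:stab_v_1}: the key idea is to use $\phi = \delta^1 V^{k+1} \in \HL(y^\alpha,\C_\Y)$ as test function in \eqref{eq:semi_1}. This produces the identity
\[
  \|\tr \delta^1 V^{k+1}\|_{L^2(\Omega)}^2 + a_\Y(V^{k+1}, \delta^1 V^{k+1})
  = \langle \fsf^{k+1}, \tr \delta^1 V^{k+1} \rangle.
\]
Because $a_\Y$ is symmetric (and coercive) on $\HL(y^\alpha,\C_\Y)$, the algebraic identity $2b(b-c)=b^2-c^2+(b-c)^2$ applied in the form $a_\Y(V^{k+1}, V^{k+1}-V^k)=\tfrac12\bigl[a_\Y(V^{k+1},V^{k+1})-a_\Y(V^k,V^k)+a_\Y(V^{k+1}-V^k,V^{k+1}-V^k)\bigr]$ gives, after multiplying through by $\tau$,
\[
  \tau\|\tr \delta^1 V^{k+1}\|_{L^2(\Omega)}^2 + \tfrac{1}{2}\bigl[a_\Y(V^{k+1},V^{k+1})-a_\Y(V^k,V^k)\bigr] + \tfrac{\tau^2}{2}\,a_\Y(\delta^1 V^{k+1},\delta^1 V^{k+1}) = \tau\langle \fsf^{k+1}, \tr \delta^1 V^{k+1}\rangle.
\]

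Next I would sum these equalities for $k=0,\dots,m-1$, where $1\le m\le \K$ is arbitrary. The middle term telescopes, leaving $a_\Y(V^m,V^m)-a_\Y(V^0,V^0)$. On the right-hand side I would pair $\fsf^{k+1}$ with $\tr \delta^1 V^{k+1}$ in $L^2(\Omega)$, then apply Cauchy--Schwarz and Young's inequality with a small parameter to split the product into $\tfrac12\|\fsf^{k+1}\|_{L^2(\Omega)}^2 + \tfrac12\|\tr \delta^1 V^{k+1}\|_{L^2(\Omega)}^2$. The second summand is absorbed into the left-hand side and the first summand is bounded by $\tfrac12\|\fsf\|_{\ell^2(L^2(\Omega))}^2$. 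Using coercivity of $a_\Y$ (which makes $a_\Y(w,w)^{1/2}$ equivalent to $\|\nabla w\|_{L^2(y^\alpha,\C_\Y)}$ on $\HL(y^\alpha,\C_\Y)$, and in particular discarding the nonnegative discrete dissipation term $\tfrac{\tau^2}{2}\,a_\Y(\delta^1 V^{k+1},\delta^1 V^{k+1})$), I arrive at
\[
  \sum_{k=0}^{m-1}\tau\|\tr \delta^1 V^{k+1}\|_{L^2(\Omega)}^2 + \|\nabla V^m\|_{L^2(y^\alpha,\C_\Y)}^2
  \lesssim \|\nabla V^0\|_{L^2(y^\alpha,\C_\Y)}^2 + \|\fsf\|_{\ell^2(L^2(\Omega))}^2.
\]

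The initial datum is handled by $V^0=\mathcal H_\alpha \usf_0$ together with the extension bound recalled in Remark~\ref{rm:initial_datum}, which gives $\|\nabla V^0\|_{L^2(y^\alpha,\C_\Y)}\lesssim \|\usf_0\|_{\Hs}$. Finally I take the maximum over $m\in\{1,\dots,\K\}$ of the second term on the left (the right-hand side is $m$-independent) and let $m=\K$ in the first term. This yields both $\|\tr \delta^1 V^\tau\|_{\ell^2(L^2(\Omega))}^2$ and $\|\nabla V^\tau\|_{\ell^\infty(L^2(y^\alpha,\C_\Y))}^2$ controlled by $\|\fsf\|_{\ell^2(L^2(\Omega))}^2 + \|\usf_0\|_{\Hs}^2$, as claimed. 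There is no real obstacle: the only points requiring a small dose of care are the algebraic identity used for the bilinear form (which needs symmetry of $a_\Y$) and the fact that one must run the summation up to a generic $m$ in order to extract the $\ell^\infty$-in-time bound rather than just an $\ell^2$- or endpoint bound.
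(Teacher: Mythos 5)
Your proof is correct and follows essentially the same route as the paper, whose proof consists precisely of testing \eqref{eq:semi_1} with $\phi = V^{k+1}-V^{k}$ (i.e.\ $\tau\,\delta^1 V^{k+1}$) and invoking the bound $\| \nabla V^0 \|_{L^2(y^{\alpha},\C_\Y)} \lesssim \| \usf_0 \|_{\Hs}$ from Remark~\ref{rm:initial_datum}; you have simply written out the telescoping, Young's inequality, and the maximization over $m$ that the paper leaves implicit.
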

where the hidden constant does not depend on $V^{\tau}$ nor the problem data.
\begin{proof}
Set $\phi = V^{k+1} - V^{k}$ and use the estimate of Remark~\ref{rm:initial_datum}.
\end{proof}

Define the piecewise linear function $\hat{V} \in C^{0,1}([0,T];\HL(y^\alpha,\C_\Y))$ by $\hat{V}(0) = V^0$ 
\begin{equation}
\label{hatV}
\hat{V}(t) = V^{k} + (t-t_k)\delta^{1}V^{k+1}, \quad t \in (t_k,t_{k+1}],
\end{equation}
for $k = 0,\dots,\K-1$. Using this notation, we rewrite equation \eqref{eq:semi_1} as
\begin{equation}
\label{newsemi_1}
  ( \tr \partial_t \hat{V}(t), \tr \phi )_{L^2(\Omega)}  + a_\Y(V^\tau(t),\phi) = \left\langle \fsf^{\tau}(t), \tr \phi   \right\rangle, \quad \forall \phi \in \HL(y^{\alpha},\C_\Y),
\end{equation}
for a.e.~$t \in (0,T)$. We are now in position to derive an error estimate for \eqref{eq:semi_1}.

\begin{theorem}[semi-discrete error estimate: $\gamma = 1$]
\label{TH:order_1}
Let $v$ and $V^\tau$ solve \eqref{state_equation_truncated} and \eqref{eq:semi_1}, respectively. If $\fsf \in L^{\infty}(0,T;L^2(\Omega))$ and $\usf_0 \in \Hs$, then
\begin{equation}
\label{estimate_11}
  \| \tr (v - V^\tau) \|_{L^2(0,T;L^2(\Omega))}  \lesssim \tau \left( \| \fsf \|_{L^{\infty}(0,T;L^2(\Omega))} + \| \usf_0 \|_{\Hs}\right), 
\end{equation}
where the hidden constant does not depend on $v$, $V^{\tau}$ nor the problem data.
\end{theorem}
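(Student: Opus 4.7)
My plan is to decompose the error as $v - V^\tau = (v - \hat V) + (\hat V - V^\tau)$, where $\hat V$ is the piecewise linear interpolant from \eqref{hatV}. The second piece is handled directly by the stability of Lemma~\ref{LM:stab_sd}, whereas the first piece requires an Aubin--Nitsche-type duality argument that transfers all required spatial regularity onto the dual solution (we cannot simply use an energy argument since we do not control $\nabla \partial_t v$).

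For the piecewise linear piece, on each $I_k := (t_k, t_{k+1}]$ we have $\hat V(t) - V^{k+1} = (t - t_{k+1})\delta^1 V^{k+1}$, whence
\[
\|\tr(\hat V - V^\tau)\|_{L^2(Q)}^2 \leq \tfrac{\tau^2}{3}\,\|\tr \delta^1 V^\tau\|^2_{\ell^2(L^2(\Omega))} \lesssim \tau^2\bigl(\|\fsf\|^2_{L^\infty(0,T;L^2(\Omega))} + \|\usf_0\|^2_{\Hs}\bigr),
\]
using Lemma~\ref{LM:stab_sd} together with $\|\fsf\|^2_{\ell^2(L^2(\Omega))} \le \|\fsf\|^2_{L^2(Q)} \lesssim \|\fsf\|^2_{L^\infty(0,T;L^2(\Omega))}$.

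For the main piece, introduce the backward dual problem: find $\varphi \in \V_\Y$ with $\tr\varphi(T) = 0$ and, for a.e.~$t$,
\[
-\langle\tr\partial_t\varphi, \tr\phi\rangle + a_\Y(\varphi,\phi) = (\tr(v - \hat V), \tr\phi)_{L^2(\Omega)}, \quad \forall\, \phi \in \HL(y^\alpha,\C_\Y).
\]
The change of variables $t \mapsto T - t$ converts this into a forward problem of the form \eqref{state_equation_truncated} with $\gamma = 1$, so Theorem~\ref{TH:stab_v_1} provides
\[
\|\tr\partial_t\varphi\|_{L^2(Q)} + \|\nabla\varphi\|_{L^\infty(0,T;L^2(y^\alpha,\C_\Y))} \lesssim \|\tr(v - \hat V)\|_{L^2(Q)}.
\]
Testing with $\phi = v - \hat V$ and integrating by parts in time---the boundary contributions vanish since $\tr\varphi(T) = 0$ and $\hat V(0) = V^0 = \mathcal{H}_\alpha \usf_0 = v(0)$ (the last equality by Remark~\ref{rm:initial_datum})---and then subtracting \eqref{newsemi_1} from \eqref{state_equation_truncated} to obtain the error identity
\[
\langle \tr\partial_t(v - \hat V), \tr\varphi\rangle + a_\Y(v - V^\tau, \varphi) = \langle \fsf - \fsf^\tau, \tr\varphi\rangle,
\]
yields
\[
\|\tr(v - \hat V)\|^2_{L^2(Q)} = \int_0^T \langle \fsf - \fsf^\tau, \tr\varphi\rangle\,dt \;+\; \int_0^T a_\Y(V^\tau - \hat V, \varphi)\,dt.
\]

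The first right-hand-side term uses that $\fsf^{k+1}$ is the mean of $\fsf$ on $I_k$: subtracting the $I_k$-average of $\tr\varphi$ to create a zero-mean factor, Cauchy--Schwarz and Poincar\'e in time give $O(\tau)\|\fsf\|_{L^2(Q)}\|\tr\partial_t\varphi\|_{L^2(Q)}$. The \emph{main obstacle} is the second term: writing $V^\tau - \hat V = (t_{k+1} - t)\delta^1 V^{k+1}$ on $I_k$, a direct bound would require $\|\nabla \delta^1 V^{k+1}\|_{L^2(y^\alpha,\C_\Y)}$, which is not controlled by Lemma~\ref{LM:stab_sd}. The remedy is to use the semi-discrete scheme \eqref{eq:semi_1} at steps $k$ and $k+1$ to substitute
\[
a_\Y(\delta^1 V^{k+1},\phi) = \langle \delta^1 \fsf^{k+1}, \tr\phi\rangle - (\tr\delta^2 V^{k+1}, \tr\phi), \qquad k \geq 1,
\]
then to sum by parts in $k$ so the discrete differences are transferred onto $\varphi$, producing a bound in terms of $\|\tr\partial_t\varphi\|_{L^2(Q)}$, $\|\tr V^\tau\|_{\ell^\infty(L^2(\Omega))}$, and $\|\fsf\|_{\ell^\infty(L^2(\Omega))}$. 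The initial step $k=0$ is treated separately using $V^0 = \mathcal{H}_\alpha\usf_0$ together with $\|\nabla V^0\|_{L^2(y^\alpha,\C_\Y)} \lesssim \|\usf_0\|_{\Hs}$ from Remark~\ref{rm:initial_datum}. Assembling the estimates, the right-hand side is $\lesssim \tau\bigl(\|\fsf\|_{L^\infty(0,T;L^2(\Omega))} + \|\usf_0\|_{\Hs}\bigr)\,\|\tr(v - \hat V)\|_{L^2(Q)}$, and dividing through completes the proof of \eqref{estimate_11}.
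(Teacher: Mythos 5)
Your route is genuinely different from the paper's, and much heavier. The paper's proof is a direct energy argument with no duality at all: subtract \eqref{newsemi_1} from \eqref{state_equation_truncated} (with $\gamma=1$, $\rsf=0$), integrate the error relation in time from $0$ to $t$, test with $\bar e(t):=v(t)-V^\tau(t)$ itself and integrate over $(0,T)$. The bilinear term is simply discarded because $\int_0^T a_\Y\bigl(\int_0^t\bar e(\xi)\diff\xi,\bar e(t)\bigr)\diff t=\tfrac12\,a_\Y\bigl(\int_0^T\bar e,\int_0^T\bar e\bigr)\ge 0$; the forcing term is $O(\tau)\|\fsf\|_{L^\infty(0,T;L^2(\Omega))}$ because $\int_0^{t}(\fsf-\fsf^\tau)$ vanishes at every grid point; and the only other term, $(\tr(\bar e-\hat e),\tr\bar e)_{L^2(\Omega)}$ with $\hat e:=v-\hat V$, is bounded by $\tau\|\tr\delta^1V^\tau\|_{\ell^2(L^2(\Omega))}$ via Lemma~\ref{LM:stab_sd}. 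In particular the term $\int_0^T a_\Y(V^\tau-\hat V,\varphi)\diff t$ that drives all the extra machinery in your argument never appears.

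Within your proposal, most steps are sound: the bound for $\hat V-V^\tau$, the dual problem and its stability through Theorem~\ref{TH:stab_v_1} after $t\mapsto T-t$, the integration by parts (legitimate since $\tr\varphi(T)=0$ and $\hat V(0)=V^0=v(0)$), and the $O(\tau)$ estimate of the $\fsf-\fsf^\tau$ term. The gap is that the decisive estimate --- that after substituting $a_\Y(\delta^1V^{k+1},\cdot)=\langle\delta^1\fsf^{k+1},\cdot\rangle-(\tr\delta^2V^{k+1},\cdot)$ and summing by parts the second term is $O(\tau)$ --- is only asserted, and it is genuinely delicate at its lower end. Whatever way you organize the summation by parts, its lower boundary term pairs $\tr\delta^1V^{k}$ at a fixed small index against a weight of size $\tau\sup_{t\approx 0}\|\tr\varphi(t)\|_{L^2(\Omega)}$; with only $\usf_0\in\Hs$, Lemma~\ref{LM:stab_sd} gives no better than $\|\tr\delta^1V^1\|_{L^2(\Omega)}\lesssim\tau^{-1/2}$ times data, and $\varphi$ is not small near $t=0$, so Cauchy--Schwarz there produces $O(\tau^{1/2})$, not $O(\tau)$. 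Your separate treatment of ``the initial step'' via $\|\nabla V^0\|_{L^2(y^\alpha,\C_\Y)}\lesssim\|\usf_0\|_{\Hs}$ handles the $k=0$ interval of the sum, not this boundary term. It can be repaired --- use \eqref{eq:semi_1} at $k=0$ once more to replace $(\tr\delta^1V^1,\tr\varphi)$ by $\langle\fsf^1,\tr\varphi\rangle-a_\Y(V^1,\varphi)$ and invoke the $\ell^\infty$-in-time bound on $\|\nabla V^\tau\|_{L^2(y^\alpha,\C_\Y)}$ from \eqref{eq:stab_sd} together with $\|\nabla\varphi\|_{L^\infty(0,T;L^2(y^\alpha,\C_\Y))}\lesssim\|\tr(v-\hat V)\|_{L^2(Q)}$ --- but as written the hardest step of your proof is exactly the one left unverified, and even once completed the duality detour buys nothing here: hypotheses and conclusion coincide with those of the paper's much shorter argument.
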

\begin{proof}
Define $\hat{e} = v - \hat{V}$ and $\bar{e} = v - V^{\tau}$. Set $\gamma = 1$ and $\rsf = 0$ in \eqref{state_equation_truncated} and then subtract from it \eqref{newsemi_1}. Integrating with respect to time the result we obtain
\begin{multline*}
( \tr \bar{e}(t), \tr \phi )_{L^2(\Omega)}  + a_\Y\left(\int_0^t \bar{e}(\xi) \diff \xi, \phi \right) = 
  \left\langle \int_0^t (\fsf(\xi) - \fsf^{\tau}(\xi)) \diff \xi, \tr \phi   \right\rangle
  \\
  + ( \tr ( \bar{e}(t) - \hat{e}(t) ), \tr \phi )_{L^2(\Omega)}, \quad \text{a.e. } t\in (0,T), \quad \forall \phi \in \HL(y^{\alpha},\C_\Y).
\end{multline*}
Set, for a.e.~$t\in (0,T)$, $\phi = \bar{e}(t) \in \HL(y^{\alpha},\C_\Y)$. Integrating over time once more yields
\begin{multline}
\label{eq:csm}
  \int_0^T \| \tr \bar{e}(t) \|^2_{L^2(\Omega)}\diff t 
  \leq \left| \int_0^T \left \langle \int_0^t (\fsf(\xi) - \fsf^{\tau}(\xi)) \diff \xi, \tr \bar{e}(t)   \right\rangle \diff t \right| \\
  + \left| \int_0^T ( \tr ( \bar{e}(t) - \hat{e}(t) ), \tr \bar{e}(t) )_{L^2(\Omega)} \diff t \right|,
\end{multline}
where we used that 
\[
  \int_0^T a_\Y \left( \int_0^t \bar{e}(\xi) \diff \xi, \bar{e}(t) \right) \diff t  = \frac{1}{2}  a_\Y \left( \int_0^T \bar{e}(t) \diff t,\int_0^T \bar{e}(t) \diff t \right) \geq 0.
\]

Notice that, since $\fsf^{k+1} = \tau^{-1}\int_{t_k}^{t_{k+1}} \fsf(t) \diff t$,
\[
  \int_0^{t_l} (\fsf(\xi) - \fsf^\tau(\xi) ) \diff \xi = \sum_{k=1}^{l} \int_{t_{k-1}}^{t_k} \left( \fsf(\xi) - \fsf^k \right) \diff \xi = 0.
\]
Consequently, if $t_l \leq t < t_{l+1}$, we have
\[
  \int_0^t (\fsf(\xi) - \fsf^\tau(\xi) ) \diff \xi = \int_{t_l}^{t} (\fsf(\xi) - \fsf^\tau(\xi) ) \diff \xi \lesssim \tau \| \fsf \|_{L^\infty(0,t)}.
\]
In conclusion, the first term on the right hand side of \eqref{eq:csm} can be bound by
\[
  \left| \int_0^T \left\langle \int_0^t (\fsf(\xi) - \fsf^{\tau}(\xi)) \diff s, \tr \bar{e}(t)   \right\rangle \diff t \right| \lesssim
  \tau^2 \| \fsf \|_{L^\infty(0,T,L^2(\Omega))}^2 + \frac14 \| \tr \bar{e} \|_{L^2(Q)}^2.
\]

Since, on $(t_k,t_{k+1}]$, we have that $|\bar{e}(t) - \hat{e}(t)| \leq \tau |\delta^1 V^{k+1}|$, estimate \eqref{eq:stab_sd} yields
\[
  \int_0^T \| \tr ( \hat{e}(t) -\bar{e}(t) )  \|^2_{L^2(\Omega)}\diff t \leq
  \tau^2 \left\| \delta^1 V^{\tau} \right\|_{\ell^2(L^2(\Omega))}^2
  \lesssim \tau^2 \left( \| \fsf^{\tau}\|^2_{\ell^2(L^2(\Omega))} + \| \usf_0\|_{\Hs}^2 \right),
\]
and, therefore, the second term on the right hand side of \eqref{eq:csm} can be bounded by
\begin{align*}
  \left| \int_0^T ( \tr ( \bar{e}(t) - \hat{e}(t) ), \tr \bar{e}(t) )_{L^2(\Omega)} \diff t \right| 
  & \leq \frac14 \| \tr \bar{e} \|_{L^2(0,T;L^2(\Omega))}^2 \\
  &+ C \tau^2 \left( \| \fsf^{\tau}\|^2_{\ell^2(L^2(\Omega))} + \| \usf_0\|_{\Hs}^2 \right).
\end{align*}

Collecting all the derived bounds we arrive at the desired error estimate \eqref{estimate_11}.
\end{proof}

With this estimate at hand we can control the difference  between the fully and the semi-discrete problems.

\begin{theorem}[auxiliary error estimate: $\gamma = 1$]
\label{TH:order_1aux}
Let $\gamma = 1$ and assume that $\usf_0 \in \mathbb{H}^{1+s}(\Omega)$ and $\fsf \in L^2(0,T;\Ws)$.
If $V^\tau$ and $V^{\tau}_{\T_{\Y}}$ solve \eqref{eq:semi_1} and \eqref{fully_beta}, respectively,  then
\begin{equation*}
% \label{estimate_aux}
\| \tr (V^\tau - V^{\tau}_{\T_{\Y}}) \|_{\ell^2(L^2(\Omega))}  \lesssim  |\log N|^{2s} N^{-\frac{1+s}{n+1}} \left( \| \fsf^{\tau} \|_{\ell^2(0,T;\Ws)} + \| \usf_0\|_{\mathbb{H}^{1+s}(\Omega)} \right),
\end{equation*}
where the hidden constant does not depend on $v$, $V^{\tau}$ nor the problem data.
\end{theorem}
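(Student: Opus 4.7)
The argument follows the classical Wheeler-type backward-Euler $L^2$-error estimate, adapted to the weighted extended setting. I split the error via the weighted elliptic projector $G_{\T_\Y}$ from \eqref{elliptic_projection}:
\[
V^{k+1} - V^{k+1}_{\T_\Y} = \rho^{k+1} + \theta^{k+1}, \quad \rho^{k+1} := V^{k+1} - G_{\T_\Y} V^{k+1}, \quad \theta^{k+1} := G_{\T_\Y} V^{k+1} - V^{k+1}_{\T_\Y} \in \V(\T_\Y).
\]
By construction \eqref{initial_data_discrete} one has $V^0_{\T_\Y} = G_{\T_\Y} \mathcal{H}_\alpha \usf_0 = G_{\T_\Y} V^0$, hence $\theta^0 = 0$. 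Restricting \eqref{eq:semi_1} to test functions $W \in \V(\T_\Y) \subset \HL(y^\alpha,\C_\Y)$, subtracting \eqref{fully_beta}, and using Galerkin orthogonality $a_\Y(\rho^{k+1},W)=0$ yields
\[
(\delta^1 \tr \theta^{k+1}, \tr W)_{L^2(\Omega)} + a_\Y(\theta^{k+1}, W) = -(\delta^1 \tr \rho^{k+1}, \tr W)_{L^2(\Omega)} \qquad \forall\, W \in \V(\T_\Y).
\]

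Next I would test with $W = \theta^{k+1}$, invoke the standard identity $(\delta^1 a^{k+1}, a^{k+1})_{L^2(\Omega)} \geq \tfrac12 \delta^1 \|a^{k+1}\|^2_{L^2(\Omega)}$, coercivity of $a_\Y$ on $\HL(y^\alpha,\C_\Y)$, and bound the right-hand side by duality together with the trace estimate \eqref{Trace_estimate}: $|(\delta^1 \tr \rho^{k+1}, \tr \theta^{k+1})_{L^2(\Omega)}| \leq \|\delta^1 \tr \rho^{k+1}\|_{\Hsd}\|\tr \theta^{k+1}\|_{\Hs}$. A Young inequality absorbs the $\Hs$-norm of $\tr\theta^{k+1}$ into the coercive term; multiplying by $\tau$, summing $k=0,\dots,\K-1$, and using $\theta^0=0$ gives
\[
\|\tr \theta^\tau\|^2_{\ell^2(L^2(\Omega))} \lesssim \sum_{k=0}^{\K-1} \tau \|\delta^1 \tr \rho^{k+1}\|^2_{\Hsd}.
\]
Since $\delta^1$ commutes with $(I - G_{\T_\Y})$, so that $\delta^1 \rho^{k+1} = (I - G_{\T_\Y})\delta^1 V^{k+1}$, the Aubin--Nitsche-type $L^2$-projection estimate for $G_{\T_\Y}$ from \cite{NOS,NOS3}, namely $\|\tr(w - G_{\T_\Y}w)\|_{L^2(\Omega)} \lesssim |\log N|^{2s}N^{-(1+s)/(n+1)}\|\tr w\|_{\mathbb{H}^{1+s}(\Omega)}$, together with the embedding $L^2(\Omega)\hookrightarrow \Hsd$, reduces the bound for $\|\tr(V^\tau - V^\tau_{\T_\Y})\|_{\ell^2(L^2(\Omega))}$ to controlling the discrete $\ell^2(0,T;\mathbb{H}^{1+s}(\Omega))$ norms of $\tr V^\tau$ and $\tr \delta^1 V^\tau$.

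The main technical obstacle—and the crux of the proof—is then the discrete regularity estimate
\[
\|\tr V^\tau\|_{\ell^2(\mathbb{H}^{1+s}(\Omega))} + \|\tr \delta^1 V^\tau\|_{\ell^2(\mathbb{H}^{1+s}(\Omega))} \lesssim \|\fsf^\tau\|_{\ell^2(\Ws)} + \|\usf_0\|_{\mathbb{H}^{1+s}(\Omega)}.
\]
Observing that $U^{k+1}:=\tr V^{k+1}$ solves the spatial backward-Euler recursion $\delta^1 U^{k+1} + \calLs U^{k+1} = \fsf^{k+1}$, the isomorphism $\calLs:\mathbb{H}^{1+s}(\Omega)\to \Ws$ valid under assumption \eqref{Omega_regular} (see \S\ref{sub:fractional_L}) permits running the stability analysis of Lemma~\ref{LM:stab_sd} in the shifted fractional scale, by testing the discrete equation against $\calLs U^{k+1}$ through spectral calculus for $\calLs$. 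The initial data contribution is controlled via the assumption $\usf_0\in\mathbb{H}^{1+s}(\Omega)$, and the right-hand side contribution precisely requires $\fsf^\tau\in\ell^2(\Ws)$. This high-regularity discrete energy estimate—the fractional analog of the classical $L^2(0,T;H^2)\cap H^1(0,T;L^2)$ bound for backward Euler applied to the heat equation—is where the bulk of the technical work lies; once in hand, combining with the previous two steps completes the proof.
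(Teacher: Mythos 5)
Your decomposition via the weighted elliptic projector $G_{\T_\Y}$ is exactly the paper's starting point, and the observation $\theta^0=0$ and the error equation for the discrete component are fine. The problem is the step you yourself single out as the crux. After bounding $\|\delta^1\tr\rho^{k+1}\|_{\Hsd}$ by its $L^2(\Omega)$ norm and applying the projection estimate to $\delta^1 V^{k+1}$, you need
$\|\tr \delta^1 V^\tau\|_{\ell^2(\mathbb{H}^{1+s}(\Omega))} \lesssim \|\fsf^\tau\|_{\ell^2(\Ws)} + \|\usf_0\|_{\mathbb{H}^{1+s}(\Omega)}$,
and this is not a missing technical lemma: it is false under the stated hypotheses. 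From the trace recursion $\delta^1 U^{k+1} = \fsf^{k+1} - \calLs U^{k+1}$, an $\ell^2(\mathbb{H}^{1+s})$ bound on $\delta^1 U^\tau$ forces $\fsf^\tau \in \ell^2(\mathbb{H}^{1+s}(\Omega))$ and $U^\tau \in \ell^2(\mathbb{H}^{1+3s}(\Omega))$, which in turn (by the discrete maximal-regularity computation you propose, done spectrally) requires $\usf_0 \in \mathbb{H}^{1+2s}(\Omega)$; under the theorem's assumptions $\fsf \in L^2(0,T;\Ws)$, $\usf_0 \in \mathbb{H}^{1+s}(\Omega)$ the best one can get is $\delta^1 U^\tau \in \ell^2(\mathbb{H}^{1-s}(\Omega))$ — a full $2s$ derivatives short (take $\fsf=0$, $\usf_0\in\mathbb{H}^{1+s}$ only, to see the failure already at the level of the continuous problem, where $\int_0^T\|\partial_t\usf\|^2_{\mathbb{H}^{1+s}}\,\diff t \sim \|\usf_0\|^2_{\mathbb{H}^{1+2s}}$). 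A secondary imprecision compounds this: the estimate $\|\tr(w-G_{\T_\Y}w)\|_{L^2(\Omega)} \lesssim |\log N|^{2s}N^{-(1+s)/(n+1)}\|\tr w\|_{\mathbb{H}^{1+s}(\Omega)}$ is not what \cite[Proposition 4.7]{NOS3} gives; the rate requires weighted second-order regularity of $w$ in the cylinder, which is available only because $w$ solves the extension problem with sufficiently smooth Neumann datum, not because its trace is smooth. For $\delta^1 V^{k+1}$ that datum involves second discrete time differences, making matters worse.

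The source of the difficulty is that you difference the projection error in time. The paper avoids this: it applies the projection error estimate only to $V^{k}$ itself (the term $(I-G_{\T_\Y})V^\tau$, controlled by \cite[Proposition 4.7]{NOS3} combined with the regularity theory of \cite[Theorem 2.7]{NOS3}, which is exactly what $\fsf^\tau\in\ell^2(\Ws)$ and $\usf_0\in\mathbb{H}^{1+s}(\Omega)$ buy), and it bounds the discrete component $G_{\T_\Y}V^\tau - V^\tau_{\T_\Y}$ by the argument of \cite[Lemma 5.6]{BGRV}, a duality/summation-by-parts type argument that transfers the discrete time derivative off the projection error, so that only $\ell^2$-norms of the projection error itself are needed. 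To repair your proof you would either have to adopt such a duality argument for the $\theta$-equation, or establish a genuine negative-norm (Aubin--Nitsche) superconvergence estimate for $G_{\T_\Y}$ in $\Hsd$ so that the weaker regularity $\delta^1 U^\tau\in\ell^2(\Ws)$ suffices; as written, the naive energy estimate plus the claimed discrete regularity cannot yield the theorem under its stated assumptions.
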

\begin{proof}
We start by defining the error
\begin{equation}
\label{En}
E^\tau = (V^\tau - G_{\T_{\Y}} V^\tau) + (G_{\T_{\Y}} V^\tau - V_{\T_{\Y}}^\tau) = \theta^\tau + \rho_{\T_\Y}^\tau,
\end{equation}
where $G_{\T_{\Y}}$ is defined in \eqref{elliptic_projection}. We estimate $\theta^\tau$ by invoking the approximation properties \cite[Proposition 4.7]{NOS3} of $G_{\T_{\Y}}$ and the regularity results of \cite[Theorem 2.7]{NOS3}:
\[
  \| \tr \theta^{\tau} \|_{\ell^2(L^2(\Omega))} \lesssim |\log N|^{2s} N^{-\frac{1+s}{n+1}}
  \left( \| \fsf^\tau \|_{\ell^2(\Ws)} + \| \usf_0 \|_{\mathbb{H}^{1+s}(\Omega)} \right),
\]

The estimate of $\rho^\tau_{\T_\Y}$ follows along the same lines of \cite[Lemma 5.6]{BGRV}. For brevity, we skip the details.
\end{proof}

We collect the estimates of Theorems \ref{TH:order_1} and \ref{TH:order_1aux} to derive a $L^2(Q)$-error estimate.

\begin{theorem}[error estimate for $v$: $\gamma = 1$]
\label{TH:order_1fd}
Assume that $\gamma = 1$ and let $v$ and $V^{\tau}_{\T_{\Y}}$ solve \eqref{state_equation_truncated} and \eqref{fully_beta}, respectively. If $\fsf \in L^{\infty}(0,T;L^2(\Omega)) \cap L^2(0,T;\mathbb{H}^{1-s}(\Omega))$ and $\usf_0 \in \mathbb{H}^{1+s}(\Omega)$, then
\begin{align*}
% \label{eq:fd_estimate_1}
  \| \tr( v - V^{\tau}_{\T_{\Y}}) \|_{L^2(Q) }  &\lesssim  \tau 
  \left( \| \fsf \|_{L^{\infty}(0,T;L^2(\Omega))} + \| \usf_0 \|_{\mathbb{H}^{s}(\Omega)} \right)
  \\
  &+ |\log N|^{2s} N^{-\frac{1+s}{n+1}}
  \left( \| \fsf \|_{L^2(0,T;\mathbb{H}^{1-s}(\Omega))} + \| \usf_0 \|_{\mathbb{H}^{1+s}(\Omega)} \right),
\end{align*}
where the hidden constant does not depend on $v$, $V^{\tau}$ nor the problem data.
\end{theorem}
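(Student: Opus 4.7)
The plan is to proceed via a triangle inequality that splits the total error into a time-discretization error (controlled by the semi-discrete estimate of Theorem~\ref{TH:order_1}) and a space-discretization error measured as the distance between the semi-discrete and fully discrete approximations (controlled by Theorem~\ref{TH:order_1aux}). Writing
\[
 v - V^{\tau}_{\T_{\Y}} = (v - V^{\tau}) + (V^{\tau} - V^{\tau}_{\T_{\Y}}),
\]
the first summand is bounded directly in $L^2(Q)$ by Theorem~\ref{TH:order_1}, producing the $\tau$-term in the claimed estimate and requiring $\fsf \in L^\infty(0,T;L^2(\Omega))$ and $\usf_0 \in \Hs$.

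For the second summand I would use the identification \eqref{eq:equivalence} of the sequence $\{V^k - V_{\T_\Y}^k\}_{k=0}^{\K}$ with a piecewise constant function in time, under which
\[
 \| \tr(V^{\tau} - V^{\tau}_{\T_{\Y}}) \|_{L^2(Q)} = \| \tr(V^{\tau} - V^{\tau}_{\T_{\Y}}) \|_{\ell^2(L^2(\Omega))}.
\]
Theorem~\ref{TH:order_1aux} then yields the $|\log N|^{2s}N^{-(1+s)/(n+1)}$-factor multiplied by $\|\fsf^{\tau}\|_{\ell^2(\Ws)} + \|\usf_0\|_{\mathbb{H}^{1+s}(\Omega)}$.

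The one small residual task is to pass from the discrete-in-time data norm $\|\fsf^{\tau}\|_{\ell^2(\Ws)}$ to $\|\fsf\|_{L^2(0,T;\Ws)}$. Since $\fsf^{k+1} = \tau^{-1}\int_{t_k}^{t_{k+1}}\fsf(\xi)\diff\xi$, the triangle inequality in $\Ws$ and Cauchy--Schwarz in time give
\[
 \tau\|\fsf^{k+1}\|_{\Ws}^{2} \leq \int_{t_k}^{t_{k+1}}\|\fsf(\xi)\|_{\Ws}^{2}\diff\xi,
\]
and summing over $k$ yields $\|\fsf^{\tau}\|_{\ell^2(\Ws)} \leq \|\fsf\|_{L^2(0,T;\Ws)}$. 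Combining the two estimates gives the stated bound.

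There is no real obstacle here: the result is an immediate consequence of the two previously established error estimates, and the argument is essentially a triangle inequality. The slight subtlety is just the comparison of discrete and continuous time norms for the data, which is handled by Jensen/Cauchy--Schwarz as above. Accordingly, the proof in the paper should be quite short; I would simply state the splitting, cite Theorems~\ref{TH:order_1} and~\ref{TH:order_1aux}, note the norm identification and the data bound, and collect terms.
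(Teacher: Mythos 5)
Your proposal is correct and follows exactly the paper's route: the paper proves this theorem simply by collecting the estimates of Theorems~\ref{TH:order_1} and~\ref{TH:order_1aux} via the triangle inequality through the semi-discrete solution $V^\tau$, which is precisely your splitting. Your additional remarks on the $\ell^2$--$L^2(Q)$ norm identification and the bound $\|\fsf^\tau\|_{\ell^2(\Ws)} \leq \|\fsf\|_{L^2(0,T;\Ws)}$ are the (routine) details the paper leaves implicit, and they are handled correctly.
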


\begin{corollary}[error estimate for $\usf$: $\gamma = 1$]
\label{CR:order_1fd}
Assume that $\gamma =1$ and  let $\usf$ solve \eqref{fractional_heat} with $\zsf =0$ and $U^{\tau}_{\T_{\Omega}}$ be defined by \eqref{discrete_Ufd}. If $\fsf \in L^{\infty}(0,T;L^2(\Omega)) \cap L^2(0,T;\mathbb{H}^{1-s}(\Omega))$ and $\usf_0 \in \mathbb{H}^{1+s}(\Omega)$, then
\begin{align*}
% \label{eq:fd_estimate_1}
  \| \usf - U^{\tau}_{\T_{\Omega}} \|_{L^2(0,T; L^2(\Omega)) } &\lesssim  \tau 
  \left(  \| \fsf \|_{L^{\infty}(0,T;L^2(\Omega))} + \| \usf_0 \|_{\mathbb{H}^{s}(\Omega)} \right)
  \\
  &+ |\log N|^{2s} N^{-\frac{1+s}{n+1}} \left( \| \fsf \|_{L^2(0,T;\mathbb{H}^{1-s}(\Omega))} 
  + \| \usf_0 \|_{\mathbb{H}^{1+s}(\Omega)}\right),
\end{align*}
where the hidden constant does not depend on $v$, $V^{\tau}$ nor the problem data.
\end{corollary}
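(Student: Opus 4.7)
The proof proposal is essentially a short reduction to already-proved results, so the plan is to spell out the reduction carefully rather than prove something new from scratch.

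The strategy is to use the triangle inequality
\[
 \| \usf - U^{\tau}_{\T_{\Omega}} \|_{L^2(Q)} \le \| \usf - \tr v \|_{L^2(Q)} + \| \tr v - \tr V^{\tau}_{\T_{\Y}} \|_{L^2(Q)},
\]
where $v \in \V_\Y$ denotes the solution to the truncated state equation \eqref{state_equation_truncated} with $\rsf = 0$. The second term is controlled directly by Theorem~\ref{TH:order_1fd}, since $U^{\tau}_{\T_{\Omega}} = \tr V^{\tau}_{\T_{\Y}}$ by \eqref{discrete_Ufd}. This yields exactly the stated $\tau$ and $|\log N|^{2s} N^{-(1+s)/(n+1)}$ contributions, together with the correct data norms.

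For the first (truncation) term, I would invoke the Caffarelli--Silvestre equivalence of \S\ref{sub:stateequation} to write $\usf = \tr \ue$, where $\ue \in \V$ solves \eqref{state_equation_extended} with $\zsf = 0$, and then combine Proposition~\ref{pro:energyYinf} with the trace inequality \eqref{Trace_estimate} in the same way as in \cite[Lemma~4.3]{NOS3}. This delivers an estimate of the form
\[
 \| \tr \ue - \tr v \|_{L^2(Q)} \lesssim e^{-\sqrt{\lambda_1}\,\Y/2}\, \Lambda_{1}(\usf_0,\fsf),
\]
with $\Lambda_1(\usf_0,\fsf)$ controlled by $\| \usf_0 \|_{\Hs} + \| \fsf \|_{L^2(0,T;\Hsd)}$, both of which are bounded by the data norms appearing in the claim.

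The last step is to select $\Y$. Exactly as in \S\ref{sec:space_discretization} of \cite{NOS3}, I would take $\Y \simeq \log N$ with a constant large enough (depending on $\lambda_1$, $s$, and $n$) so that $e^{-\sqrt{\lambda_1}\,\Y/2} \lesssim N^{-(1+s)/(n+1)}$. This absorbs the truncation contribution into the finite element contribution, and the two bounds combine to give the stated estimate.

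The main obstacle is essentially bookkeeping rather than analysis: one must verify that the regularity hypotheses $\fsf \in L^\infty(0,T;L^2(\Omega)) \cap L^2(0,T;\mathbb{H}^{1-s}(\Omega))$ and $\usf_0 \in \mathbb{H}^{1+s}(\Omega)$ are enough both to apply Theorem~\ref{TH:order_1fd} and to make $\Lambda_1(\usf_0,\fsf)$ finite with the right dependence, and to confirm that the choice of $\Y$ is compatible with the graded mesh \eqref{graded_mesh} so that the constants in Theorem~\ref{TH:order_1fd} do not absorb the logarithmic factor in an uncontrolled way. Once this is checked, the triangle inequality closes the argument.
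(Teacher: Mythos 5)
Your proposal is correct and follows essentially the same route as the paper: the paper's proof also combines Theorem~\ref{TH:order_1fd} with the exponential truncation estimate of \cite[Lemma 4.3]{NOS3} and the choice $\Y \simeq \log N$ from \cite[Remark 5.5]{NOS} so that the truncation error is absorbed into the $N^{-\frac{1+s}{n+1}}$ term. Your additional bookkeeping on the data norms entering $\Lambda_1(\usf_0,\fsf)$ is consistent with the stated hypotheses, so nothing is missing.
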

\begin{proof}
The result is a consequence of Theorem \ref{TH:order_1fd} and \cite[Lemma 4.3]{NOS3}, in conjunction with the appropriate choice of $\Y$ explored in \cite[Remark 5.5]{NOS}.
\end{proof}

\section{Approximation of the fractional control problem}
\label{sec:approximation_control}

We propose an implicit fully-discrete scheme to approximate the solution of the fractional control problem \eqref{Jintro}--\eqref{cc}: piecewise constant functions for the control and, for the state, first degree tensor product finite elements in space, as described in \S\ref{sec:space_discretization}, and the finite difference discretization in time detailed in \S\ref{sec:time_discretization}.

As stated in Theorem~\ref{th:order_beta}, in order to have the error estimates \eqref{estimate_1} and \eqref{estimate_2} for the approximation of the state equation \eqref{fractional_heat}, we have to require that $\mathcal{A}(\usf_0,\fsf + \orsf) < \infty$. This strong $H^2$ in time regularity assumption is not satisfied by the optimal control $\orsf$, meaning that we are not able to apply the results of Theorem~\ref{th:order_beta}. This is in sharp contrast with the case $\gamma = 1$ which, according to Theorem~\ref{TH:order_1fd} only requires $\fsf, \orsf \in L^{\infty}(0,T;L^2(\Omega)) \cap L^{2}(0,T;\mathbb{H}^{1-s}(\Omega))$ which, by imposing \eqref{ab_condition} and invoking Remark~\ref{rm:regularity_control_r} and Theorem \ref{th:regofz}, is satisfied by the optimal control $\orsf$. Due to this regularity restriction we can obtain an error analysis for $\gamma = 1$ only. We remark that $L^2(Q)$-error estimates, for $s,\gamma \in (0,1)$ are not available in the literature, especially under the correct regularity assumptions. 
% In addition, when performing the error analysis, we will need to introduce a discrete adjoint equation and require it possesses certain approximation properties. This restricts our analysis to $s \geq \tfrac12$. Therefore, for the reasons detailed above, 
In \S\ref{sub:apriori_control} we will present error estimates for $\gamma = 1$ and $s \in (0,1)$, and
in \S\ref{sub:convergence} we will show the convergence, without rates, for the remaining range of parameters.

Finally, to simplify the exposition, in what follows we assume that $\asf$ and $\bsf$ are constants that satisfy \eqref{ab_condition}.

\subsection{An implicit fully discrete-scheme}
\label{sub:fd_control}
To discretize the control we introduce the finite element space of piecewise constant functions over $\T_\Omega$ 
\[
  \mathbb{Z}(\T_\Omega) = \left\{ Z \in L^\infty(\Omega): Z|_K \in \mathbb{P}_0(K), \quad \forall K \in \T_\Omega \right\},
\]
and the space of piecewise constant functions in time and space
\begin{equation}
\mathbb{Z}(\mathcal{T},\T_\Omega) = \left\{ Z^\tau \subset L^\infty(Q) : Z^k \in \mathbb{Z}(\T_\Omega) \right\}.
\end{equation}
We define the space of discrete admissible controls as follows: 
\begin{equation}
\label{eq:defofZadh}
  \Zad(\mathcal{T},\T_\Omega) = \Zad \cap \mathbb{Z}(\mathcal{T},\T_\Omega).
\end{equation}

It will be useful to introduce the $L^2(Q)$-orthogonal projection onto $\mathbb{Z}(\calT,\T_\Omega)$. The operator $\Pi_{\T_{\Omega}}^{\mathcal{T}}: L^2(Q) \rightarrow \mathbb{Z}(\mathcal{T},\T_\Omega)$ is defined by
\begin{equation}
\label{eq:or_pro}
  r \in L^2(Q): \quad  (r -  \Pi^{\mathcal{T}}_{\T_{\Omega}}  r, Z)_{L^2(Q)} = 0 \qquad \forall Z \in \mathbb{Z}(\mathcal{T},\T_\Omega),
\end{equation}
and, for all $r \in H^1(0,T;L^2(\Omega)) \cap L^2(0,T;H^1(\Omega))$, satisfies:
\begin{equation}
 \label{eq:or_pro_prop}
 \| r - \Pi^{\mathcal{T}}_{\T_{\Omega}}r \|_{L^2(Q)} \lesssim h_{\T_{\Omega}} \| \nabla_{x'} r \|_{L^2(Q)} + \tau \| \partial_t r \|_{L^2(Q)}.
\end{equation}
Notice also that, since $\asf$ and $\bsf$ are constant, $\Pi_{\T_\Omega}^\calT \Zad \subset \Zad(\calT, \T_\Omega)$.

We define the discrete functional
\[
J_{\T_{\Y}}^{\calT}(V^\tau_{\T_\Y},Z_{\T_\Omega}) = \frac{1}{2} \| \tr V_{\T_{\Y}}^{\tau} - \usf_d^{\tau}\|^2_{\ell^2(L^2(\Omega))} +  \frac{\mu}{2}\| Z^{\tau}_{\T_{\Omega}} \|^2_{\ell^2(L^2(\Omega))}
\]
where the $\ell^2$-norm is defined in \S\ref{sec:time_discretization}. The identification between a sequence $\phi^{\tau}$ and the piecewise constant function \eqref{eq:equivalence} will be used repeatedly  below. For instance, if $\usf_d^\tau = \usf_d$ we would have that $J_{\T_\Y}^{\calT}(w,r) = J(w,r)$ whenever $w^\tau = w$ and $r^\tau = r$, that is, the arguments are piecewise constant over $\calT$. This was already implicitly used in \eqref{eq:defofZadh}, when we defined $\Zad(\mathcal{T},\T_\Omega)$.

The numerical scheme reads: Find
$
 \min J_{\T_\Y}^{\calT}(V_{\T_{\Y}^\tau},Z_{\T_{\Omega}}^\tau), 
$
subject to the discrete state equation: initialize as in \eqref{initial_data_discrete} and for $k=0,\dots,\K-1$, let $V_{\T_{\Y}}^{k+1} \in \V(\T_{\Y})$ solve
\begin{equation}
\label{fully_state}
 ( \delta^\gamma \tr V_{\T_{\Y}}^{k+1} , \tr W )_{L^2(\Omega)}  + a_\Y(V_{\T_{\Y}}^{k+1},W) = \left\langle \fsf^{k+1} + Z_{\T_{\Omega}}^{k+1}, \tr W   \right\rangle,
\end{equation}
for all $W \in \V(\T_\Y)$ and the control constraints $Z^{\tau}_{\T_\Omega} \subset \Zad(\calT, \T_{\Omega})$. If $(\bar{V}_{\T_\Y}^\tau, \bar{Z}_{\T_\Omega}^\tau)$ denote the solution to this problem, setting
\begin{equation}
\label{eq:U_fd}
\bar{U}^{\tau}_{\T_\Omega} = \tr \bar{V}_{\T_{\Y}}^{\tau}
\end{equation}
we obtain a fully-discrete approximation $(\bar{U}^{\tau}_{\T_\Omega},\bar{Z}^{\tau}_{\T_{\Omega}}) \in \U(\T_{\Omega})^{\K}\times\Zad(\calT,\T_{\Omega})$ to the fractional control problem \eqref{Jintro}--\eqref{cc}.

\begin{remark}[locality]\rm
The main advantage of the scheme \eqref{fully_state} approximating the fractional control problem \eqref{Jintro}--\eqref{cc} via \eqref{eq:U_fd} is its \emph{local nature}.
\end{remark}

\subsection{A priori error analysis: $\gamma = 1$ and $s \in (0,1)$}
\label{sub:apriori_control}
Let us consider $s \in (0,1)$ and $\gamma = 1$ in \eqref{state_equation_extended}--\eqref{extended_adjoint} and provide an a priori error analysis for the fully-discrete scheme proposed in \S\ref{sub:fd_control}. To do so, we provide first order necessary and sufficient optimality conditions of the fully-discrete problem. We define the discrete adjoint problem: Find $P_{\T_{\Y}}^{\tau} \subset \V(\T_{\Y})$ such that $\tr P_{\T_{\Y}}^{\K} = 0$, and for $k = \K-1, \ldots, 0$, $P_{\T_{\Y}}^k \in \V(\T_\Y)$ solves
\begin{equation}
\label{fully_adjoint}
 ( \bar{\delta}^1 \tr P_{\T_{\Y}}^{k} , \tr W )_{L^2(\Omega)}  + a_\Y(P_{\T_{\Y}}^k,W) = \left\langle \tr V_{\T_{\Y}}^{k+1} - \usf_d^{k+1}, \tr W   \right\rangle,
\end{equation} 
for all $W \in \V(\T_{\Y})$. Here $\bar{\delta}^1$ denotes
$
  \bar{\delta}^1 \phi^k = -\tau^{-1} \left( \phi^{k+1} - \phi^k \right).
$
The optimality condition reads: $( \bar{V}^{\tau}_{\T_\Y}, \bar{Z}^{\tau}_{\T_{\Omega}})$ is optimal if and only if $\bar{V}^{\tau}_{\T_\Y}$ solves
\eqref{initial_data_discrete} and \eqref{fully_state} and 
\begin{equation}
\label{eq:op_discrete}
 ( \tr \bar{P}_{\T_{\Y}}^{\tau} + \mu \bar{Z}^{\tau}_{\T_{\Omega}}, Z -\bar{Z}^{\tau}_{\T_{\Omega}})_{L^2(Q)} \geq 0 \qquad 
 \forall Z \in \Zad(\mathcal{T},\T_\Omega),
\end{equation}
where $\bar{P}^{\tau}_{\T_{\Y}}$ solves \eqref{fully_adjoint}. Notice that \eqref{eq:op_discrete} can be equivalently written as
\[
  ( \tr \bar P_{\T_\Y}^k + \mu \bar{Z}^k_{\T_\Omega}, Z - \bar{Z}^k_{\T_\Omega} )_{L^2(\Omega)} \quad \forall Z \in \mathbb{Z}(\T_\Omega), \quad \asf \leq Z \leq \bsf,
  \quad \forall k = 1,\ldots,\K.
\]
To see this, it suffices to set $Z^\tau = Z \chi_{(t_{k-1},t_k]}$, with $Z \in \mathbb{Z}(\T_\Omega)$ and $\asf \leq Z \leq \bsf$. This greatly simplifies the implementation.

% Let us introduce the following problem: Find $\mathfrak{p} \in \mathbb{V}_\Y$ such that $\mathfrak{p}(T) = 0$ and, for a.e.~$t \in (0,T)$,
% \begin{equation}
% \label{adjoint_equation_aux}
% -\langle \tr  \partial_t {\frakp}, \tr \phi \rangle + a_{\Y}({\frakp},\phi)   = \langle \tr V^{\tau}_{\T_{\Y}}(\orsf) - \usf_d^\tau, \tr \phi \rangle,
% \qquad \forall \phi \in \HL(y^\alpha,\C_\Y).
% \end{equation}
Let us now introduce two auxiliary problems. The first one reads: Find $Q_{\T_{\Y}}^{\tau} \subset \V(\T_{\Y})$ such that $\tr Q_{\T_{\Y}}^{\K} = 0$ and, for $k = \K-1, \ldots, 0$, $Q_{\T_{\Y}}^k \in \V(\T_\Y)$ solves
\begin{equation}
\label{adjoint_aux1}
 ( \bar{\delta}^1 \tr Q_{\T_{\Y}}^{k} , \tr W )_{L^2(\Omega)}  + a_\Y(Q_{\T_{\Y}}^{k},W) = \left\langle \tr \bar{v}^{k+1} - \usf_d^{k+1}, \tr W   \right\rangle, 
\end{equation}
for all $W \in \V(\T_\Y)$, and where $\bar{v} = \bar{v}(\orsf)$ solves \eqref{state_equation_truncated}.
The second one is: Find $R_{\T_{\Y}}^{\tau} \subset \V(\T_{\Y})$ such that $\tr R_{\T_{\Y}}^{\K} = 0$, and for $k = \K-1, \ldots, 0$, $R_{\T_{\Y}}^k \in \V(\T_\Y)$ solves
\begin{equation}
\label{adjoint_aux2}
 ( \bar{\delta}^1 \tr R_{\T_{\Y}}^{k} , \tr W )_{L^2(\Omega)}  + a_\Y(R_{\T_{\Y}}^{k},W) = \left\langle \tr V^{k+1}_{\T_{\Y}}(\orsf) - \usf_d^{k+1}, \tr W   \right\rangle,
\end{equation}
for all $W \in \V(\T_\Y)$. These auxiliary problems will allow us to derive error estimates for the fully-discrete scheme proposed in \S \ref{sub:fd_control}.

\begin{lemma}[error estimate for the control: $\gamma = 1$ and $s \in (0,1)$]
\label{LM:control_error}
Let $\bar{\rsf}$ be the solution to the truncated optimal control problem of \S\ref{sec:control_truncated} and let $\bar{Z}^{\tau}_{\T_{\Omega}}$ be the solution to the fully-discrete optimal control problem of \S\ref{sub:fd_control}. Assume that $\usf_0 \in \mathbb{H}^{1+s}(\Omega)$ and, for every $\epsilon>0$, $\usf_d \in L^2(0,T;\mathbb{H}^{1-\epsilon}(\Omega)) \cap H^1(0,T;L^2(\Omega))$ and $\fsf \in H^1(0,T;\mathbb{H}^{1-\epsilon}(\Omega))$. Then
\begin{align*}
\| \bar{\rsf} - \bar{Z}^{\tau}_{\T_{\Omega}}  \|_{L^2(Q)} \lesssim \tau + |\log N|^{2s}N^{-\frac{1}{n+1}},
\end{align*}
where the hidden constant is independent of the discretization parameters but depends on the problem data.
\end{lemma}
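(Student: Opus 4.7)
The plan is to combine the continuous and discrete variational inequalities in the standard way and then reduce everything to already--established a priori bounds for the state and adjoint equations plus a projection estimate for $\bar\rsf$. First I would exploit the fact that $\asf,\bsf$ are constants, so that $\Pi^{\calT}_{\T_\Omega}\bar\rsf \in \Zad(\calT,\T_\Omega)$. Setting $\rsf = \bar Z^{\tau}_{\T_\Omega}$ in the truncated variational inequality \eqref{op_truncated} and $Z = \Pi^{\calT}_{\T_\Omega}\bar\rsf$ in the discrete one \eqref{eq:op_discrete}, and adding the two resulting inequalities, I obtain
\begin{equation*}
\mu\|\bar\rsf-\bar Z^{\tau}_{\T_\Omega}\|^{2}_{L^2(Q)}
\le \bigl(\tr(\bar p - \bar P^{\tau}_{\T_\Y}),\,\bar Z^{\tau}_{\T_\Omega}-\bar\rsf\bigr)_{L^2(Q)}
   + \bigl(\tr \bar P^{\tau}_{\T_\Y}+\mu\bar Z^{\tau}_{\T_\Omega},\,\Pi^{\calT}_{\T_\Omega}\bar\rsf-\bar\rsf\bigr)_{L^2(Q)}.
\end{equation*}
The last term is easy: by Galerkin orthogonality of $\Pi^{\calT}_{\T_\Omega}$ it reduces to $\bigl(\tr \bar P^{\tau}_{\T_\Y}+\mu\bar Z^{\tau}_{\T_\Omega} - \Pi^{\calT}_{\T_\Omega}(\tr \bar P^{\tau}_{\T_\Y}+\mu\bar Z^{\tau}_{\T_\Omega}),\Pi^{\calT}_{\T_\Omega}\bar\rsf-\bar\rsf\bigr)_{L^2(Q)}$, and using the approximation property \eqref{eq:or_pro_prop} together with the regularity of $\bar\rsf$, $\tr \bar p$ supplied by Theorem~\ref{th:regofz}, Remark~\ref{rk:reg_states} and Remark~\ref{rm:regularity_control_r} it is bounded by $C(\tau + h_{\T_\Omega})^{2}$, which is of the required order since $h_{\T_\Omega}\approx M^{-1}\lesssim N^{-1/(n+1)}$.

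The bulk of the work is in controlling $\tr(\bar p - \bar P^{\tau}_{\T_\Y})$, and for this I would split via the two auxiliary adjoint problems \eqref{adjoint_aux1} and \eqref{adjoint_aux2}:
\begin{equation*}
\bar p - \bar P^{\tau}_{\T_\Y} = \bigl(\bar p - Q^{\tau}_{\T_\Y}\bigr) + \bigl(Q^{\tau}_{\T_\Y}-R^{\tau}_{\T_\Y}\bigr) + \bigl(R^{\tau}_{\T_\Y}-\bar P^{\tau}_{\T_\Y}\bigr).
\end{equation*}
The first difference is the fully--discrete error for the adjoint equation with right hand side $\tr\bar v - \usf_d$ after the change of variable $t\mapsto T-t$; invoking the argument of Theorem~\ref{TH:order_1fd} (equivalently Corollary~\ref{CR:order_1fd}) and the regularity of $\bar p$ yields a bound of order $\tau + |\log N|^{2s}N^{-(1+s)/(n+1)}$. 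For the middle term, subtracting \eqref{adjoint_aux2} from \eqref{adjoint_aux1} I obtain a discrete backward parabolic equation with data $\tr(\bar v - V^{\tau}_{\T_\Y}(\bar\rsf))$, so discrete energy stability (the backward--in--time analogue of Lemma~\ref{LM:stab_sd}) and Theorem~\ref{TH:order_1fd} applied to $\bar v$ give the same order $\tau + |\log N|^{2s}N^{-(1+s)/(n+1)}$. The last difference $R^{\tau}_{\T_\Y}-\bar P^{\tau}_{\T_\Y}$ has data $\tr(V^{\tau}_{\T_\Y}(\bar\rsf) - \bar V^{\tau}_{\T_\Y})$, which in turn satisfies a discrete state equation with right hand side $\bar\rsf - \bar Z^{\tau}_{\T_\Omega}$; discrete stability then shows $\|\tr(R^{\tau}_{\T_\Y}-\bar P^{\tau}_{\T_\Y})\|_{L^{2}(Q)}\lesssim \|\bar\rsf-\bar Z^{\tau}_{\T_\Omega}\|_{L^{2}(Q)}$.

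Inserting these three bounds into the starting inequality, applying Cauchy--Schwarz and Young's inequality to the term arising from $R^{\tau}_{\T_\Y}-\bar P^{\tau}_{\T_\Y}$ so as to absorb a fraction of $\|\bar\rsf-\bar Z^{\tau}_{\T_\Omega}\|^{2}_{L^2(Q)}$ into the left hand side, then yields the claim after noting $\tau + |\log N|^{2s}N^{-(1+s)/(n+1)} \le \tau + |\log N|^{2s}N^{-1/(n+1)}$. The principal obstacle I expect is the second step in the splitting: I must verify a clean backward--in--time discrete energy estimate for the auxiliary fully--discrete adjoint problem (an analogue of Theorem~\ref{TH:stab_v_1} for $\bar\delta^{1}$ with terminal data zero), and I need to be careful that the regularity of $\bar p$ and of $\tr\bar v$ inherited from Theorem~\ref{th:regofz}, Remark~\ref{rk:reg_states}, and Remark~\ref{rm:regularity_control_r} is sufficient to legitimately apply Theorem~\ref{TH:order_1fd} in the middle term, since that estimate needs $\fsf\in L^{\infty}(0,T;L^2(\Omega))\cap L^2(0,T;\Ws)$--type data.
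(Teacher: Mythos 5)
Your setup (the two variational inequalities with $\rsf=\bar Z^\tau_{\T_\Omega}$ and $Z=\Pi^{\calT}_{\T_\Omega}\bar\rsf$), the splitting of $\bar p-\bar P^\tau_{\T_\Y}$ through the auxiliary problems \eqref{adjoint_aux1}--\eqref{adjoint_aux2}, and the use of Theorem~\ref{TH:order_1fd} plus discrete stability for the pieces $\bar p-Q^\tau_{\T_\Y}$ and $Q^\tau_{\T_\Y}-R^\tau_{\T_\Y}$ all coincide with the paper's argument. The genuine gap is in your treatment of the last piece, $R^\tau_{\T_\Y}-\bar P^\tau_{\T_\Y}$. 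Discrete stability does give $\|\tr(R^\tau_{\T_\Y}-\bar P^\tau_{\T_\Y})\|_{L^2(Q)}\lesssim\|\bar\rsf-\bar Z^\tau_{\T_\Omega}\|_{L^2(Q)}$, but then the corresponding contribution to the right-hand side is $C\,\|\bar\rsf-\bar Z^\tau_{\T_\Omega}\|^2_{L^2(Q)}$ with a generic stability constant $C$: both factors in the Cauchy--Schwarz product are the \emph{same} error, so Young's inequality produces no small parameter and absorption into $\mu\|\bar\rsf-\bar Z^\tau_{\T_\Omega}\|^2_{L^2(Q)}$ fails unless one knows $C<\mu$, which is not available. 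This is precisely why the paper does not estimate this term in norm at all: mimicking Step~2 of Lemma~\ref{LE:exp_convergence} at the discrete level (summation by parts between the discrete adjoint difference, whose data is $\tr(V^\tau_{\T_\Y}(\bar\rsf)-\bar V^\tau_{\T_\Y})$, and the discrete state difference, whose data is $\bar\rsf-\bar Z^\tau_{\T_\Omega}$) shows that
\begin{equation*}
\bigl(\tr(R^\tau_{\T_\Y}-\bar P^\tau_{\T_\Y}),\,\bar Z^\tau_{\T_\Omega}-\bar\rsf\bigr)_{L^2(Q)}
= -\,\|\tr(V^\tau_{\T_\Y}(\bar\rsf)-\bar V^\tau_{\T_\Y})\|^2_{\ell^2(L^2(\Omega))}\le 0,
\end{equation*}
so the term has a sign and is simply dropped. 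Without this structural observation your chain of estimates does not close.

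A secondary, fixable issue: in the projection term you apply the orthogonality of $\Pi^{\calT}_{\T_\Omega}$ and then invoke \eqref{eq:or_pro_prop} for $\tr\bar P^\tau_{\T_\Y}+\mu\bar Z^\tau_{\T_\Omega}$, but this is a piecewise-constant-in-time discrete function with no $H^1(0,T;L^2(\Omega))\cap L^2(0,T;H^1(\Omega))$ regularity; you must first add and subtract $\tr\bar p+\mu\bar\rsf$ (as the paper does in its Step~4, with the intermediate $\pm Q^\tau_{\T_\Y}$), use the regularity of $\bar p$ and $\bar\rsf$ for the projection error, and treat the remainder with the bounds already established — here your stability bound for $R^\tau_{\T_\Y}-\bar P^\tau_{\T_\Y}$ is harmless, since it is multiplied by the small factor $\|\Pi^{\calT}_{\T_\Omega}\bar\rsf-\bar\rsf\|_{L^2(Q)}$ and can be absorbed.
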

\begin{proof} 
We proceed in several steps.

\noindent \framebox{1} Setting $\rsf = \bar{Z}^{\tau}_{\T_\Omega}$ and $Z = \Pi^{\mathcal{T}}_{\T_{\Omega}} \orsf$ in \eqref{op_truncated} and \eqref{eq:op_discrete}, respectively, and adding the derived inequalities we arrive at
\[
 \mu \| \bar{\rsf} - \bar{Z}^{\tau}_{\T_{\Omega}}  \|_{L^2(Q)}^2 \leq (\tr(\bar{p} - \bar{P}^{\tau}_{\T_{\Y}}),\bar{Z}^{\tau}_{\T_\Omega} - \orsf )_{L^2(Q)} + (\tr \bar{P}^{\tau}_{\T_{\Y}} + \mu \bar{Z}^{\tau}_{\T_\Omega},\Pi^{\mathcal{T}}_{\T_{\Omega}} \orsf - \orsf )_{L^2(Q)},
\]
% where $\bar{p}$ and $\bar{P}^{\tau}_{\T_{\Y}}$ solve \eqref{truncated_adjoint} and \eqref{fully_adjoint}, respectively 
where $\Pi^{\mathcal{T}}_{\T_{\Omega}}$ is defined in \eqref{eq:or_pro}. 

\noindent \framebox{2} Using the solution to \eqref{adjoint_aux1} we write $\bar{p} - \bar{P}^{\tau}_{\T_{\Y}} = (\bar{p} - Q^{\tau}_{\T_{\Y}}) + (Q^{\tau}_{\T_{\Y}} - \bar{P}^{\tau}_{\T_{\Y}})$. The first term is estimated by using the results of Theorem \ref{TH:order_1fd} as follows:
\begin{multline*}
\| \tr(\bar{p} - Q^{\tau}_{\T_{\Y}}) \|_{L^2(Q)} \lesssim
\tau \left( \| \tr \bar{v}  \|_{L^{\infty}(0,T;L^2(\Omega))} + \| \usf_d  \|_{L^{\infty}(0,T;L^2(\Omega))} 
+ \| \usf_0 \|_{\mathbb{H}^{s}(\Omega)} \right)
\\
+ |\log N|^{2s} N^{-\frac{1+s}{n+1}}
\left( \| \tr \bar{v}\|_{L^2(0,T;\mathbb{H}^{1-s}(\Omega))} + \| \usf_d \|_{L^2(0,T;\mathbb{H}^{1-s}(\Omega))}+ 
\| \usf_0 \|_{\mathbb{H}^{1+s}(\Omega)} \right).
\end{multline*}
Where we used that $\tr \bar v \in L^{\infty}(0,T;L^2(\Omega)) \cap L^2(0,T;\Ws)$, which follows from Remark~\ref{rk:reg_states} and Remark~\ref{rm:regularity_control_r}.

\noindent \framebox{3} To estimate the difference $\tr(Q^{\tau}_{\T_{\Y}} - \bar{P}^{\tau}_{\T_{\Y}})$,
we write $Q^{\tau}_{\T_{\Y}} - \bar{P}^{\tau}_{\T_{\Y}} =  (Q^{\tau}_{\T_{\Y}} - R^{\tau}_{\T_{\Y}})
+ (R^{\tau}_{\T_{\Y}} - \bar{P}^{\tau}_{\T_{\Y}})$, where $R^{\tau}_{\T_{\Y}}$ solves \eqref{adjoint_aux2}.
Employing the stability estimate established in \cite[Lemma 5.1]{NOS3} and Theorem \ref{TH:order_1fd}, we arrive at
\begin{align*}
\| \tr(Q^{\tau}_{\T_{\Y}} - R^{\tau}_{\T_{\Y}})\|_{L^2(Q)} & \lesssim 
\| \tr( \bar{v} - V^{k+1}_{\T_{\Y}}(\orsf) )\|_{L^2(Q)}
\\
& \lesssim \tau \left( \| \orsf \|_{L^{\infty}(0,T;L^2(\Omega))} + \| \usf_0 \|_{\mathbb{H}^{s}(\Omega)} \right)
\\
& + |\log N|^{2s} N^{-\frac{1+s}{n+1}} \left( \| \orsf \|_{L^2(0,T;\mathbb{H}^{1-s}(\Omega))} + \| \usf_0 \|_{\mathbb{H}^{1+s}(\Omega)} \right).
\end{align*}
To handle the term $R^{\tau}_{\T_{\Y}} - \bar{P}^{\tau}_{\T_{\Y}}$ we invoke the discrete counterpart of Step 2 in Lemma~\ref{LE:exp_convergence}, that is an argument based on summation by parts, to arrive at
\[
  ( \tr (R^{\tau}_{\T_{\Y}} - \bar{P}^{\tau}_{\T_{\Y}}), \bar{Z}^{\tau}_{\T_\Omega} - \orsf^\tau )_{L^2(Q)} \leq 0.
\]
\noindent \framebox{4} Using the solutions to \eqref{adjoint_aux1} and \eqref{adjoint_aux2} we write 
\begin{multline*}
  (\tr \bar{P}^{\tau}_{\T_{\Y}} + \mu \bar{Z}^{\tau}_{\T_\Omega}, \Pi^{\mathcal{T}}_{\T_{\Omega}} \orsf - \orsf )_{L^2(Q)} = 
  ( \tr \bar p + \mu \orsf, \Pi_{\T_\Omega}^\calT \orsf - \orsf )_{L^2(Q)} \\
  + ( \tr (\bar{P}_{\T_\Y}^\tau \pm Q_{\T_\Y}^\tau - \bar p), \Pi_{\T_\Omega}^\calT \orsf - \orsf )_{L^2(Q)}
  + \mu (\bar{Z}_{\T_\Omega}^\tau - \orsf, \Pi_{\T_\Omega}^\calT \orsf - \orsf )_{L^2(Q)}
  = \textrm{I} + \textrm{II} + \textrm{III}.
\end{multline*}
Using the properties of the projector $\Pi_{\T_\Omega}^\calT$ and the smoothness of $\bar p$ and $\orsf$ we have
\begin{align*}
\textrm{I} & = ( \tr \bar{p} + \mu \orsf - \Pi^{\mathcal{T}}_{\T_{\Omega}}( \tr \bar{p} + \mu \orsf ), \Pi^{\mathcal{T}}_{\T_{\Omega}} \orsf - \orsf )
\lesssim \big( \tau \|\tr \bar{p} + \mu \orsf \|_{H^1(0,T;L^2(\Omega))} 
\\
& + h_{\T_{\Omega}}\|\tr \bar{p} + \mu \orsf \|_{L^2(0,T;H^1(\Omega))} \big) 
( \tau \| \orsf \|_{H^1(0,T;L^2(\Omega))}  + h_{\T_{\Omega}} \| \orsf \|_{L^2(0,T;H^1(\Omega))}).
\end{align*}
The term $\textrm{II}$ can be handled by repeating the arguments of Steps 2 and 3, while $\textrm{III}$ is controlled by a trivial aplicaiton of the Cauchy Schwarz inequality.

% \EO{We invoke the reduced and discrete cost functional $j_{\T_{\Y}}^{\tau}$, in conjunction with its continuous counterpart, to write
% \begin{align*}
% (\tr \bar{P}^{\tau}_{\T_{\Y}} + \mu \bar{Z}^{\tau}_{\T_\Omega}, \Pi^{\mathcal{T}}_{\T_{\Omega}} \orsf - \orsf )_{L^2(Q)} = ( j_{\T_{\Y}}^{'\tau}( \bar{Z}^{\tau}_{\T_\Omega} ) - j_{\T_{\Y}}^{'\tau}( \orsf ), \Pi^{\mathcal{T}}_{\T_{\Omega}} \orsf - \orsf ) 
% \\
% + ( j_{\T_{\Y}}^{'\tau}( \orsf ) - j'( \orsf ), \Pi^{\mathcal{T}}_{\T_{\Omega}} \orsf - \orsf ) 
% + ( j'( \orsf ), \Pi^{\mathcal{T}}_{\T_{\Omega}} \orsf - \orsf ) 
% \end{align*}
% The quadratic structure of $j_{\T_{\Y}}^{\tau}$ imply the Lipschitz continuity of $j_{\T_{\Y}}^{'\tau}$, which
% in conjunction with the estimate \eqref{eq:or_pro_prop}, yield
% \begin{align*}
% |\textrm{I}| & \lesssim \| \orsf - \bar{Z}^{\tau}_{\T_\Omega}  \|_{L^2(Q)} \| \Pi^{\mathcal{T}}_{\T_{\Omega}} \orsf - \orsf \|_{L^2(Q)}
% \\
% & \lesssim ( \tau \| \orsf \|_{H^1(0,T;L^2(\Omega))}  + h_{\T_{\Omega}} \| \orsf \|_{L^2(0,T;H^1(\Omega))})
% \| \orsf - \bar{Z}^{\tau}_{\T_\Omega}  \|_{L^2(Q)}.
% \end{align*}
% The term \textrm{II} can be estimated using the techniques developed in Step 2 and 3. To estimate the term $\textrm{III}$, we invoke Remark \ref{rm:regularity_control_r} to conclude that $\tr \bar{p} + \mu \orsf$ belongs to $H^1(0,T;L^2(\Omega))\cap L^2(0,T;H^1(\Omega))$ and utilize the properties of the orthogonal projection \eqref{eq:or_pro} to arrive at
% }

\noindent \framebox{5} The assertion follows from collecting all the estimates we obtained in previous steps and recalling that $h_{\T_\Omega} \approx N^{-\frac1{n+1}}$.
\end{proof}

% \begin{remark}[suboptimality] \rm
% Note that suboptimality in Lemma~\ref{LM:control_error} is a sole consequence of step \framebox{5}. More precisely of the estimate
% \[
%   \| \tr (\bar{P}^{\tau}_{\T_{\Y}} - \Pi^{\mathcal{T}}_{\T_{\Omega}} \bar{P}^{\tau}_{\T_{\Y}}) \|_{L^2(Q)}
%   \lesssim h_{\T_\Omega}^s
%   \| \tr \bar{P}^{\tau}_{\T_{\Y}} \|_{L^2(0,T;\Hs)}.
% \]
% If uniform $L^2(0,T;H^1_0(\Omega))$ bounds were available for $\tr \bar{P}^{\tau}_{\T_{\Y}}$ this estimate would be of order $\mathcal{O}(h_{\T_\Omega})$ and we could conclude optimal error estimates for the control.
% \end{remark}

On the basis of of Lemma~\ref{LM:control_error} we derive the following important result.

\begin{theorem}[control error estimates: $s \in (0,1)$ and $\gamma = 1$]
\label{thm:f_c_est}
Let $\bar{\zsf}$ be the solution to the space-time fractional optimal control problem \eqref{Jintro}--\eqref{cc} and let $\bar{Z}^{\tau}_{\T_{\Omega}}$ be the solution to the fully-discrete optimal control problem of \S\ref{sub:fd_control}. In the framework of Lemma \ref{LM:control_error}, we have the following error estimate
\begin{equation*}
\| \ozsf - \bar{Z}^{\tau}_{\T_{\Omega}} \|_{L^2(Q)} \lesssim \tau + |\log N|^{2s} N^{-\frac{1}{n+1}},
\end{equation*}
where the hidden constant is independent of the discretization parameters but depends on the problem data.
\end{theorem}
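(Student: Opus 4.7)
The plan is to obtain the final estimate by a simple triangle inequality splitting the error into a truncation part and a discretization part, and then to balance the truncation parameter $\Y$ against the spatial discretization parameter $N$.

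First I would write
\[
  \| \ozsf - \bar{Z}^{\tau}_{\T_{\Omega}} \|_{L^2(Q)}
  \leq \| \ozsf - \orsf \|_{L^2(Q)}
  + \| \orsf - \bar{Z}^{\tau}_{\T_{\Omega}} \|_{L^2(Q)},
\]
where $\orsf$ denotes the solution of the truncated control problem of \S\ref{sec:control_truncated}. The second term is controlled directly by Lemma~\ref{LM:control_error} under the stated regularity assumptions on $\usf_0$, $\usf_d$ and $\fsf$, and contributes exactly $\tau + |\log N|^{2s}N^{-1/(n+1)}$.

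For the first term I would invoke the exponential convergence estimate \eqref{control_exp} of Lemma~\ref{LE:exp_convergence}, together with the a priori control of $\|\orsf\|_{L^2(0,T;\Hsd)}$ that follows from $\orsf \in \Zad$ and the hypotheses on $\fsf$, $\usf_d$. This yields
\[
  \| \ozsf - \orsf \|_{L^2(Q)} \lesssim e^{-\sqrt{\lambda_1}\,\Y/2} \, C_{\textrm{data}},
\]
with $C_{\textrm{data}}$ depending only on the problem data. Choosing the truncation parameter as $\Y \simeq \tfrac{2}{\sqrt{\lambda_1}(n+1)} \log N$ (the standard choice from \cite[Remark 5.5]{NOS} already alluded to in Corollary~\ref{CR:order_1fd}) makes this term bounded by $N^{-1/(n+1)}$, which is dominated by the second term.

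The main obstacle, as already anticipated in the treatment of the state equation, is the consistent choice of $\Y$: one must verify that the $\Y$ used when invoking Lemma~\ref{LM:control_error} (which itself relies on the fully discrete state/adjoint error estimates of Theorem~\ref{TH:order_1fd}, and whose hidden constants depend on $\Y$ through the truncated solutions $\bar v$, $\bar p$) is compatible with the $\Y$ chosen to dispose of the exponential factor in \eqref{control_exp}. This is handled exactly as in \cite[Remark 5.5]{NOS}: the regularity bounds from Remark~\ref{rk:reg_states} and Remark~\ref{rm:regularity_control_r} are independent of $\Y$, so picking $\Y \simeq \log N$ with the constant above absorbs the truncation error into the discretization term without altering any hidden constants. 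Collecting the two contributions completes the proof.
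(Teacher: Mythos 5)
Your proposal is correct and follows essentially the same route as the paper: the paper's proof is precisely the combination of Lemma~\ref{LE:exp_convergence} (truncation error, exponential in $\Y$) and Lemma~\ref{LM:control_error} (discretization error), with $\Y \simeq \log N$ chosen as in \cite[Remark 5.5]{NOS} so that the exponential term is absorbed into $N^{-1/(n+1)}$. Your explicit triangle-inequality splitting and the choice $\Y \simeq \tfrac{2}{\sqrt{\lambda_1}(n+1)}\log N$ simply spell out what the paper cites in one line.
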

\begin{proof} 
The result follows from Lemmas \ref{LE:exp_convergence} and \ref{LM:control_error}, in conjunction with an appropriate selection of the parameter $\Y$, as detailed in \cite[Remark 5.5]{NOS} and \cite[Corollary 5.17]{AO}.
\end{proof}

We conclude with an error estimate for the state in the $L^2(0,T;\Hs)$-norm.

\begin{theorem}[state error estimates: $s \in (0,1)$ and $\gamma = 1$]
\label{thm:stateerror}
Let $\bar{\usf}$ be the optimal state of the space-time fractional optimal control problem \eqref{Jintro}--\eqref{cc} and let $\bar{U}^{\tau}_{\T_{\Omega}}$ be defined as in \eqref{eq:U_fd}. In the framework of Lemma \ref{LM:control_error}, we have the following error estimate
\[
  \| \ousf - \bar{U}^\tau_{\T_\Omega} \|_{L^2(0,T;\Hs)} \lesssim \tau + |\log N|^{2s} N^{\frac{-1}{n+1}}
\]
where the hidden constant is independent of the discretization parameters but depends on the problem data.
\end{theorem}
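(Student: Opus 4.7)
The plan is to decompose the error via triangle inequality and then control each piece separately, invoking the exponential truncation estimate, the state equation discretization error, and the control error from Lemma~\ref{LM:control_error}. Using the Caffarelli--Silvestre identification $\ousf = \tr \bar{\ue}$ and the definition $\bar{U}^{\tau}_{\T_{\Omega}} = \tr \bar V^\tau_{\T_\Y}$, I would write
\[
   \ousf - \bar{U}^{\tau}_{\T_{\Omega}}
   = \underbrace{\tr(\bar{\ue} - \bar v)}_{\textrm{I}}
   + \underbrace{\tr(\bar v - V^{\tau}_{\T_{\Y}}(\orsf))}_{\textrm{II}}
   + \underbrace{\tr(V^{\tau}_{\T_{\Y}}(\orsf) - \bar V^{\tau}_{\T_{\Y}})}_{\textrm{III}},
\]
where $V^{\tau}_{\T_{\Y}}(\orsf)$ denotes the fully-discrete state driven by the \emph{continuous} optimal control $\orsf$ of the truncated problem.

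Term I is the truncation error, handled by the exponential decay estimate \eqref{state_exp} of Lemma~\ref{LE:exp_convergence} combined with the trace estimate \eqref{Trace_estimate}. Choosing $\Y \asymp \log N$, as in \cite[Remark~5.5]{NOS}, makes this contribution of order $N^{-1/(n+1)}$, which is absorbed by the other terms. Term II is the fully-discrete error for the state equation with the fixed forcing $\fsf + \orsf$; here I would invoke the energy-norm analogue of Theorem~\ref{TH:order_1fd} (obtained by the same semidiscrete/fully-discrete splitting as in Theorems~\ref{TH:order_1}--\ref{TH:order_1aux}, but measuring $v - V^\tau_{\T_\Y}$ in $\ell^2(\HL(y^\alpha,\C_\Y))$ rather than $L^2(Q)$) and then apply the trace inequality \eqref{Trace_estimate}. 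The required regularity of $\orsf$ and of the exact states $\tr \bar v$, $\tr \bar p$ is supplied by Theorem~\ref{th:regofz}, Remark~\ref{rk:reg_states}, and Remark~\ref{rm:regularity_control_r}, giving a bound of the form $\tau + |\log N|^{s} N^{-1/(n+1)}$.

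Term III is controlled by stability of the discrete state map. The difference $V^{\tau}_{\T_{\Y}}(\orsf) - \bar V^{\tau}_{\T_{\Y}}$ solves the discrete equation \eqref{fully_state} with zero initial data and forcing $\orsf - \bar Z^\tau_{\T_\Omega}$, so the discrete energy estimate of \cite[Lemma~5.1]{NOS3} together with the trace inequality yields
\[
  \| \tr(V^{\tau}_{\T_{\Y}}(\orsf) - \bar V^{\tau}_{\T_{\Y}}) \|_{L^2(0,T;\Hs)}
  \lesssim \| \orsf - \bar Z^\tau_{\T_\Omega} \|_{L^2(0,T;\Hsd)}
  \lesssim \| \orsf - \bar Z^\tau_{\T_\Omega} \|_{L^2(Q)},
\]
and Lemma~\ref{LM:control_error} then delivers $\tau + |\log N|^{2s} N^{-1/(n+1)}$. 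Collecting all three contributions and balancing $\Y$ as above gives the announced rate.

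The main obstacle I anticipate is Term~II: Theorem~\ref{TH:order_1fd} is phrased in $L^2(Q)$, whereas the target norm $L^2(0,T;\Hs)$ forces an upgrade to the weighted energy norm on the cylinder. This requires re-running the semidiscrete/fully-discrete error analysis of \S\ref{sec:fully_scheme_1} with $\phi = \bar e(t)$ replaced by $\bar e(t)$ tested against $a_\Y(\cdot,\cdot)$-coercivity rather than mass, and verifying that the hypotheses on $\fsf$ and $\usf_0$ from Lemma~\ref{LM:control_error} suffice to reproduce the approximation rate $|\log N|^s N^{-1/(n+1)}$ in the energy norm; the logarithmic factor then is subsumed into the weaker $|\log N|^{2s}$ already appearing in Term~III.
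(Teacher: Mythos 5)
Your decomposition and the treatment of each term coincide with the paper's own proof: the truncation piece is handled by \eqref{state_exp}, the fixed-control discretization piece by an energy-norm error estimate for the fully discrete state equation, and the control-difference piece by discrete stability combined with the control error estimate. The only difference is cosmetic: the energy-norm analogue of Theorem~\ref{TH:order_1fd} that you propose to re-derive for Term II is exactly \cite[Theorem 5.4]{NOS3}, which the paper simply cites under the hypothesis $\fsf + \orsf \in H^1(0,T;L^2(\Omega))$ (guaranteed by Remark~\ref{rm:regularity_control_r}), so no re-run of the analysis in \S\ref{sec:fully_scheme_1} is actually needed.
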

\begin{proof}
We first write
\[
\| \ousf - \bar{U}_{\T_\Omega}^\tau \|_{L^2(0,T;\Hs)} \leq \| \tr(\bar{\ue} - \bar v) \|_{L^2(0,T;\Hs)}
  + \| \tr \bar{v} - \bar{U}^\tau_{\T_\Omega} \|_{L^2(0,T;\Hs)},
\]
and note that the first term is controlled in \eqref{state_exp}. The second term is handled by noticing that
\begin{multline*}
  \| \tr\bar{v} - \bar{U}_{\T_\Omega}^\tau  \|_{L^2(0,T;\Hs)} =
  \| \tr (\bar{v} - \bar{V}^\tau_{\T_\Y} ) \|_{L^2(0,T;\Hs)} \\
  \leq
  \| \tr( \bar{v} - V^\tau_{\T_\Y}(\orsf) )  \|_{L^2(0,T;\Hs)}
  + \| \tr( V^\tau_{\T_\Y}(\orsf) - \bar{V}^\tau_{\T_\Y} ) \|_{L^2(0,T;\Hs)}.
\end{multline*}
Since $\fsf + \orsf \in H^1(0,T;L^2(\Omega))$, the first term on the right hand side of this inequality is estimated using the error estimates for the discrete scheme presented in \cite[Theorem 5.4]{NOS3}.
The second one can be handled by invoking the stability of the discrete scheme and the error estimates of Theorem~\ref{thm:f_c_est}. Collecting these bounds we obtain the result.
\end{proof}

\subsection{Convergence}
\label{sub:convergence}
Let us now consider the case when either $\gamma,s \in (0,1)$, or the problem data is not smooth enough to yield the error estimates of \S\ref{sub:apriori_control} and elucidate the general convergence properties of the fully discrete scheme. Notice that we are not only approximating the state equation via discretization, but we are also approximating the cost, so convergence of discrete optimal controls to the continuous one is not immediate. To begin, as in Definition~\ref{def:fractional_operator}, we introduce the discrete control to state operator
\begin{equation}
  \mathbf{S}_{\T_\Y}^\calT : \mathbb{Z}(\calT,\T_\Omega) \ni Z_{\T_\Omega}^\tau \mapsto V_{\T_\Y}^\tau \subset \V(\T_\Y),
\end{equation}
where $V^\tau_{\T_\Y}$ solves the discrete state equation \eqref{fully_state}. Notice that the stability estimates implicit in Theorems~\ref{th:order_beta} and \ref{TH:order_1} (\cite[Lemma 5.1]{NOS3}) yield that, for all $\gamma,s \in (0,1)$ the family $\{\mathbf{S}_{\T_\Y}^\calT\}$ is uniformly bounded for  $\fsf,Z_{\T_\Omega}^\tau \in L^2(Q)$. Moreover, the error estimates imply the pointwise convergence of these operators so that, by the uniform boundedness principle, they converge uniformly to $\mathbf{S}$. This will be crucial in showing convergence.

With the discrete control to state operators at hand, like in \eqref{f_opt}, we define the reduced cost functional by
\begin{equation}
\label{eq:reduced_cost}
  \begin{aligned}
    F_{\T_\Y}^{\calT}( Z_{\T_\Omega}^\tau) &=
    J_{\T_\Y}^{\calT}( \mathbf{S}_{\T_\Y}^\calT Z_{\T_\Omega}^\tau, Z_{\T_\Omega}^\tau ) \\
    &= \frac12 \| \tr \mathbf{S}_{\T_\Y}^\calT Z_{\T_\Omega}^\tau - \usf_d^\tau \|_{\ell^2(L^2(\Omega))}^2
    + \frac\mu2 \| Z_{\T_\Omega}^\tau \|_{\ell^2(L^2(\Omega))}^2.
  \end{aligned}
\end{equation}

The convergence of the fully discrete scheme is the content of the next result.

\begin{theorem}[convergence]
\label{th:gamma}
The family $\{ \bar{Z}_{\T_\Omega}^\tau \}_{\T_\Omega \in \Tr_\Omega, \tau>0}$ is uniformly bounded and it contains a subsequence that converges $L^2(Q)$-weak to $\orsf$, the solution to the truncated optimal control problem. Moreover, if $\gamma = 1$ the convergence is strong.
\end{theorem}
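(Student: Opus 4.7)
The plan is to follow a classical argument that combines uniform boundedness, weak compactness of the control set, weak lower semicontinuity of the cost, and the (pointwise) convergence of the discrete control-to-state operators $\mathbf{S}^\calT_{\T_\Y}$ to $\mathbf{S}$ established through the a priori error estimates of Theorems~\ref{th:order_beta} and \ref{TH:order_1fd}.

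\textbf{Step 1 (uniform boundedness and weak compactness).} Since every element of $\Zad$ is pinched between $\asf$ and $\bsf$ in $L^2(Q)$, the set $\Zad$ is bounded in $L^2(Q)$. As $\bar Z^\tau_{\T_\Omega} \in \Zad(\calT,\T_\Omega) \subset \Zad$, the family $\{\bar Z^\tau_{\T_\Omega}\}$ is uniformly bounded. Banach--Alaoglu supplies a subsequence (not relabeled) with $\bar Z^\tau_{\T_\Omega} \rightharpoonup \tilde \rsf$ in $L^2(Q)$, and since $\Zad$ is convex and closed, hence weakly closed, $\tilde \rsf \in \Zad$.

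\textbf{Step 2 (identification $\tilde \rsf = \orsf$).} Given an arbitrary $\rsf \in \Zad$, set $Z^\tau_{\T_\Omega} = \Pi^\calT_{\T_\Omega} \rsf$, which lies in $\Zad(\calT,\T_\Omega)$ because $\asf,\bsf$ are constant. Estimate \eqref{eq:or_pro_prop} combined with density of smooth functions yields $Z^\tau_{\T_\Omega} \to \rsf$ strongly in $L^2(Q)$. Optimality of $\bar Z^\tau_{\T_\Omega}$ gives
\[
F^\calT_{\T_\Y}(\bar Z^\tau_{\T_\Omega}) \leq F^\calT_{\T_\Y}(Z^\tau_{\T_\Omega}).
\]
The stability of scheme \eqref{fully_state} combined with the error bounds of Theorems~\ref{th:order_beta} and \ref{TH:order_1fd} implies that $\{\mathbf{S}^\calT_{\T_\Y}\}$ is uniformly bounded as a family of linear operators acting on $L^2(Q)$, and that $\mathbf{S}^\calT_{\T_\Y} \rsf \to \mathbf{S}\rsf$ on a dense subset of sufficiently smooth controls; a Banach--Steinhaus argument then yields uniform convergence on compact sets of $L^2(Q)$. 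This passes the right-hand side to the limit, $\lim F^\calT_{\T_\Y}(Z^\tau_{\T_\Omega}) = F(\rsf)$, where $F$ denotes the reduced truncated cost. For the left-hand side, decompose
\[
\mathbf{S}^\calT_{\T_\Y} \bar Z^\tau_{\T_\Omega} - \mathbf{S} \tilde \rsf = \left(\mathbf{S}^\calT_{\T_\Y} - \mathbf{S}\right)\bar Z^\tau_{\T_\Omega} + \mathbf{S}\left(\bar Z^\tau_{\T_\Omega} - \tilde \rsf\right);
\]
the first term tends to zero strongly along a subsequence by the uniform convergence applied to the bounded family $\{\bar Z^\tau_{\T_\Omega}\}$, and the second converges weakly to zero by the linearity and continuity of $\mathbf{S}$. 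Weak lower semicontinuity of the $L^2$-norm then delivers
\[
F(\tilde \rsf) \leq \liminf F^\calT_{\T_\Y}(\bar Z^\tau_{\T_\Omega}) \leq \lim F^\calT_{\T_\Y}(Z^\tau_{\T_\Omega}) = F(\rsf) \quad \forall \rsf \in \Zad,
\]
so $\tilde \rsf$ minimizes $F$ over $\Zad$ and, by uniqueness (Theorem~\ref{TH:optimal_control} transferred to the truncated problem), $\tilde \rsf = \orsf$.

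\textbf{Step 3 (strong convergence when $\gamma = 1$).} Taking $\rsf = \orsf$ in the chain above yields $\lim F^\calT_{\T_\Y}(\bar Z^\tau_{\T_\Omega}) = F(\orsf)$. When $\gamma = 1$, Theorem~\ref{TH:order_1fd} supplies the missing $L^2(Q)$-state error estimate which, combined with the already-established weak convergence, implies the convergence of $\|\tr \bar V^\tau_{\T_\Y} - \usf_d^\tau\|_{\ell^2(L^2(\Omega))}^2$ to $\|\tr \bar v(\orsf) - \usf_d\|_{L^2(Q)}^2$. Subtracting this from the convergence of $F^\calT_{\T_\Y}(\bar Z^\tau_{\T_\Omega})$ gives $\|\bar Z^\tau_{\T_\Omega}\|_{L^2(Q)} \to \|\orsf\|_{L^2(Q)}$. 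The Radon--Riesz property in the Hilbert space $L^2(Q)$ upgrades weak to strong convergence.

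\textbf{Main obstacle.} The most delicate point is handling the left-hand side limit for $\gamma \in (0,1)$: Theorem~\ref{th:order_beta} controls $\tr(v^\tau - V^\tau_{\T_\Y})$ only in the $I^{1-\gamma}_t$-weighted time norm and in the energy norm, not directly in $L^2(Q)$. Extracting $L^2(Q)$-weak state convergence therefore relies on the uniform stability bound (\cite[Lemma 5.1]{NOS3}) to obtain a weakly convergent subsequence of states and on density arguments to identify its limit with $\tr \mathbf{S}\tilde \rsf$. This is precisely why norm convergence of the state, and thus the Radon--Riesz upgrade to strong control convergence, is available only in the case $\gamma = 1$.
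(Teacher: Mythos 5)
Your Step 1 and Step 2 follow, in essence, the paper's own route: the paper phrases the weak-convergence half as a $\Gamma$-convergence argument (liminf inequality via the pointwise/uniform convergence of $\mathbf{S}_{\T_\Y}^{\calT}$ to $\mathbf{S}$, recovery sequence $\Pi_{\T_\Omega}^{\calT}\rsf \to \rsf$, equicoercivity, uniqueness of the minimizer), and you carry out the same argument by hand; that part is fine. One small remark: in your decomposition you claim $(\mathbf{S}_{\T_\Y}^{\calT}-\mathbf{S})\bar Z_{\T_\Omega}^{\tau}\to 0$ \emph{strongly} by ``uniform convergence on compact sets,'' but $\{\bar Z_{\T_\Omega}^{\tau}\}$ is only bounded, not relatively compact, so that step is not justified as written; it is also unnecessary, since the liminf inequality only requires \emph{weak} convergence of the states, which is obtained (as in the paper) by testing $(\mathbf{S}_{\T_\Y}^{\calT}\bar Z_{\T_\Omega}^{\tau}-\mathbf{S}\tilde\rsf, w)_{L^2(Q)}$ against a fixed $w$.

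The genuine gap is in Step 3. The pivotal claim --- that for $\gamma=1$ the tracking term $\|\tr \bar V_{\T_\Y}^{\tau}-\usf_d^{\tau}\|_{\ell^2(L^2(\Omega))}^2$ converges to $\|\tr\bar v(\orsf)-\usf_d\|_{L^2(Q)}^2$ because Theorem~\ref{TH:order_1fd} ``combined with the already-established weak convergence'' applies --- does not hold as stated. First, the error estimate of Theorem~\ref{TH:order_1fd} is an estimate at a \emph{fixed, sufficiently smooth} datum: its constant involves $\|\fsf+\rsf\|_{L^{\infty}(0,T;L^2(\Omega))}$, $\|\fsf+\rsf\|_{L^2(0,T;\Ws)}$ and $\|\usf_0\|_{\mathbb{H}^{1+s}(\Omega)}$. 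You would need to apply it along the sequence of discrete controls $\bar Z_{\T_\Omega}^{\tau}$, which are piecewise constants: for $s<\tfrac12$ they do not even belong to $\mathbb{H}^{1-s}(\Omega)$, and in any case their $\Ws$-norms are not uniformly bounded under refinement; moreover Theorem~\ref{th:gamma} is precisely meant to cover data that do not satisfy these regularity hypotheses. Second, weak convergence $\bar Z_{\T_\Omega}^{\tau}\rightharpoonup\orsf$ only yields \emph{weak} convergence of $\mathbf{S}_{\Y}(\bar Z_{\T_\Omega}^{\tau}-\orsf)$ unless one invokes compactness of the solution operator, and the consistency part $(\mathbf{S}_{\T_\Y}^{\calT}-\mathbf{S}_{\Y})\bar Z_{\T_\Omega}^{\tau}$ still has to be shown to vanish in $L^2(Q)$, which requires either convergence uniform over bounded sets of controls (not provided by the error estimates) or a compactness argument at the discrete level. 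This is exactly the missing ingredient the paper supplies: a discrete energy estimate (a variant of Lemma~\ref{LM:stab_sd}) yields uniform bounds on the discrete time increments and on $\|\tr \mathbf{S}_{\T_\Y}^{\tau}\bar Z_{\T_\Omega}^{\tau}\|_{L^2(0,T;\Hs)}$, the discrete Aubin--Lions-type result of \cite[Theorem 1]{MR2890969} then gives strong $L^2(Q)$ compactness of the discrete states, stability of \eqref{fully_adjoint} gives strong convergence of the discrete adjoints, and the two variational inequalities \eqref{op_truncated} and \eqref{eq:op_discrete} produce $\mu\|\orsf-\bar Z_{\T_\Omega}^{\tau}\|_{L^2(Q)}^2\to 0$. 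Your Radon--Riesz strategy (weak convergence plus convergence of norms) is a legitimate alternative ending, but it becomes available only \emph{after} strong $L^2(Q)$ convergence of the discrete states is established --- which is the crux your proposal assumes rather than proves.
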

\begin{proof}
Boundedness follows immediately from the fact that $\bar{Z}_{\T_\Omega}^\tau$ minimizes $F_{\T_\Y}^{\calT}$. If $z_0 \in \Zad$, then
\[
  F_{\T_\Y}^{\calT}(\bar{Z}_{\T_\Omega}^\tau) \leq F_{\T_\Y}^{\calT} (\Pi_{\T_\Omega}^\calT z_0 ) \lesssim \| z_0 \|_{L^2(Q)}^2 + \| \usf_d \|_{L^2(Q)}^2,
\]
where we used the uniform boundedness of $\Pi_{\T_\Omega}^\calT$ and $\mathbf{S}_{\T_\Y}^\calT$. This implies the existence of a (not relabeled) weakly convergent subsequence.

To show convergence of this subsequence to $\orsf$ we appeal to the theory of $\Gamma$-convergence, for which we need to verify several assumptions:

\noindent \framebox{1} \emph{Lower bound inequality}: Assume that $Z_{\T_\Omega}^\tau \rightharpoonup z$ in $L^2(Q)$. For $w \in L^2(Q)$, we have
\[
  (  \mathbf{S}_{\T_\Y}^\calT Z_{\T_\Omega}^\tau - \mathbf{S}z , w )_{L^2(Q)} = 
  (  \mathbf{S}_{\T_\Y}^\calT z - \mathbf{S}z , w )_{L^2(Q)} 
  + ( \mathbf{S}_{\T_\Y}^\calT ( Z_{\T_\Omega}^\tau - z ), w )_{L^2(Q)}
  = \textrm{I} + \textrm{II}.
\]
The pointwise convergence of $\mathbf{S}_{\T_\Y}^\calT$ to $\mathbf{S}$ shows that $\textrm{I} \to 0$, while their uniform convergence that $\textrm{II} \to 0$. In conclusion $\mathbf{S}_{\T_\Y}^\calT Z_{\T_\Omega}^\tau \rightharpoonup \mathbf{S} z$. Lower semicontinuity of the norms and $\usf_d^\tau \to \usf_d$ in $L^2(Q)$ imply
\[
  f(z) \leq \liminf F_{\T_\Y}^{\calT}(Z_{\T_\Omega}^\tau),
\]
which is what we needed to show.

\noindent \framebox{2} \emph{Existence of a recovery sequence}: Let $z \in \Zad$, then $\Pi_{\T_\Omega}^\calT z \in \Zad(\calT, \T_\Omega)$ converges strongly to $z$ in $L^2(Q)$. Consequently, $\mathbf{S}_{\T_\Y}^\calT \Pi_{\T_\Omega}^\calT z \to \mathbf{S}z$ in $L^2(Q)$ as well. The continuity of $F_{\T_\Y}^{\calT}$ then implies
\[
  f(z) \geq \limsup F_{\T_\Y}^{\calT}(\Pi_{\T_\Omega}^\calT z).
\]

\noindent \framebox{3} \emph{Equicoerciveness}: Since
\[
  F_{\T_\Y}^{\calT} (Z) \geq \frac\mu2 \| Z \|_{L^2(Q)}^2,
\]
we have, by \cite[Proposition 7.7]{MR1201152}, that the family $\{F_{\T_\Y}^{\calT}\}$ is equicoercive.

\noindent \framebox{4} Steps 1 and 2 show the $\Gamma$-convergence of the discrete reduced costs $F_{\T_\Y}^{\calT}$ to the reduced cost $f$. This implies, using \cite[Corollary 7.20]{MR1201152}, that minimizers of $F_{\T_\Y}^{\calT}$, if they converge, must do so to a minimizer of $f$. Step 3 and the uniqueness of the minimizer of the reduced cost $f$ are the conditions for the fundamental lemma of $\Gamma$-convergence \cite[Corollary 7.24]{MR1201152}. In conclusion, $\{\bar{Z}_{\T_\Omega}^\tau\}$ converges weakly to $\orsf$, the minimum of the truncated cost functional.

We conclude with the strong convergence for the case of $\gamma =1$, which follows from the a priori estimates. Namely, a basic energy estimate for scheme \eqref{fully_state}, together with a slight modification of Lemma~\ref{LM:stab_sd} imply that
\[
  \tau^{-1} \| \tr (\mathbf{S}_{\T_\Y}^\tau \bar{Z}_{\T_\Omega}^\tau 
    - \mathpzc{S}_\tau \mathbf{S}_{\T_\Y}^\tau \bar{Z}_{\T_\Omega}^\tau) \|_{L^\infty(\tau,T;L^2(\Omega))}
  + \| \tr \mathbf{S}_{\T_\Y}^\tau \bar{Z}_{\T_\Omega}^\tau \|_{L^2(0,T;\Hs)} \lesssim 1,
\]
where $\mathpzc{S}_\tau$ indicates the backward shift (in time) operator. These are, according to \cite[Theorem 1]{MR2890969}, sufficient conditions for relative compactness of the family 
\[
  \{\mathbf{S}_{\T_\Y}^\tau \bar{Z}_{\T_\Omega}^\tau\}_{\T_\Omega \in \Tr_\Omega, \tau>0}.
\]
By passing to a subsequence we get strong convergence. Denote now by $\bar{P}^{\tau}_{\T_\Omega}$ the solution to the discrete adjoint equations \eqref{fully_adjoint} with right hand side $\mathbf{S}_{\T_\Y}^\tau \bar{Z}_{\T_\Omega}^\tau - \usf_d^\tau$. Stability of \eqref{fully_adjoint} implies that the sequence $\{\bar{P}^{\tau}_{\T_\Omega}\}$ converges strongly in $L^2(Q)$.

Set $\rsf = \bar{Z}^{\tau}_{\T_\Omega}$ and $Z = \Pi^{\mathcal{T}}_{\T_{\Omega}} \orsf$ in \eqref{op_truncated} and \eqref{eq:op_discrete}, respectively. Adding the derived inequalities we arrive at
\[
 \mu \| \bar{\rsf} - \bar{Z}^{\tau}_{\T_{\Omega}}  \|_{L^2(Q)}^2 \leq (\tr(\bar{p} - \bar{P}^{\tau}_{\T_\Omega}),\bar{Z}^{\tau}_{\T_\Omega} - \orsf )_{L^2(Q)}
  + (\tr \bar{P}^{\tau}_{\T_\Omega} + \mu \bar{Z}^{\tau}_{\T_\Omega},\Pi^{\mathcal{T}}_{\T_{\Omega}} \orsf - \orsf )_{L^2(Q)} 
\]
where $\bar{p}$ and $\bar{P}^{\tau}_{\T_\Omega}$ solve \eqref{truncated_adjoint} and \eqref{fully_adjoint}, respectively. Using the definition of $\Pi^\mathcal{T}_{\T_\Omega}$ we immediately have that $(\tr \bar{P}^{\tau}_{\T_\Omega} + \mu \bar{Z}^{\tau}_{\T_\Omega},\Pi^{\mathcal{T}}_{\T_{\Omega}} \orsf - \orsf )_{L^2(Q)}  \to 0$. We handle the remainding term using the strong convergence of $\bar{P}^{\tau}_{\T_\Omega}$.

This concludes the proof.
\end{proof}

%----------%
\section{Numerical experiments}\label{s:numerics}
%----------%
Let us illustrate the performance of the fully discrete scheme proposed in \S\ref{sub:fd_control} for $\gamma=1$ and the error estimates derived in \S\ref{sub:apriori_control}.

\subsection{Implementation}
The implementation has been carried out in MATLAB$^\copyright$. 
The stiffness and mass matrices of the discrete system \eqref{fully_state} are assembled exactly, and
the respective forcing boundary term are computed by a quadrature formula which is exact for polynomials of degree $4$. 
The resulting linear system is solved by using the built-in \emph{direct solver} of MATLAB$^\copyright$.
To solve the minimization problem, we use the projected BFGS method with Armijo line search; see \cite{MR1678201}. The optimization algorithm is terminated when the $\ell^2$-norm of the projected gradient is less or equal to $10^{-9}$.

To illustrate the error estimates of \S\ref{sub:apriori_control} we need an exact solution to the fractional control problem \eqref{Jintro}--\eqref{cc}. Let $n = 2$, $\mu =1$, $\Omega = (0,1)^2$, and $\calL = -\Delta$. In this setting, the eigenpairs of $\mathcal{L}$ are:
\[
  \lambda_{k,l} = \pi^2 (k^2 + l^2), \quad 
  \varphi_{k,l}(x'_1,x'_2) = \sin(k \pi x'_1) \sin(l\pi x'_2)  
  \qquad k, l \in \mathbb{N}.
\]
Set $\ousf = e^t \sin(2 \pi x'_1) \sin(2\pi x'_2)$, which yields $\fsf = (1+\lambda_{2,2}^s) e^t \sin(2 \pi x'_1) \sin(2\pi x'_2) - \ozsf$. Set also $\opsf = -\mu (T-t) e^t \sin(2 \pi x'_1) \sin(2 \pi x'_2)$. Definition~\ref{def:fractional_adjoint} then yields
$\usf_d = \big[ 1 - \mu \{ -1 + (1 - \lambda_{2,2}^s)(T-t) \} \big] e^t \sin(2 \pi x'_1) \sin(2\pi x'_2)$.
Finally, we set $\asf = 0$ and $\bsf = 0.5$. The projection formula \eqref{eq:projformula} gives the value of $\ozsf$. This defines, for any $s\in (0,1)$, the data and solution to the optimal control problem \eqref{Jintro}--\eqref{cc}.

%----------%
\subsection{Convergence rates in space} 
%----------%

\begin{figure}
\centering
\includegraphics[width=0.44\textwidth]{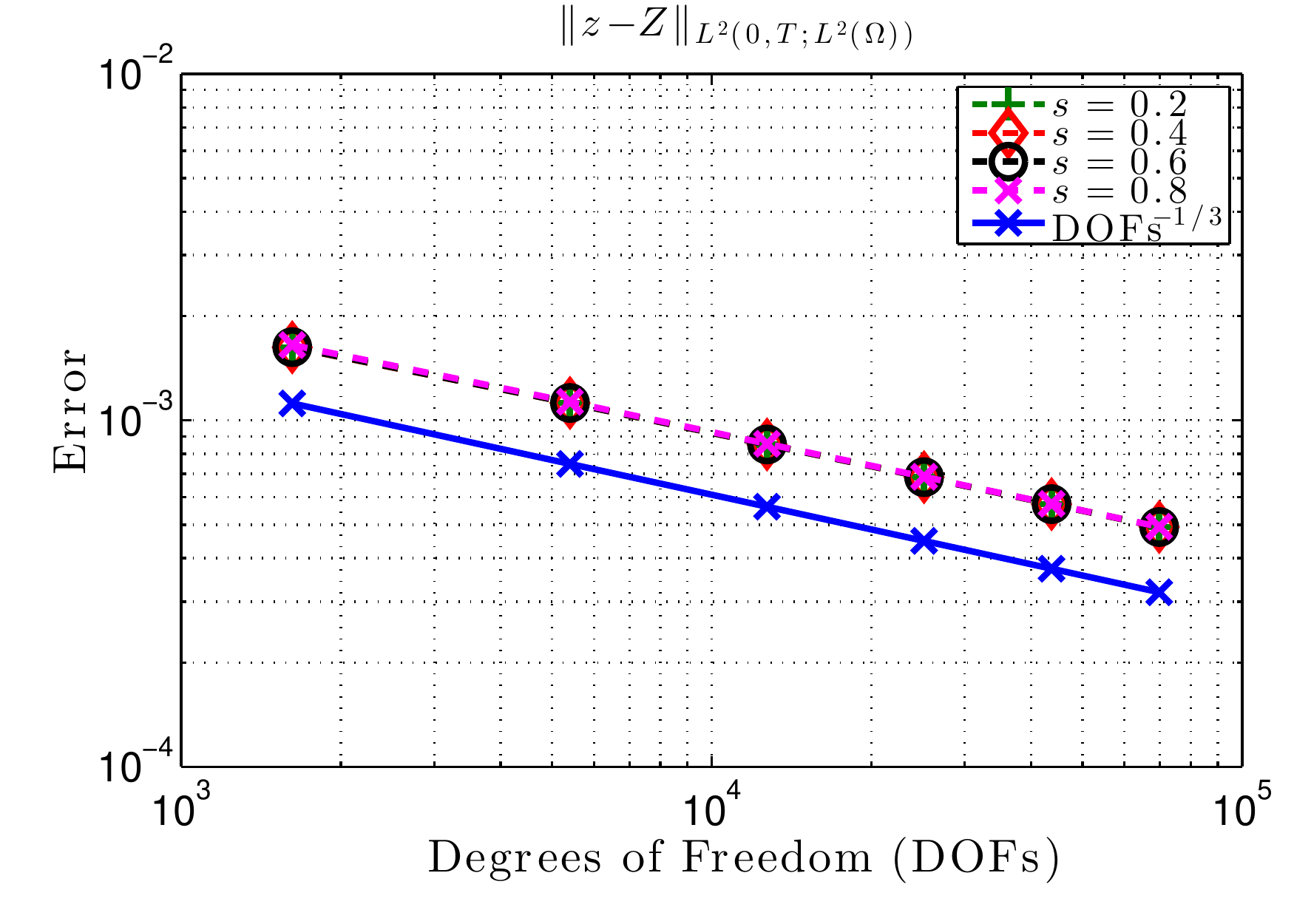} 
\includegraphics[width=0.44\textwidth]{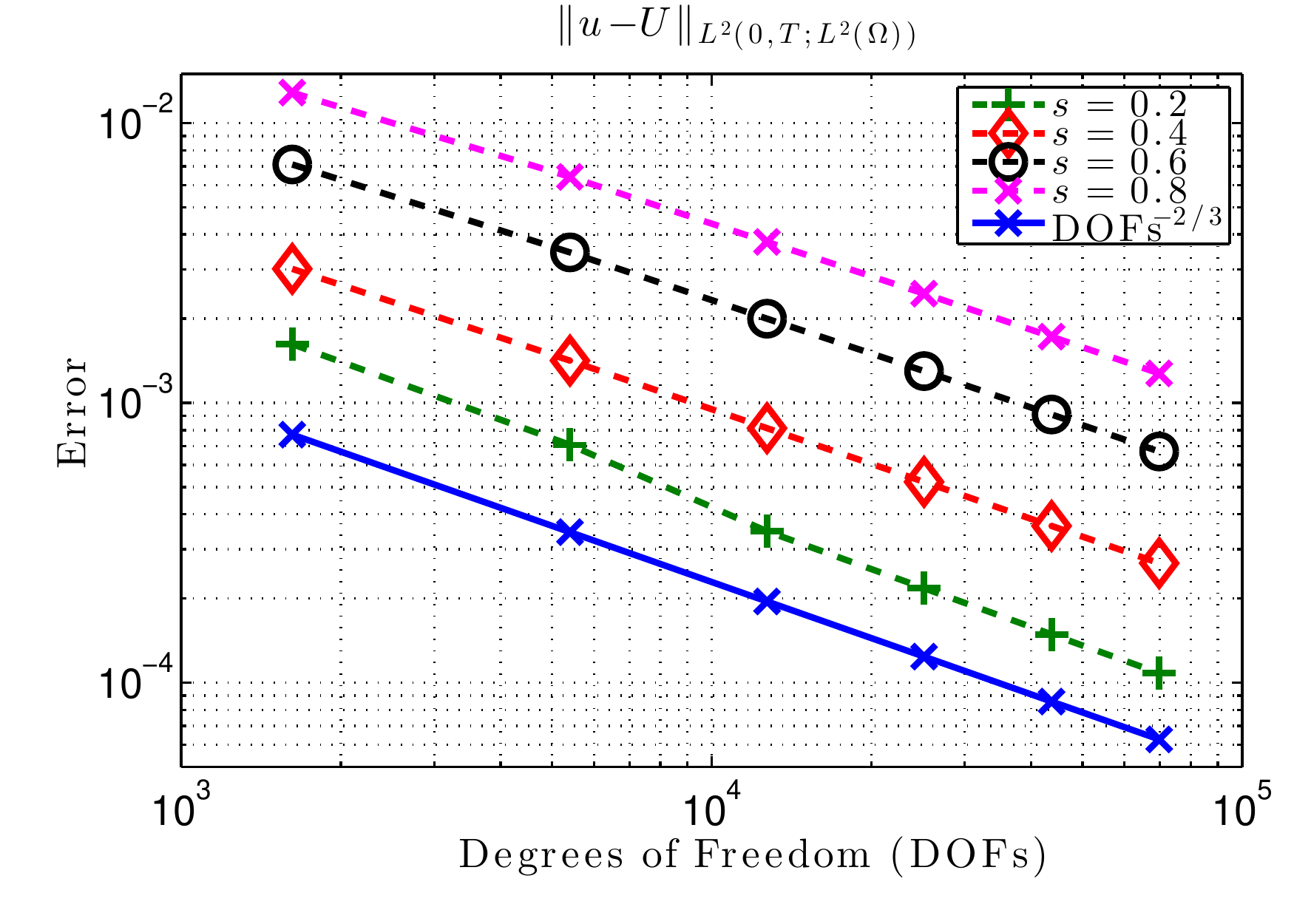}
\caption{\label{f:grad_rate}
Computational rates of convergence for the control and state on anisotropic meshes for
$n = 2$ and $s = 0.2$, $0.4$, $0.6$ and $s = 0.8$. For a fixed number of time steps, $\K = 1400$, 
the left panel shows the decrease of the $L^2(0,T;L^2(\Omega))$-control error with respect to 
$N$ and the right one that of the $L^2(0,T;L^2(\Omega))$-state error. In the control  case we recover 
the rate $N^{-1/3}$. For the state we observe a rate of $N^{-2/3}$.}
\end{figure}

Let $\K = 1400$. The asymptotic relations 
$$\|\ozsf - \bar{Z}_{\T_\Omega}^\tau \|_{L^2(0,T;L^2(\Omega))} \approx N^{-\frac{1}{3}}, 
\qquad
\|\ousf - \bar{U}_{\T_\Omega}^\tau \|_{L^2(0,T;L^2(\Omega))} \approx N^{-\frac{2}{3}} ,
$$ 
are shown in Figure~\ref{f:grad_rate}. The left panel illustrates the quasi-optimal rate of convergence for the optimal control with respect to the number of degrees of freedom $N$ for all choices of the parameter $s$ considered. As noted in \cite{AO,NOS3}, in order to recover optimality, the state and adjoint equations must be discretized with the anisotropic refinement, in the extended dimension, dictated by \eqref{graded_mesh}. 

From Figure~\ref{f:grad_rate} we can also observe that the approximate optimal state converges with a rate $N^{-\frac{2}{3}}$. This rate is not discussed in this paper and will be part of a future work. The theoretical rate of convergence for the approximation of the optimal state is dictated by the results Corollary~\ref{CR:order_1fd}:  $N^{-\frac{1+s}{3}}$, which, in fact, is a consequence of the error estimate derived in \cite[Proposition 4.7]{NOS3}.
%$N^{-\frac{1+s}{3}}$, \ie the same as in Corollary~\ref{CR:order_1fd}.

\subsection{Convergence rates in time}
Let $N = 927828$. The asymptotic relation 
$$\|\ozsf - \bar{Z}_{\T_\Omega}^\tau \|_{L^2(0,T;L^2(\Omega))} \approx \K^{-1}, 
$$
is shown in Figure~\ref{f:time_refine_rate} and illustrates the optimal decay rate 
in the control with respect to $\K$, for all choices of the 
parameter $s$ considered. 
\begin{figure}
\centering
\includegraphics[width=0.44\textwidth]{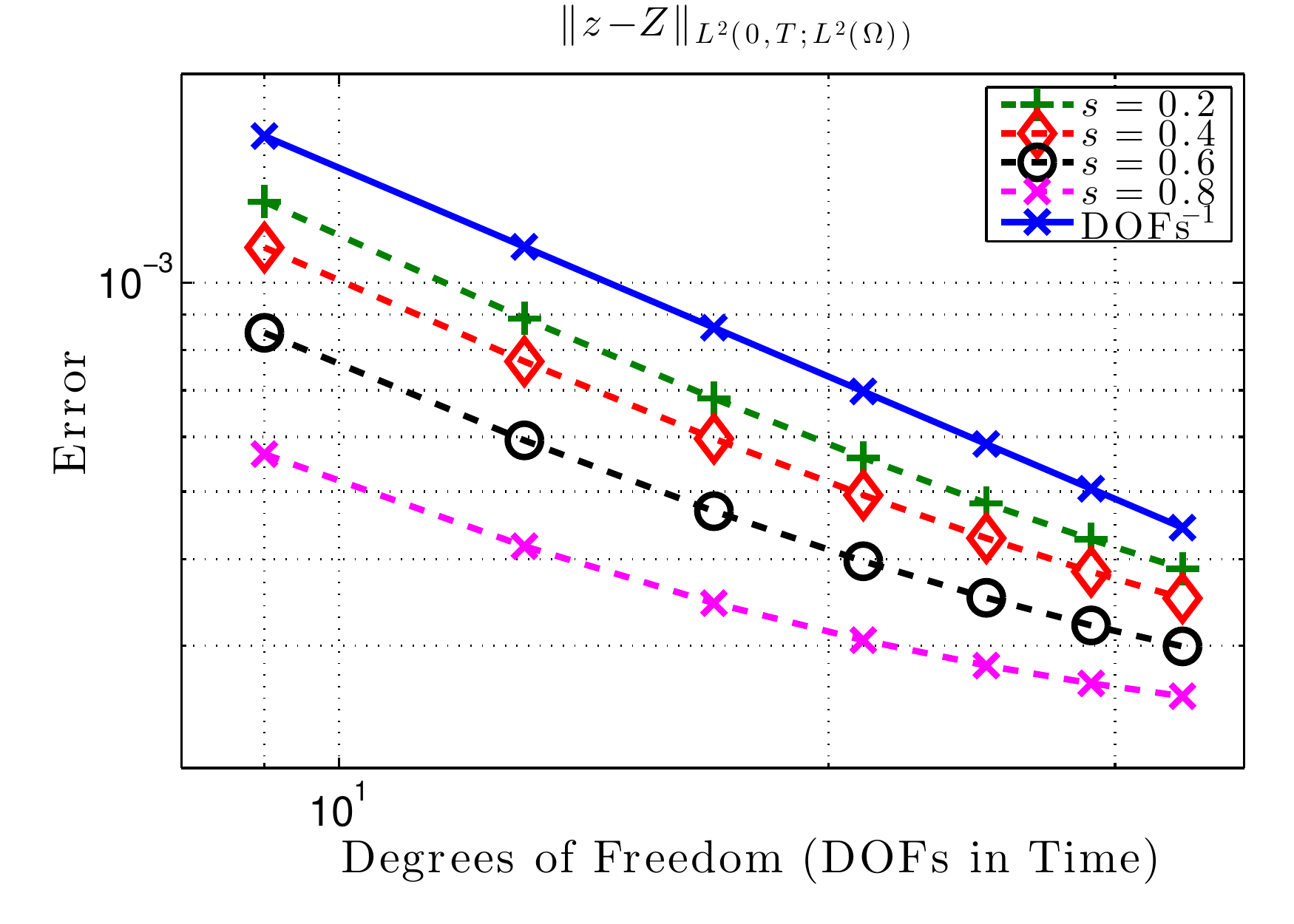} 
\caption{\label{f:time_refine_rate}
Computational rates of convergence for the control on anisotropic meshes for
$n = 2$ and $s = 0.2$, $0.4$, $0.6$ and $s = 0.8$. For a fixed number degrees of freedom in space, $N = 927828$, 
the figure shows the decrease of the $L^2(0,T;L^2(\Omega))$-control error with respect to $\K$. In this case we recover the rate $\K^{-1}$.}
\end{figure}

\bibliographystyle{plain}
\bibliography{biblio}

\end{document}